\newtheorem{remark}{Remark}[section]
\newtheorem{assumption}{Assumption}[section]
\newtheorem{example}{Example}[section]
\newcommand{\fy}{\varphi}
\renewcommand{\d}{{\rm d}}
\def\Om{\Omega}
\def\II{(\Om)}
\def\Uad{\mathcal{A}}
\title{Error Analysis of Finite Element Approximations of Diffusion Coefficient Identification for Elliptic and Parabolic Problems
\thanks{The work of B. Jin is supported by UK EPSRC grant EP/T000864/1, and
the research of Z. Zhou is supported by Hong Kong RGC grant (No. 15304420).}}
\author{
Bangti Jin\thanks{Department of Computer Science, University College London, Gower Street, London, WC1E 6BT, UK.
 E-mail address: b.jin@ucl.ac.uk, bangti.jin@gmail.com}
\and
Zhi Zhou\thanks{Department of Applied Mathematics, The Hong Kong Polytechnic University, Kowloon, Hong Kong. E-mail address: zhizhou@polyu.edu.hk}
}
\begin{document}

\maketitle

\begin{abstract}
In this work, we present a novel error analysis for recovering a spatially dependent diffusion coefficient in
an elliptic or parabolic problem. It is based on the standard regularized output least-squares
formulation with an $H^1(\Omega)$ seminorm penalty, and then discretized using the Galerkin finite element
method with conforming piecewise linear finite elements for both state and coefficient, and backward Euler in
time in the parabolic case. We derive \textit{a priori} weighted $L^2(\Omega)$ estimates where the constants
depend only on the given problem data for both elliptic and parabolic cases. Further, these estimates also
allow deriving standard $L^2(\Omega)$ error estimates, under a positivity condition that can be verified
for certain problem data. Numerical experiments are provided to complement the error analysis.
\end{abstract}

\begin{keywords}
parameter identification, finite element approximation, error estimate, Tikhonov regularization
\end{keywords}


\section{Introduction}\label{sec:intro}
This work is concerned with error analysis of Galerkin approximations of regularized formulations for recovering a
spatially-dependent diffusion coefficient $q$ for elliptic and parabolic problems. Let $\Omega\subset\mathbb{R}^d$
($d=1,2,3$) be a convex polyhedral domain with a boundary $\partial\Omega$. Consider the following elliptic boundary value problem:
\begin{equation}\label{eqn:pde}
 \left\{\begin{aligned}
   -\nabla\cdot(q\nabla u)  &= f,&&  \mbox{in }\Omega ,\\
    u &=  0,&&\mbox{on }\partial\Omega,
   \end{aligned}\right.
\end{equation}
where the function $f$ denotes a given source term. The solution to problem \eqref{eqn:pde} is denoted by $u(q)$, to indicate its
dependence on the coefficient $q$. The inverse problem is to recover the exact diffusion coefficient $q^\dag(x)$
from the pointwise observation $z^\delta$, with a noise level $\delta$, i.e.,
\begin{equation}\label{eqn:noise-ell}
  \|  z^\delta - u(q^\dag)  \|_{L^2(\Omega)} \le \delta.
\end{equation}
{Throughout, the diffusion coefficient $q$ is sought within the admissible set $\Uad$, defined by
\begin{equation}\label{eqn:admset}
    \Uad=\{ q\in H^1(\Omega):~~c_0\le q \le c_1 ~~\text{a.e. in}~~ \Omega\},
\end{equation}
for some positive constants $c_0, c_1>0$.}

Problem \eqref{eqn:pde} is the steady state of the following parabolic initial-boundary value problem
\begin{equation}\label{eqn:parabolic}
    \left\{\begin{aligned} \partial_tu -\nabla\cdot(q\nabla u)  &= f,&&  \mbox{in }\Omega\times(0,T],\\
    u(0) &= u_0, && \mbox{in }\Omega,\\
u &=0,&& \mbox{on }\partial\Omega\times(0,T],
   \end{aligned} \right.
\end{equation}
where $T>0$ is the final time. The functions $f$ and $u_0$ are the given source term and initial condition,
respectively. The corresponding inverse problem is to recover the spatially dependent diffusion coefficient
$q^\dag(x)$ from the distributed observation $z^\delta$ over $\Omega\times(T-\sigma,T)$ (for some measurement
window $0\leq \sigma<T$), with a noise level $\delta$, i.e.,
\begin{equation}\label{eqn:noise-p0}
  \|z^\delta -u(q^\dag)\|_{L^2(T-\sigma,T;L^2(\Omega))} \leq \delta.
\end{equation}
The elliptic problem \eqref{eqn:pde} and parabolic problem \eqref{eqn:parabolic} describe many important
physical processes, and the related inverse problems are exemplary for parameter identifications for
PDEs (see the monographs \cite{BanksKunisch:1989,Isakov:2006} for overviews).
For example, \eqref{eqn:pde} is often used to model the behaviour of a confined inhomogenous aquifer, where $u$
represents the piezometric head, $f$ is the recharge and $q$ hydraulic conductivity (or transmissivity
in the two-dimensional case); see \cite{FrindPinder:1973,Yeh:1986}
for parameter identifications in hydrology. See also \cite{BalUhlmann:2013} for
related coupled-physics inverse problems arising in medical imaging.

Due to the ill-posed nature of inverse problems, regularization, especially Tikhonov regularization, is customarily
employed for constructing numerical approximations (see, e.g., \cite{EnglHankeNeubauer:1996,ItoJin:2015}).
Commonly used stabilizing terms include $H^1(\Omega)$ and total variation {semi-norms}, which are suitable for
recovering smooth and nonsmooth diffusion coefficients, respectively. The well-posedness and convergence (with
respect to the noise level) was studied \cite{KravarisSeinfeld:1985,Gutman:1990,Acar:1993,ChenZou:1999}, and further,
convergence rates (with respect to $\delta$) were derived under various ``source'' conditions, e.g., variational inequalities
or conditional stability estimates \cite{KeungZou:1998}. In practice, the regularized formulations are further discretized, often
with the Galerkin finite element method (FEM), due to its flexibility with domain geometry and low-regularity problem data.
The discretization step necessarily introduces additional errors, which impacts the reconstruction quality. Several
studies  \cite{Gutman:1990,KeungZou:1998,Zou:1998} have analyzed the convergence with respect to the discretization
parameter(s), e.g., mesh size $h$ and time stepsize $\tau$, but without error bounds.

So far, only very few results were available on error bounds of approximate solutions. This is attributed to
strong nonlinearity of the forward (parameter-to-state) map, low regularity of noisy data $z^\delta$
and delicate interplay between different parameters (noise level, regularization parameter and discretization
parameters). Falk \cite{Falk:1983} analyzed a Galerkin discretization of the standard output least-squares
formulation for the elliptic inverse problem (with a Neumann boundary
condition), and derived a rate $O(h^r + h^{-2}\delta)$ in the $L^2(\Omega)$ norm, where $r$ is the polynomial degree of the
finite element space and $h$ is the mesh size. This result is derived by assuming sufficiently high regularity
of the coefficient $q^\dag$, and a certain structural condition on the gradient field;
see details in Remark \ref{rem:cond-ellip}. In the elliptic case, there are also several results for other discrete
formulations: \cite{Richter:1981} for upwind finite difference approximation of a transport equation (without noise),
\cite{Karkkainen:1997,AlJamalGockenbach:2012} for the equation error approach (EEA) (the fidelity in the negative
$H^1(\Omega)$ norm, and $H^1(\Omega)$ penalty) and \cite{KohnLowe:1988} for the EEA in a mixed
formulation. However, no regularization was taken into account in the works \cite{Richter:1981,Falk:1983,KohnLowe:1988},
and thus the corresponding discrete formulations can suffer from numerical instability. The EEA works only
with the case $z^\delta\in H^1(\Omega)$, and so is the error analysis. For the regularized
problem, Wang and Zou \cite{WangZou:2010} derived first convergence rates (in weighted norms) for both elliptic
and parabolic cases ({equipped} with a zero Neumann boundary condition) with either pointwise or
gradient observations. In the elliptic case, the analysis employs
the test function $\fy=\frac{q^\dag-q_h^*}{q^\dag}e^{-2\alpha_0{c_0^{-1}}u(q^\dag)}$ (with $q_h^*$
being a discrete minimizer, $\alpha_0$ a parameter in the structural condition, cf. Remark \ref{rem:cond-ellip}
and $c_0$ lower bound on $q^\dag$),
and assumes regularity on both state $u$ and coefficient $q^\dag$; and in the parabolic case, it requires
a more involved test function. However, no estimate in the usual $L^2(\Omega)$ was given, and further, the analysis
in the parabolic case requires the measurement in the entire time interval $(0,T)$. Deckelnick and Hinze
\cite{DeckelnickHinze:2012} studied the elliptic inverse problem of recovering matrix valued coefficients using
the $L^2(\Omega)$ penalty in the $H$-convergence framework, and in the two-dimensional case, proved an
$L^2(\Omega)$ estimate $O(\delta^\frac12)$, where the coefficient $q$ is discretized using variational discretization.
The estimate was derived under a projected source condition.

In this work, we present a novel approach to derive convergence rates for the standard regularized
output least-squares formulation discretized by Galerkin FEM. The approach employs the test function
$\fy=\frac{q^\dag-q_h^*}{q^\dag}u(q^\dag)$ for both elliptic and parabolic cases, inspired by the
recent work \cite{Bonito:2017} {(on the H\"{o}lder stability of the elliptic inverse problems).}
It enables us to derive convergence rates in a new weighted $L^2(\Omega)$
norm for both elliptic and parabolic cases, {extending the prior result for
the time-fractional diffusion equation \cite{JinZhou:sicon2019}.} Further, we derive estimates in the usual
$L^2(\Omega)$ norm, under suitable positivity conditions, which hold for a class of problems data.
In the parabolic case, we relax the restriction in \cite{WangZou:2010} {(and also \cite{JinZhou:sicon2019})} on the time horizon for
the measurement from $[0,T]$ to a subinterval $[T-\sigma,T]$ for any $0\leq\sigma<T$ and the regularity
assumption on the true coefficient $q^\dag$ from $W^{2,\infty}(\Omega)$ to $W^{1,\infty}(\Omega)\cap H^2(\Omega)$.
This former is achieved by a new weighting in the time direction, and
the latter by discrete maximal $L^p$ regularity for parabolic problems. In the course of error analysis, no regularity assumption
is made on the state $u$ and {no additional temporal} regularity on the observation $z^\delta$ than $L^2(T-\sigma,T;L^2(\Omega))$,
and furthermore, no source type condition is imposed, as usually
done for parameter identifications \cite{EnglKunischNeubauer:1989,KeungZou:1998}. To the best of our knowledge, they are
first error estimates of the kind for the concerned inverse conductivity problems.

The rest of the paper is organized as follows. In Section \ref{sec:fem}, we describe useful facts about
the Galerkin FEM. Then in Sections \ref{sec:elliptic} and \ref{sec:parabolic}, we describe and
analyze the finite element approximations for the elliptic and parabolic inverse problems, respectively.
Last, in Section \ref{sec:numer}, we present numerical
results to complement the analysis. We conclude with useful notation. For any $k\geq 0$ and $p\geq1$, the
space $W^{k,p}(\Omega)$ denotes the standard Sobolev spaces of the $k$th order, and we write $H^k(\Omega)$,
when $p=2$ \cite{AdamsFournier:2003}. The notation $(\cdot,\cdot)$ denotes the $L^2(\Omega)$ inner product.
For the analysis of parabolic problems, we use the Bochner spaces $W^{k,p}(0,T;B)$ etc, with $B$ being a
Banach space. Throughout, the notation $c$, with or without a subscript, denotes a generic constant which may
change at each occurrence, but it is always independent of the following parameters: regularization
parameter $\gamma$, mesh size $h$, time stepsize $\tau$ and noise level $\delta$.

\section{Finite element approximations}\label{sec:fem}
Now we recall briefly the Galerkin FEM approximation.
Let $\mathcal{T}_h$ be a shape regular quasi-uniform triangulation of the
domain $\Omega $ into $d$-simplexes, denoted by $T$, with a mesh size $h$. Over $\mathcal{T}_h$,
we define a continuous piecewise linear finite element space $X_h$ by
\begin{equation*}
  X_h= \left\{v_h\in H_0^1(\Omega):\ v_h|_T \mbox{ is a linear function}\ \forall\, T \in \mathcal{T}_h\right\},
\end{equation*}
and similarly the space $V_h$ by
\begin{equation*}
  V_h= \left\{v_h\in H^1(\Omega):\ v_h|_T \mbox{ is a linear function}\ \forall\, T \in \mathcal{T}_h\right\}.
\end{equation*}
The spaces $X_h$ and $V_h$ will be employed to approximate the state $u$ and the diffusion coefficient
$q$, respectively. First, we introduce useful operators on $X_h$ and $V_h$. We define the $L^2(\Omega)$ projection $P_h:L^2(\Omega)\to X_h$ by
\begin{equation*}
     (P_h \varphi,\chi) =(\varphi,\chi) , \quad \forall\, \chi\in X_h.
\end{equation*}
Note that the operator $P_h$ satisfies the following error estimates \cite[p. 32]{Thomee:2006}: for any $s\in[1,2]$
\begin{equation}\label{eqn:proj-L2-error}
  \|P_h\varphi-\varphi\|_{L^2(\Omega)} + h\|\nabla(P_h\varphi-\varphi)\|_{L^2(\Omega)}\leq h^s\|\varphi\|_{H^s(\Omega)},\quad \forall \varphi\in H^s(\Omega)\cap H_0^1(\Omega).
\end{equation}
Let $\mathcal{I}_h$ be the Lagrange interpolation operator associated with the finite element space $V_h$. Then it satisfies
the following error estimates for $s=1,2$ and $1 \le p\le \infty$ {(with $sp>d$)} \cite[Theorem 1.103]{ern-guermond}:
\begin{align}
  \|v-\mathcal{I}_hv\|_{L^p(\Omega)} + h\|v-\mathcal{I}_hv\|_{W^{1,p}(\Omega)} & \leq ch^s \|v\|_{W^{s,p}(\Omega)}, \quad \forall v\in W^{s,p}(\Omega).\label{eqn:int-err-inf}
\end{align}
Further, for any $q$, we define a discrete operator $A_h(q):X_h\to X_h$ by
\begin{equation}\label{eqn:Ah}
  (A_h(q)v_h,\chi)=(q\nabla v_h,\nabla \chi),\quad \forall v_h,\chi\in X_h.
\end{equation}

\section{Elliptic case}\label{sec:elliptic}
In this section, we derive error estimates for the elliptic inverse problem.

\subsection{Finite element approximation}

First we describe the regularized formulation and its finite element approximation. To recover
the diffusion coefficient $q$ in the elliptic system \eqref{eqn:pde}, we employ the standard
output least-squares formulation with an $H^1(\Omega)$ seminorm penalty:
\begin{equation}\label{eqn:ob-elliptic}
    \min_{q \in \Uad} {J_\gamma(q)}=\frac12 \|u( q) - z^\delta\|_{L^2(\Omega)}^2  + \frac\gamma2\|\nabla q \|_{L^2(\Omega) }^2,
\end{equation}
where {the admissible set $\Uad$ is defined by \eqref{eqn:admset}} and $u(q)\in H_0^1(\Omega)$ satisfies the variational problem
\begin{equation}\label{eqn:var-elliptic}
 (q\nabla u(q),\nabla v) = (f,v),\quad \forall  v\in H_0^1(\Omega).
\end{equation}

The $H^1(\Omega)$ seminorm penalty is suitable for recovering a smooth diffusion coefficient. The scalar
$\gamma>0$ is the regularization parameter, controlling the strength of the penalty \cite{ItoJin:2015}.
Using standard argument in calculus of variation, it can be verified that for every $\gamma>0$, problem \eqref{eqn:ob-elliptic}--\eqref{eqn:var-elliptic} has at least one
global minimizer $q^*$, and further the sequence of minimizers converges subsequentially in $H^1(\Omega)$
to a minimum seminorm solution as the noise level $\delta$ tends to zero, provided that $\gamma$ is
chosen appropriately in accordance with $\delta$, i.e., $\lim_{\delta\to0^+}\gamma(\delta)^{-1}\delta^2
=\lim_{\delta\to0^+}\gamma(\delta)=0$; see, e.g., \cite{EnglKunischNeubauer:1989,ItoJin:2015}.
{In this work, we focus on the \textit{a priori} choice $\gamma\sim \delta^2$ (cf. Remark
\ref{rmk:rate-ell} below), which is generally sufficient to ensure the noise level condition
$\eqref{eqn:noise-ell}$. In practice, one may also employ \textit{a posteriori} rules. One popular choice
is the discrepancy principle \cite{Morozov:1966,ItoJin:2015}: given some $\tau>1$, it determines the
largest $\gamma>0$ such that
\begin{equation*}
  \|u(q^*) -z^\delta\|_{L^2(\Omega)}\leq \tau \delta,
\end{equation*}
in line with the \textit{a priori} knowledge \eqref{eqn:noise-ell}.}

Now we can formulate the finite element discretization of problem \eqref{eqn:ob-elliptic}--\eqref{eqn:var-elliptic}:
\begin{equation}\label{eqn:ob-disc-elliptic}
    \min_{q_h \in \Uad_h} J_{\gamma,h}(q_h)=\frac12  \|u_h(q_h) - z ^\delta\|_{L^2(\Omega)}^2  + \frac\gamma2\|\nabla q_h \|_{L^2(\Omega) }^2,
\end{equation}
subject to $q_h\in\Uad_h$ and $u_h(q_h)$ satisfying
\begin{align}\label{eqn:fem-elliptic}
(q_h   \nabla u_h(q_h), \nabla v_h) = (f,v_h),\quad \forall v_h \in X_h.
\end{align}
 The discrete admissible set $\Uad_h$ is taken to be
\begin{equation}\label{eqn:admset-h}
    \Uad_h:=\Uad \cap V_h=\{ q_h\in V_h:~~c_0\le q_h(x) \le c_1 \text{ in }\Omega  \}.
\end{equation}
For the discrete problem \eqref{eqn:ob-disc-elliptic}-\eqref{eqn:fem-elliptic},
the following existence and convergence results hold. For any fixed $h>0$, there
exists at least one minimizer $q_h^*$ to problem \eqref{eqn:ob-disc-elliptic}-\eqref{eqn:fem-elliptic}.
Further, the sequence of minimizers $\{q_h^*\}_{h>0}$ contains a subsequence that
converges in $H^1(\Omega)$ to a minimizer to problem \eqref{eqn:ob-elliptic}--\eqref{eqn:var-elliptic}.
The proof follows by a standard argument from calculus of variation and the density
of the space $V_h$ in $H^1(\Omega)$, thus it is omitted; see \cite{Zou:1998,HinzeKaltenbacher:2018} 
for related analysis.

\subsection{Error estimates}

Now we establish an error estimate of the numerical approximation \eqref{eqn:ob-disc-elliptic}-\eqref{eqn:fem-elliptic}
with respect to the exact conductivity $q^\dag$. We shall make the following assumption on the problem data.
\begin{assumption}\label{ass:elliptic}
The exact diffusion coefficient $q^\dag$ and source term $f$ satisfy
{$q^\dag \in   H^2(\Omega) \cap W^{1,\infty}(\Omega) \cap \mathcal A$} and $f \in L^\infty(\Omega)$.
\end{assumption}

Under Assumption \ref{ass:elliptic}, there holds
(see \cite[Lemma 2.1]{LiSun:2017} and \cite[Theorems 3.3 and 3.4]{GruterWidman:1982})
\begin{equation}\label{eqn:reg-u}
   u\in H_0^1(\Omega)\cap H^2(\Omega)\cap W^{1,\infty}(\Omega).
\end{equation}
{Note that this regularity result requires only $W^{1,\infty}(\Omega)\cap \mathcal{A}$.}

The following \textit{a priori} estimate holds. The proof is identical with that for \cite[Lemma 5.2]{WangZou:2010},
but with the Dirichlet boundary condition in place of the Neumann one (see also Lemma \ref{lem:est-01} for
related argument). {The proof requires the estimate $\|q - \mathcal{I}_h q\|_{L^2(\Omega)}\le ch^2$,
due to the use of $u_h(\mathcal{I}_hq)$ as an intermediate solution, and thus the condition $q^\dag\in H^2(\Omega)$
in Assumption \ref{ass:elliptic}. See the proof in Lemma A.1 and \cite[Lemma 5.2]{WangZou:2010} for details.}
\begin{lemma}\label{lem:ellip-est-1}
Let $q^\dag\in \mathcal{A}$ be the exact diffusion coefficient, $u(q^\dag)$ the solution to problem \eqref{eqn:var-elliptic}, and Assumption
\ref{ass:elliptic} be fulfilled. Let $(q_h^*,u_h(q_h^*)) \in \Uad_h
\times X_h$ be a solution of problem \eqref{eqn:ob-disc-elliptic}--\eqref{eqn:fem-elliptic}. Then there holds
$$ \| u_h(q_h^*) - u(q^\dag)  \|_{L^2(\Omega)} + \gamma^\frac12 \| \nabla q_h^* \|_{L^2(\Omega)} \le c(h^2 +\delta+\gamma^{\frac12}).  $$
\end{lemma}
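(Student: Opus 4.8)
The plan is to compare the discrete minimizer $q_h^*$ against the Lagrange interpolant $\mathcal{I}_h q^\dag\in V_h$ of the exact coefficient, and then to control the resulting forward error $\|u_h(\mathcal{I}_h q^\dag)-u(q^\dag)\|_{L^2(\Omega)}$ at the rate $O(h^2)$. First note that $\mathcal{I}_h q^\dag$ is a feasible discrete coefficient: on each simplex $\mathcal{I}_h q^\dag$ is the affine interpolant of the nodal values $q^\dag(x_i)\in[c_0,c_1]$ (well defined since $q^\dag\in H^2(\Omega)\hookrightarrow C(\overline\Omega)$ for $d\le 3$), hence $c_0\le\mathcal{I}_h q^\dag\le c_1$ pointwise, so $\mathcal{I}_h q^\dag\in\Uad_h$; moreover $\|\mathcal{I}_h q^\dag\|_{W^{1,\infty}(\Omega)}\le c\|q^\dag\|_{W^{1,\infty}(\Omega)}$ uniformly in $h$, and by \eqref{eqn:int-err-inf} $\|\nabla\mathcal{I}_h q^\dag\|_{L^2(\Omega)}\le\|\nabla q^\dag\|_{L^2(\Omega)}+ch\|q^\dag\|_{H^2(\Omega)}\le c$. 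Since $q_h^*$ minimizes $J_{\gamma,h}$ over $\Uad_h$ and $\mathcal{I}_h q^\dag\in\Uad_h$, we have $J_{\gamma,h}(q_h^*)\le J_{\gamma,h}(\mathcal{I}_h q^\dag)$, i.e.,
\begin{equation*}
 \frac12\|u_h(q_h^*)-z^\delta\|_{L^2(\Omega)}^2+\frac\gamma2\|\nabla q_h^*\|_{L^2(\Omega)}^2\le\frac12\|u_h(\mathcal{I}_h q^\dag)-z^\delta\|_{L^2(\Omega)}^2+\frac\gamma2\|\nabla\mathcal{I}_h q^\dag\|_{L^2(\Omega)}^2 .
\end{equation*}

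The main task is the bound $\|u_h(\mathcal{I}_h q^\dag)-u(q^\dag)\|_{L^2(\Omega)}\le ch^2$, which I would split as $\|u_h(\mathcal{I}_h q^\dag)-u(\mathcal{I}_h q^\dag)\|_{L^2(\Omega)}+\|u(\mathcal{I}_h q^\dag)-u(q^\dag)\|_{L^2(\Omega)}$. Since $\mathcal{I}_h q^\dag\in\Uad$ is bounded in $W^{1,\infty}(\Omega)$ uniformly in $h$ and $\Omega$ is convex, the regularity \eqref{eqn:reg-u} applies to $u(\mathcal{I}_h q^\dag)$ with an $h$-independent constant, and $u_h(\mathcal{I}_h q^\dag)$ is precisely the conforming Galerkin approximation of $u(\mathcal{I}_h q^\dag)$ in $X_h$; hence the standard Aubin--Nitsche duality argument (the adjoint problem is likewise $H^2(\Omega)$-regular uniformly in $h$) yields $\|u_h(\mathcal{I}_h q^\dag)-u(\mathcal{I}_h q^\dag)\|_{L^2(\Omega)}\le ch^2\|u(\mathcal{I}_h q^\dag)\|_{H^2(\Omega)}\le ch^2$. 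For the second term, subtracting the weak forms of $u(\mathcal{I}_h q^\dag)$ and $u(q^\dag)$ gives, with $e:=u(\mathcal{I}_h q^\dag)-u(q^\dag)\in H_0^1(\Omega)$,
\begin{equation*}
 (\mathcal{I}_h q^\dag\nabla e,\nabla v)=((q^\dag-\mathcal{I}_h q^\dag)\nabla u(q^\dag),\nabla v),\qquad\forall\,v\in H_0^1(\Omega).
\end{equation*}
I would estimate $\|e\|_{L^2(\Omega)}$ by duality: let $\psi\in H_0^1(\Omega)\cap H^2(\Omega)$ solve $-\nabla\cdot(\mathcal{I}_h q^\dag\nabla\psi)=e$, with $\|\psi\|_{H^2(\Omega)}\le c\|e\|_{L^2(\Omega)}$ (again uniformly in $h$), so that
\begin{equation*}
 \|e\|_{L^2(\Omega)}^2=(\mathcal{I}_h q^\dag\nabla e,\nabla\psi)=\big(q^\dag-\mathcal{I}_h q^\dag,\,\nabla u(q^\dag)\cdot\nabla\psi\big)\le\|q^\dag-\mathcal{I}_h q^\dag\|_{L^2(\Omega)}\|\nabla u(q^\dag)\|_{L^\infty(\Omega)}\|\nabla\psi\|_{L^2(\Omega)} .
\end{equation*}
By \eqref{eqn:reg-u} one has $\|\nabla u(q^\dag)\|_{L^\infty(\Omega)}\le c$, and by \eqref{eqn:int-err-inf} with $s=2$, $p=2$ (note $sp=4>d$) $\|q^\dag-\mathcal{I}_h q^\dag\|_{L^2(\Omega)}\le ch^2\|q^\dag\|_{H^2(\Omega)}$; hence $\|e\|_{L^2(\Omega)}\le ch^2$ and, combining, $\|u_h(\mathcal{I}_h q^\dag)-u(q^\dag)\|_{L^2(\Omega)}\le ch^2$. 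This step is the crux, and it is exactly here that the assumptions $q^\dag\in H^2(\Omega)$ (through the $O(h^2)$ interpolation bound) and $u(q^\dag)\in W^{1,\infty}(\Omega)$ enter; the naive energy estimate based only on $\|q^\dag-\mathcal{I}_h q^\dag\|_{L^\infty(\Omega)}=O(h)$ would give just $O(h)$, which is insufficient.

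Finally I would assemble the pieces. From the noise condition \eqref{eqn:noise-ell}, $\|u_h(\mathcal{I}_h q^\dag)-z^\delta\|_{L^2(\Omega)}\le\|u_h(\mathcal{I}_h q^\dag)-u(q^\dag)\|_{L^2(\Omega)}+\delta\le c(h^2+\delta)$, and together with $\|\nabla\mathcal{I}_h q^\dag\|_{L^2(\Omega)}\le c$ the minimality inequality gives
\begin{equation*}
 \|u_h(q_h^*)-z^\delta\|_{L^2(\Omega)}^2+\gamma\|\nabla q_h^*\|_{L^2(\Omega)}^2\le c(h^4+\delta^2+\gamma),
\end{equation*}
so that $\|u_h(q_h^*)-z^\delta\|_{L^2(\Omega)}\le c(h^2+\delta+\gamma^{1/2})$ and $\gamma^{1/2}\|\nabla q_h^*\|_{L^2(\Omega)}\le c(h^2+\delta+\gamma^{1/2})$. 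One more use of \eqref{eqn:noise-ell} replaces $z^\delta$ by $u(q^\dag)$, namely $\|u_h(q_h^*)-u(q^\dag)\|_{L^2(\Omega)}\le\|u_h(q_h^*)-z^\delta\|_{L^2(\Omega)}+\delta\le c(h^2+\delta+\gamma^{1/2})$, and adding the two bounds gives the claim. The only genuine obstacle is the $O(h^2)$ coefficient-perturbation estimate above; the remainder is a routine Tikhonov minimality comparison.
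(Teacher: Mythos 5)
Your proof is correct and follows essentially the same route as the paper: compare the minimizer with $\mathcal{I}_h q^\dag$ in the Tikhonov functional and establish $\|u_h(\mathcal{I}_h q^\dag)-u(q^\dag)\|_{L^2(\Omega)}\le ch^2$ via a duality argument that exploits $\|q^\dag-\mathcal{I}_h q^\dag\|_{L^2(\Omega)}=O(h^2)$ together with $\|\nabla u(q^\dag)\|_{L^\infty(\Omega)}\le c$. The only (harmless) difference is that you treat the coefficient perturbation at the continuous level, comparing $u(\mathcal{I}_h q^\dag)$ with $u(q^\dag)$ --- which requires the $H^2(\Omega)$ regularity constant for the $\mathcal{I}_h q^\dag$-problem to be uniform in $h$, as you correctly note --- whereas the paper's Lemma A.1 compares the two discrete solutions $u_h(q^\dag)$ and $u_h(\mathcal{I}_h q^\dag)$ directly and then invokes the standard FEM error bound for the fixed coefficient $q^\dag$.
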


Now we state the main result of this section, i.e., a weighted error estimate for the Galerkin approximation $q_h^*$.
The positivity of the weight $q^\dag | \nabla u(q^\dag)  |^2 + fu(q^\dag)$ will be discussed below.
\begin{theorem}\label{thm:ellip-1}
Let Assumption \ref{ass:elliptic} be fulfilled. Let $q^\dag$ be the exact diffusion coefficient, $ u(q^\dag)$
the solution to problem \eqref{eqn:var-elliptic}, and $q_h^* \in \Uad_h$ a minimizer of
problem \eqref{eqn:ob-disc-elliptic}-\eqref{eqn:fem-elliptic}. Then with $\eta=h^2 +\delta+\gamma^{\frac12}$, there holds
\begin{equation*}
  \int_\Omega (q^\dag-q_h^*)^2 \big(  q^\dag | \nabla u(q^\dag)  |^2 + fu(q^\dag) \big) \,\d x
  \le  c(h \gamma^{-\frac12}\eta + \min(h + h^{-1} \eta,1)) \gamma^{-\frac12}\eta.
\end{equation*}
\end{theorem}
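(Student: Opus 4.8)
The plan is to adapt the test-function device from \cite{Bonito:2017}. Write $u^\dag:=u(q^\dag)$, $u_h^*:=u_h(q_h^*)$, $\rho:=q^\dag-q_h^*$ and $w:=\rho/q^\dag$, and set $\varphi:=w\,u^\dag$. Since $q^\dag,q_h^*\in\Uad$ and $u^\dag\in H_0^1(\Omega)\cap L^\infty(\Omega)$ (cf.\ \eqref{eqn:reg-u}), we have $w\in H^1(\Omega)\cap L^\infty(\Omega)$ and $\varphi,\,w^2u^\dag\in H_0^1(\Omega)$. The first step is to establish the algebraic identity
\begin{equation}\label{eqn:prop-id}
\tfrac12\int_\Omega w^2\bigl(q^\dag|\nabla u^\dag|^2+fu^\dag\bigr)\,\d x=\int_\Omega\rho\,\nabla u^\dag\cdot\nabla\varphi\,\d x ,
\end{equation}
which I would obtain by expanding $\nabla\varphi=u^\dag\nabla w+w\nabla u^\dag$, rewriting $w\nabla w=\tfrac12\nabla(w^2)$, and using $w^2u^\dag$ as a test function in \eqref{eqn:var-elliptic} to eliminate the term $\int_\Omega u^\dag q^\dag\nabla u^\dag\cdot\nabla(w^2)\,\d x$; no integration by parts or extra regularity of $u^\dag$ is needed at this stage. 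Since $c_0\le q^\dag\le c_1$, the left-hand side of \eqref{eqn:prop-id} is comparable, up to a constant depending only on $c_0,c_1$, to the weighted quantity in the theorem, so it suffices to estimate the right-hand side of \eqref{eqn:prop-id}.

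Next I would decompose $\rho\nabla u^\dag=(q^\dag\nabla u^\dag-q_h^*\nabla u_h^*)-q_h^*\nabla(u^\dag-u_h^*)$, so that $\int_\Omega\rho\nabla u^\dag\cdot\nabla\varphi\,\d x=T_1+T_2$ with $T_1:=\int_\Omega(q^\dag\nabla u^\dag-q_h^*\nabla u_h^*)\cdot\nabla\varphi\,\d x$ and $T_2:=-\int_\Omega q_h^*\nabla(u^\dag-u_h^*)\cdot\nabla\varphi\,\d x$. For $T_1$, the Galerkin orthogonality $(q^\dag\nabla u^\dag-q_h^*\nabla u_h^*,\nabla v_h)=0$ for all $v_h\in X_h$ (both sides equal $(f,v_h)$ by \eqref{eqn:var-elliptic} and \eqref{eqn:fem-elliptic}) lets me replace $\nabla\varphi$ by $\nabla(\varphi-P_h\varphi)$; splitting once more gives $T_1=T_{1a}+T_{1b}$ with $T_{1a}:=\int_\Omega\rho\nabla u^\dag\cdot\nabla(\varphi-P_h\varphi)\,\d x$ and $T_{1b}:=\int_\Omega q_h^*\nabla(u^\dag-u_h^*)\cdot\nabla(\varphi-P_h\varphi)\,\d x$. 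In $T_{1a}$ I would integrate by parts to get $T_{1a}=-\int_\Omega\nabla\cdot(\rho\nabla u^\dag)\,(\varphi-P_h\varphi)\,\d x$; the boundary term vanishes since $\varphi-P_h\varphi\in H_0^1(\Omega)$, and $\nabla\cdot(\rho\nabla u^\dag)=\nabla\rho\cdot\nabla u^\dag+\rho\,\Delta u^\dag\in L^2(\Omega)$ thanks to $u^\dag\in H^2(\Omega)\cap W^{1,\infty}(\Omega)$ from \eqref{eqn:reg-u} together with $q^\dag\in H^2(\Omega)\cap W^{1,\infty}(\Omega)$ and $q_h^*\in V_h$.

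The remaining work is quantitative. From Lemma \ref{lem:ellip-est-1}, $\|\nabla q_h^*\|_{L^2(\Omega)}\le c\gamma^{-1/2}\eta$, and $\gamma^{-1/2}\eta\ge1$ because $\eta\ge\gamma^{1/2}$; combined with $q^\dag\in W^{1,\infty}(\Omega)\cap H^2(\Omega)$ and \eqref{eqn:reg-u}, this yields $\|\varphi\|_{H^1(\Omega)}\le c\gamma^{-1/2}\eta$ and $\|\nabla\cdot(\rho\nabla u^\dag)\|_{L^2(\Omega)}\le c\gamma^{-1/2}\eta$. The $H^1$ a priori bound $\|\nabla(u^\dag-u_h^*)\|_{L^2(\Omega)}\le c\min(h+h^{-1}\eta,\,1)$ follows by combining Lemma \ref{lem:ellip-est-1} with the projection estimate \eqref{eqn:proj-L2-error} (with $s=2$, for $u^\dag$), an inverse inequality on the quasi-uniform mesh and the bound $h^2\le\eta$ for the $c(h+h^{-1}\eta)$ branch, and with the energy estimate $\|\nabla u_h^*\|_{L^2(\Omega)}\le c$ (test \eqref{eqn:fem-elliptic} with $v_h=u_h^*$) for the $c$ branch. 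Using $\|\varphi-P_h\varphi\|_{L^2(\Omega)}\le ch\|\varphi\|_{H^1(\Omega)}$ and $\|\nabla(\varphi-P_h\varphi)\|_{L^2(\Omega)}\le c\|\varphi\|_{H^1(\Omega)}$ from \eqref{eqn:proj-L2-error} (with $s=1$), the Cauchy--Schwarz inequality gives $|T_{1a}|\le ch\gamma^{-1}\eta^2$ and $|T_{1b}|+|T_2|\le c\min(h+h^{-1}\eta,1)\,\gamma^{-1/2}\eta$; substituting into \eqref{eqn:prop-id} and rearranging yields the asserted estimate with $\eta=h^2+\delta+\gamma^{1/2}$.

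The main obstacle is the consistency term $T_{1a}$: a direct Cauchy--Schwarz bound gives only $|T_{1a}|\le c\|\nabla(\varphi-P_h\varphi)\|_{L^2(\Omega)}\le c\gamma^{-1/2}\eta$, which need not be small, so the integration by parts — to extract the factor $h$ — followed by the estimate of $\|\nabla\cdot(\rho\nabla u^\dag)\|_{L^2(\Omega)}$ through $\|\nabla q_h^*\|_{L^2(\Omega)}$ is essential, and it is precisely there that the regularization parameter, mesh size and noise level become intertwined. A secondary difficulty is that the $H^1$ bound on $u^\dag-u_h^*$ has to be extracted from the weighted $L^2$ estimate of Lemma \ref{lem:ellip-est-1} rather than taken off the shelf, and the elementary identity \eqref{eqn:prop-id} must be set up with care regarding admissibility of its test functions.
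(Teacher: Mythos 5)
Your proposal is correct and follows essentially the same route as the paper: the same test function $\varphi=\frac{q^\dag-q_h^*}{q^\dag}u(q^\dag)$, the same key identity reducing the weighted integral to $((q^\dag-q_h^*)\nabla u,\nabla\varphi)$, and the same two-term splitting (your $T_{1a}$ is the paper's ${\rm I}_1$ after integration by parts, and $T_{1b}+T_2$ recombines exactly into the paper's ${\rm I}_2=(q_h^*\nabla(u_h(q_h^*)-u),\nabla P_h\varphi)$), with identical use of Lemma \ref{lem:ellip-est-1}. The only cosmetic difference is that you derive the identity by testing the weak form with $w^2u^\dag$ rather than integrating by parts against the strong equation, which is a harmless (indeed slightly cleaner) variant.
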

\begin{proof}
For any test function $\fy\in H_0^1(\Omega)$, we have the following splitting  (with $u=u(q^\dag)$)
\begin{align*}
 ((q^\dag-q_h^*)\nabla u,\nabla\fy)& =((q^\dag-q_h^*)\nabla u,\nabla(\fy-P_h\fy))+ (q^\dag\nabla u-q_h^*\nabla u,\nabla P_h\fy).
\end{align*}
Applying integration by parts and the variational formulations of $u$ and $u_h(q_h^*)$ to the first and second terms, respectively leads to
 \begin{align}\label{eqn:sp-00}
 ((q^\dag-q_h^*)\nabla u,\nabla\fy)&=-(\nabla\cdot((q^\dag-q_h^*)\nabla u), \fy-P_h\fy ) + (q_h^*\nabla (u_h(q_h^*) - u),\nabla P_h\fy)\nonumber\\
  &=: {\rm I}_1+{\rm I}_2.
\end{align}
Next we bound the two terms. Direct computation with the triangle inequality gives
\begin{align*}
 \| \nabla\cdot((q^\dag-q_h^*)\nabla u)\|_{L^2(\Omega)} \le & \| \nabla q^\dag\|_{L^\infty(\Omega)}  \| \nabla u \|_{L^2(\Omega)}
 +\| q^\dag-q_h^*\|_{L^\infty(\Omega)}\| \Delta u \|_{L^2(\Omega)}\\
   & +  \| \nabla q_h^*\|_{L^2(\Omega)}  \| \nabla u \|_{L^\infty(\Omega)}.
\end{align*}
In view of the regularity estimate \eqref{eqn:reg-u} and the box constraint of the admissible set $\Uad$, we derive
\begin{equation*}
 \| \nabla\cdot ((q^\dag-q_h^*)\nabla u)\|_{L^2(\Omega)}
 \le c +  \| \nabla q_h^*\|_{L^2(\Omega)}  \| \nabla u \|_{L^\infty(\Omega)}
 \le c (1+  \| \nabla q_h^* \|_{L^2(\Omega)}).
\end{equation*}
This and the Cauchy-Schwarz inequality imply that the term ${\rm I}_1$ is bounded by
\begin{equation*}
 |{\rm I}_1 | \le  c(1+\| \nabla q_h^* \|_{L^2(\Omega)} ) \|  \fy-P_h\fy \|_{L^2(\Omega)}.
\end{equation*}
Now we choose the test function $\fy$ to be $\fy\equiv \frac{q^\dag-q_h^*}{q^\dag} u,$
and then direct computation gives
\begin{equation*}
\nabla \fy = \big(q^{\dag-1} \nabla(q^\dag-q_h^*) - q^{\dag-2}(q^\dag-q_h^*)\nabla q^\dag\big) u + q^{\dag-1}(q^\dag-q_h^*)\nabla u,
\end{equation*}
which implies $\fy\in H_0^1(\Omega)$.
By the box constraint of the admissible set $\Uad$ and the regularity estimate \eqref{eqn:reg-u}, we have
\begin{align*}
  \|\nabla\fy\|_{L^2(\Omega)}&\le c\big[(1+\|\nabla q_h^*\|_{L^2(\Omega)})\|u \|_{L^\infty(\Omega)}
  + \| \nabla u \|_{L^2(\Omega)}\big]\\
     &\le c(1+\|\nabla q_h^*\|_{L^2(\Omega)}).
\end{align*}
Now the approximation property of the projection operator $P_h$ in \eqref{eqn:proj-L2-error} implies
\begin{equation*}
  \|\fy -P_h\fy\|_{L^2(\Omega)} \le ch\| \nabla \fy\|_{L^2(\Omega)} \le ch(1+\|\nabla q_h^*\|_{L^2(\Omega)}).
\end{equation*}
Thus, in view of  Lemma \ref{lem:ellip-est-1}, the term ${\rm I}_1 $ in \eqref{eqn:sp-00} can be bounded by
\begin{align}\label{eqn:I1-elliptic}
  |{\rm I}_1 | & \le  ch (1+\| \nabla q_h ^*\|_{L^2(\Omega)} )^2 \le  c    h(1+\gamma^{-1}\eta^2) \le  c    h \gamma^{-1}\eta^2.
\end{align}
For the term ${\rm I}_2$, by the triangle inequality, inverse inequality on the space $X_h$, the $L^2(\Omega)$ 
stability of $P_h$ and Lemma \ref{lem:ellip-est-1}, we have
\begin{align*}
\|  \nabla(u - u_h(q_h^*)) \|_{L^2(\Omega)} & \leq \|  \nabla(u - P_hu ) \|_{L^2(\Omega)} + h^{-1}\|  P_h u  - u_h(q_h^*)  \|_{L^2(\Omega)}\\
  & \leq c(h + h^{-1}\| u - u_h(q_h^*)   \|_{L^2(\Omega)})\le  c(h + h^{-1}\eta).
\end{align*}
Meanwhile, clearly, there holds $\|  \nabla(u - u_h(q_h^*)) \|_{L^2(\Omega)} \le c$, and hence,
the Cauchy-Schwarz inequality and Lemma \ref{lem:ellip-est-1} imply
\begin{align}
  {\rm I}_2  & \le    \|  \nabla(u - u_h(q_h^*)) \|_{L^2(\Omega)} \|  \nabla \fy\|_{L^2(\Omega)}\nonumber\\
   &\le c \min(h + h^{-1}\eta, 1)(1+\|\nabla q_h^* \|_{L^2(\Omega)})\label{eqn:I2-elliptic}\\
   &\le c \min(h^{-1}\eta, 1) \gamma^{-\frac12}\eta.\nonumber
\end{align}
The  estimates \eqref{eqn:I1-elliptic} and \eqref{eqn:I2-elliptic} together imply
\begin{align*}
 ((q^\dag-q_h^*)\nabla u,\nabla\fy)
  \le  c(h \gamma^{-\frac12}\eta + \min(h + h^{-1} \eta,1)) \gamma^{-\frac12}\eta.
\end{align*}
Now we claim the identity
\begin{equation*}
  ((q^\dag-q_h^*)\nabla u,\nabla\fy)  = \frac12\int_\Omega \Big(\frac{q^\dag-q_h^*}{q^\dag}\Big)^2 \big(  q^\dag | \nabla u  |^2 + fu \big) \,\d x,
\end{equation*}
which together with the preceding estimate leads directly to the desired assertion. 
It remains to show the claim. Actually, integration by parts yield
\begin{equation*}
  ((q^\dag-q_h^*)\nabla u,\nabla\fy) = -\Big(\nabla(\frac{q^\dag-q_h^*}{q^\dag}),q^\dag\fy \nabla u\Big)-\Big(\frac{q^\dag-q_h^*}{q^\dag}\fy,\nabla\cdot(q^\dag\nabla u)\Big).
\end{equation*}
By the governing equation for $u$, $f=-\nabla\cdot(q^\dag\nabla u)$, we deduce
\begin{align*}
  ((q^\dag-q_h^*)\nabla u,\nabla\fy) = \frac{1}{2}((q^\dag-q_h^*)\nabla u,\nabla\fy) -\frac12\Big(\nabla(\frac{q^\dag-q_h^*}{q^\dag}),q^\dag\fy \nabla u\Big)+\frac12\Big(\frac{q^\dag-q_h^*}{q^\dag}\fy, f\Big).
\end{align*}
Then plugging $\fy=\frac{q^\dag-q_h^*}{q^\dag}u$ and collecting the terms give the claim and complete the proof.
\end{proof}

The next result gives an $L^2(\Omega)$ error estimate. The
notation $\mathrm{dist}(x,\partial\Omega)$ denotes the distance of $x\in\Omega$ to the
boundary $\partial\Omega$.
\begin{corollary}\label{cor:error-elliptic}
Let Assumption \ref{ass:elliptic} be fulfilled, and assume that there exists some $\beta\geq0$ such that
\begin{equation} \label{eqn:cond-beta}
(q^\dag|\nabla u(q^\dag)|^2 + fu(q^\dag))(x) \ge c\,\mathrm{dist}(x,\partial\Omega)^\beta \quad \text{a.e. in }\Omega.
\end{equation}
Then the approximation $q_h^*$ satisfies
\begin{equation*}
  \|q^\dag-q_h^*\|_{L^2(\Omega)}  \le c((h \gamma^{-\frac12}\eta + \min( h^{-1} \eta,1)) \gamma^{-\frac12}\eta)^{\frac1{2(1+\beta)}}.
\end{equation*}
In particular, for any $\delta>0$, the choices {$\gamma \sim \delta^2$}
and {$h\sim \sqrt{\delta}$} imply
\begin{equation*}
  \|q^\dag-q_h^*\|_{L^2(\Omega)}  \le c\delta^{\frac1{4(1+\beta)}}.
\end{equation*}
\end{corollary}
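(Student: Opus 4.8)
The plan is to deduce the $L^2(\Omega)$ estimate from the weighted estimate in Theorem \ref{thm:ellip-1} by splitting the domain into a bulk region where the weight is bounded below and a thin boundary layer where it may degenerate. Write $w(x) = q^\dag(x)|\nabla u(q^\dag)(x)|^2 + f(x)u(q^\dag)(x)$ for the weight, and denote by $M := \int_\Omega (q^\dag - q_h^*)^2 w\,\d x$ the quantity bounded in Theorem \ref{thm:ellip-1}, so $M \le c(h\gamma^{-1/2}\eta + \min(h+h^{-1}\eta,1))\gamma^{-1/2}\eta$. For a parameter $\rho>0$ to be chosen, set $\Omega_\rho = \{x\in\Omega : \mathrm{dist}(x,\partial\Omega) > \rho\}$ and $\Omega\setminus\Omega_\rho$ the complementary boundary strip.

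On the bulk $\Omega_\rho$, condition \eqref{eqn:cond-beta} gives $w(x) \ge c\rho^\beta$, hence
\begin{equation*}
  \int_{\Omega_\rho}(q^\dag-q_h^*)^2\,\d x \le \frac{1}{c\rho^\beta}\int_{\Omega_\rho}(q^\dag-q_h^*)^2 w\,\d x \le c\rho^{-\beta} M.
\end{equation*}
On the strip $\Omega\setminus\Omega_\rho$, we use only the box constraint $c_0 \le q^\dag, q_h^* \le c_1$ from $\Uad$, so $(q^\dag-q_h^*)^2 \le (c_1-c_0)^2$ pointwise, and since the strip has Lebesgue measure $|\Omega\setminus\Omega_\rho| \le c\rho$ (for a Lipschitz/convex polyhedral domain), we get $\int_{\Omega\setminus\Omega_\rho}(q^\dag-q_h^*)^2\,\d x \le c\rho$. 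Adding the two pieces yields $\|q^\dag - q_h^*\|_{L^2(\Omega)}^2 \le c(\rho^{-\beta}M + \rho)$, and optimizing over $\rho$ — i.e. balancing $\rho^{-\beta}M \sim \rho$, which gives $\rho \sim M^{1/(1+\beta)}$ — produces $\|q^\dag-q_h^*\|_{L^2(\Omega)}^2 \le cM^{1/(1+\beta)}$, that is, $\|q^\dag-q_h^*\|_{L^2(\Omega)} \le cM^{1/(2(1+\beta))}$. Substituting the bound on $M$ gives the first displayed estimate (noting $h\gamma^{-1/2}\eta$ already absorbs the $h$ in $\min(h+h^{-1}\eta,1)$ up to constants, which is why the stated bound has $\min(h^{-1}\eta,1)$).

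For the concrete rate, I would plug in $\gamma \sim \delta^2$ and $h \sim \sqrt\delta$ into $\eta = h^2 + \delta + \gamma^{1/2} \sim \delta$, so $\gamma^{-1/2}\eta \sim 1$, $h\gamma^{-1/2}\eta \sim \sqrt\delta$, and $\min(h^{-1}\eta,1) = \min(\sqrt\delta,1) \sim \sqrt\delta$; hence the bracket is $O(\sqrt\delta)\cdot O(1) = O(\sqrt\delta)$, and raising to the power $1/(2(1+\beta))$ gives $\|q^\dag-q_h^*\|_{L^2(\Omega)} \le c\delta^{1/(4(1+\beta))}$.

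The only mildly delicate point is the measure estimate $|\Omega\setminus\Omega_\rho| \le c\rho$ for the boundary collar, but this is standard for a convex polyhedral (hence Lipschitz) domain and the constant depends only on $\Omega$; everything else is a routine two-region splitting and optimization in $\rho$. There is no real obstacle here — the substance of the corollary is entirely contained in Theorem \ref{thm:ellip-1}, and this argument is just the mechanism for converting a degenerate-weight $L^2$ bound into an honest $L^2$ bound at the cost of the exponent $1/(2(1+\beta))$.
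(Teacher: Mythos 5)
Your proposal is correct and follows essentially the same route as the paper: the same decomposition into $\Omega_\rho$ and its complement, the same bulk estimate via the weight lower bound, the same $|\Omega\setminus\Omega_\rho|\le c\rho$ bound on the collar, and the same balancing $\rho\sim M^{1/(1+\beta)}$. Your side remark that the $h$ inside $\min(h+h^{-1}\eta,1)$ is absorbed by $h\gamma^{-1/2}\eta$ (since $\gamma^{-1/2}\eta\ge 1$) is also correct, and the concrete rate computation matches the paper.
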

\begin{proof}
Let $u=u(q^\dag)$. Then it follows from Theorem \ref{thm:ellip-1} that
\begin{equation*}
  \int_\Omega (q^\dag-q_h^*)^2 \big(  q^\dag | \nabla u  |^2 + fu \big) \,\d x
  \leq c(h \gamma^{-\frac12}\eta + \min(h + h^{-1} \eta,1)) \gamma^{-\frac12}\eta. 
\end{equation*}
Then we decompose the domain $\Omega$ into two disjoint sets $\Omega=\Omega_\rho\cup\Omega_\rho^c$:
\begin{equation*}
 \Omega_\rho = \{ x\in\Omega: ~ \text{dist}(x,\partial\Omega) \ge \rho \} \quad \text{and}\quad  \Omega_\rho^c = \Omega\backslash \Omega_\rho.
\end{equation*}
where the constant $\rho>0$ is to be chosen. On the subdomain $\Omega_\rho$, we have
\begin{align*}
 \int_{\Omega_\rho} (q^\dag-q_h^*)^2 \,\d x &\le  \rho^{-\beta} \int_{\Omega_\rho} (q^\dag-q_h^*)^2 \rho^{\beta} \,\d x \\
 &\le  \rho^{-\beta} \int_{\Omega_\rho} (q^\dag-q_h^*)^2 \text{dist}(x,\partial\Omega)^\beta \,\d x\\
 &\le  c \rho^{-\beta} \int_{\Omega_\rho} (q^\dag-q_h^*)^2  \big(  q^\dag | \nabla u  |^2 + fu \big) \,\d x\\
&\le c \rho^{-\beta}(h \gamma^{-\frac12}\eta + \min(h + h^{-1} \eta,1)) \gamma^{-\frac12}\eta. 
\end{align*}
Meanwhile, by the box constraint of $\Uad$ and $\Uad_h$, we have
\begin{align*}
 \int_{\Omega_\rho^c} (q^\dag-q_h^*)^2 \,\d x \le   c |\Omega_\rho^c|  \le c\rho.
\end{align*}
Then the desired result follows directly by balancing $\rho^{-\beta}(h \gamma^{-\frac12}\eta + \min(h + h^{-1} \eta,1)) \gamma^{-\frac12}\eta$
with $\rho$. 
\end{proof}

\begin{remark}\label{rem:err-2}
The positivity condition \eqref{eqn:cond-beta} has been established in \cite[Lemma 3.7]{Bonito:2017} for $\beta=2$,
provided that $\Omega$ is a Lipschitz domain, $q^\dag\in \mathcal{A}$ and $f\in L^2(\Omega)$ with $f \ge c_f >0$.
Moreover, the condition with $\beta=0$ holds provided that the domain $\Omega$ is $C^{2,\alpha}$,
$q^\dag \in C^{1,\alpha}(\overline{\Omega})$ and $f\in C^{0,\alpha}(\overline{\Omega})$, with $\alpha>0$ and
$f \ge c_f >0$ \cite[Lemma 3.3]{Bonito:2017}. In the latter case, by choosing {$\gamma \sim \delta^2$}
and {$h\sim \sqrt{\delta}$}, we obtain
\begin{equation*}
  \|q^\dag-q_h^*\|_{L^2(\Omega)}  \le c\delta^{\frac14}.
\end{equation*}
Qualitatively, this estimate agrees with the conditional stability estimates
in \cite{Bonito:2017}. {Indeed,
for $u({q_1}),u({q_2}) \in H^2(\Omega)\cap H_0^1(\Omega)$ with $\|u({q_1}) - u(q_2)\|_{L^2(\Omega)}\le
\delta$, \cite[Theorem 3.2]{Bonito:2017} implies
\begin{equation*}
    \| q_1- q_2 \|_{L^2(\Omega)} \le c \|  u(q_1) - u(q_2)\|_{H^1(\Omega)}^\frac12.
\end{equation*}
This, the Gagliardo-Nirenberg interpolation inequality \cite{BrezisMironescu:2018}
\begin{equation*}
  \|u\|_{H^1(\Omega)} \leq \|u\|_{L^2(\Omega)}^\frac12\|u\|_{H^2(\Omega)}^\frac12,
\end{equation*}
and the regularity assumption $u(q_1), u(q_2) \in H^2(\Omega)$ directly give
\begin{align*}
   \| q_1- q_2\|_{L^2(\Omega)}   & \le c (\|  u(q_1) \|_{H^2(\Omega)} +\| u(q_2) \|_{H^2(\Omega)})^{\frac14}
      \|  u(q_1) -u(q_2) \|_{L^2(\Omega)}^\frac14\\
      & \le  c \|  u(q_1) - u(q_2) \|_{L^2(\Omega)}^\frac14 \le c \delta^{\frac14}.
\end{align*}
}There is a growing interest in using conditional stability estimates to derive convergence rates 
for continuous regularized formulations for inverse problems; see, e.g., \cite{EggerHofmann:2018} and references
therein. However, analogous results for discretization errors based on conditional stability seem still missing.
\end{remark}

\begin{remark}\label{rem:cond-ellip}
Several alternative structural conditions have been proposed for deriving convergence rates, and it is
instructive to compare these conditions for the elliptic inverse problem. One condition {\rm(}with a Neumann boundary
condition{\rm)} is given by \cite{Falk:1983}
\begin{equation}\label{eqn:cond-falk}
\nabla u (q^\dag)(x) \cdot \mathbf{\nu}  \ge c> 0,\quad \text{a.e. in}~~\Omega,
\end{equation}
for some constant $c$ and constant vector $\mathbf{\nu}$, or the less restrictive condition  \cite{KohnLowe:1988}
$  \max (|\nabla u |, \Delta u  ) \ge c> 0$, a.e. in $\Omega.$  Either condition
implies the positivity condition \eqref{eqn:cond-beta} with $\beta=0$, provided that $u$ and $f$ have the same
sign {\rm(}e.g., by weak maximum principle for elliptic PDEs{\rm)}. Wang and Zou \cite{WangZou:2010}
derived an error estimate under a weaker assumption $\alpha_0 | \nabla u  |^2  \geq  f$
a.e. in $\Omega.$ However, this condition is not positively homogeneous {\rm(}with respect to problem data
$f${\rm)}.
\end{remark}

\begin{remark}\label{rmk:rate-ell}
Falk \cite{Falk:1983} proposed an numerical schemes for the elliptic inverse problem
with a Neumann boundary condition, based on the output least-square formulation,
by looking for $u_h(q_h^*)\in V_h^r$ {\rm(}continuous piecewise polynomials of degree $r${\rm)}
and $q_h^*\in V_h^{r+1}$. If assumption \eqref{eqn:cond-falk} holds, $u \in W^{r+3,\infty}(\Omega)$ and $q^\dag
\in H^{r+1}(\Omega)\cap \mathcal{A}$, then
$$ \| q^\dag - q_h^* \|_{L^2(\Omega)} \le c (h^r + \delta h^{-2}). $$
If $r=1$ and {$h\sim\delta^{\frac13}$}, it implies an error $O(\delta^{\frac13})$.
This better rate is obtained the stronger regularity assumption
$u(q^\dag)\in W^{4,\infty}(\Omega)$ than that in Theorem \ref{thm:ellip-1}.
\end{remark}

\begin{remark}
Theorem \ref{thm:ellip-1} provides guidance for choosing the algorithmic parameters: given
the noise level $\delta$, we may choose {$\gamma\sim \delta^2$}
and {$h\sim\delta^\frac12$}. The choice {$\gamma\sim\delta^2$} differs from the usual condition
for Tikhonov regularization, i.e., ${ \lim_{\delta\to0^+}\frac{\delta^2}{\gamma(\delta)}=0,}$
but it agrees with that with conditional stability {\rm(}see, e.g., \cite[Theorems 1.1 and 1.2]{EggerHofmann:2018}{\rm)}.
\end{remark}

\section{Parabolic case}\label{sec:parabolic}
Now we analyze the parabolic inverse problem. For a function $v(x,t):\Omega\times(0,T)\to \mathbb{R}$, we shall write $v(t)=v(\cdot,t)$
as a vector valued function on $(0,T)$ below.

\subsection{Finite element approximation}
To recover the diffusion coefficient $q^\dag$ in \eqref{eqn:parabolic},
we employ the standard output least-squares formulation with an $H^1(\Omega)$ seminorm penalty:
\begin{equation}\label{eqn:ob}
    \min_{q \in \Uad}{J_\gamma(q)}=\tfrac12  \|u( q) - z^\delta \|_{L^2(T-\sigma,T;L^2(\Omega))}^2 + \tfrac\gamma2\|\nabla q \|_{L^2(\Omega) }^2,
\end{equation}
where the admissible set $\Uad$ is given by \eqref{eqn:admset}, and $u(q)$ satisfies the variational problem
\begin{equation}\label{eqn:var}
   (\partial_t u(q),v)+(q\nabla u(q),\nabla v) = (f,v),\quad \forall  v\in H_0^1(\Omega),t\in(0,T], \quad \mbox{with }\quad u(0)=u_0.
\end{equation}

Now we describe a discretization of problem \eqref{eqn:ob}--\eqref{eqn:var}, based on the Galerkin FEM in space and the backward Euler
method in time. Specifically, we partition the time interval $[0,T]$ uniformly, with grid points $t_n=n\tau$, $n=0,\ldots,N$, and
a time step size $\tau=T/N$. The fully discrete scheme for problem \eqref{eqn:parabolic} reads: Given $U_h^0=P_hu_0\in X_h$, find $U_h^n\in X_h$ such that
\begin{align}\label{eqn:fully-0}
  (\bar \partial_\tau   U_h^n ,\chi)+(q \nabla U_h^n, \nabla \chi)= (f(t_n),\chi),\quad \forall\chi\in X_h,\quad n=1,2,\ldots,N,
\end{align}
where   $\bar\partial_\tau  \varphi^n = \frac{\varphi^n
- \varphi^{n-1}}{\tau}$ denotes the
backward Euler approximation to $\partial_t \varphi(t_n)$ (with the shorthand $\varphi^n=\varphi(t_n)$).
Using operator $A_h(q)$ in \eqref{eqn:Ah}, we rewrite \eqref{eqn:fully-0} as
\begin{equation*}
  \bar \partial_\tau U_h^n +A_h(q) U_h^n = P_hf(t_n), \quad n=1,2,\ldots,N.
\end{equation*}

Then the finite element discretization of problem \eqref{eqn:ob}--\eqref{eqn:var} reads
\begin{equation}\label{eqn:ob-disc-1}
    \min_{q_h \in \Uad_h} J_{\gamma,h,\tau}(q_h)=\tau \sum_{n=N_\sigma}^N  \|U_h^n(q_h) - z_n^\delta\|_{L^2(\Omega)}^2 + \frac\gamma2\|\nabla q_h \|_{L^2(\Omega) }^2,
\end{equation}
with
\begin{equation}\label{eqn:zn}
   z_n^\delta=\tau^{-1}\int_{t_{n-1}}^{t_n}z^\delta(t) \d t,
\end{equation}
where the discrete admissible set $\Uad_h$ is given by \eqref{eqn:admset-h} and $U_h^n(q_h)\in X_h$ satisfies $U_h^0=P_hu_0$ and
\begin{align}\label{eqn:fully-1}
  \bar \partial_\tau U_h^n(q_h) + A_h(q_h)  U_h^n(q_h) = P_hf(t_n), \quad n=1,2,\ldots,N+1.
\end{align}
Throughout, we assume that $N_\sigma = (T-\sigma)/\tau+1$ is an integer.
Analogous to the elliptic case, the following existence and convergence results hold.
If $u_0\in H_0^1(\Omega)$ and $f\in C([0,T];L^2(\Omega))$, for every $h,\tau>0$, there
exists at least one minimizer $q_{h,\tau}^*\in \mathcal{A}_h$ to problem
\eqref{eqn:ob-disc-1}--\eqref{eqn:fully-1}, and furthermore, the sequence of minimizers $\{q_{h,\tau}^*\}_{h,\tau>0}$
contains a subsequence that converges in $H^1(\Omega)$ to a minimizer of problem
\eqref{eqn:ob}--\eqref{eqn:var}, as $h,\tau\to0^+$; see \cite{Gutman:1990,KeungZou:1998}
for a proof.

\subsection{Error estimates}
Now we derive error estimates of approximations $q_h^*$ under the following regularity
condition on the problem data.

\begin{assumption}\label{ass:data2}
The diffusion coefficient $q^\dag$, initial data $u_0$ and source term $f$ satisfy
$q^\dag\in H^2(\Omega)\cap W^{1,\infty}(\Omega)\cap \Uad$,  
$u_0\in H^2(\Omega)\cap H_0^1(\Omega) \cap W^{1,\infty}(\Omega)$, $f\in L^\infty((0,T)\times\Omega)\cap C^1([0,T];L^2(\Omega))\cap W^{2,1}(0,T;L^2(\Omega)).$
\end{assumption}

Under Assumption \ref{ass:data2}, the parabolic problem \eqref{eqn:parabolic} has a unique solution
\begin{equation}\label{reg-parabolic}
u \in W^{1,p}(0,T;L^q(\Omega)) \cap L^p(0,T; W^{2,q}(\Omega)), \quad \forall p,q\in(1,\infty)
\end{equation}
The result follows directly from maximal $L^p$ regularity of the parabolic equation, see e.g., \cite[Lemma 2.1]{LiSun:2017}.
Then by real interpolation and Sobolev embedding theorem, we deduce
\begin{equation}\label{reg-parabolic-inf}
  u \in L^\infty(0,T; W^{1,\infty}(\Omega)).
\end{equation}
Further, there holds \cite[Lemma 3.2]{Thomee:2006}
\begin{equation}\label{reg-parabolic-2}
\| \partial_t u(t) \|_{L^2(\Omega)} + t\| \partial_{tt}u(t) \|_{L^2(\Omega)} \le c,\quad \forall t\in[0,T].
\end{equation}{
The latter estimate immediately implies a useful uniform bound $\|u(t)\|_{H^2(\Omega)}\leq c$, since
\begin{equation*}
  \|A(q^\dag)u(t)\|_{L^2(\Omega)}\leq \|\partial_tu(t)\|_{L^2(\Omega)} + \|f(t)\|_{L^2(\Omega)}\leq c.
\end{equation*}}

With the choice of $z_n^\delta$ in \eqref{eqn:ob-disc-1}, we have the following estimate.
\begin{lemma}\label{lem:noise-p}
Let Assumption \ref{ass:data2} be fulfilled. Then for $z_n^\delta$ defined in \eqref{eqn:zn}, there holds
\begin{align*}
 \tau \sum_{n=N_\sigma}^N  \int_\Omega |u(t_n) - z_n^\delta|^2 \,\d x \le c (\tau^2 + \delta^2).
\end{align*}
\end{lemma}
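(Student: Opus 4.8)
The plan is to split the error $u(t_n) - z_n^\delta$ into a part that measures how far the time average $z_n^\delta = \tau^{-1}\int_{t_{n-1}}^{t_n} z^\delta(t)\,\d t$ is from the corresponding time average of the exact solution, and a part that measures how far the nodal value $u(t_n)$ is from that time average of $u$. Concretely, write
\begin{equation*}
  u(t_n) - z_n^\delta = \Big(u(t_n) - \tau^{-1}\int_{t_{n-1}}^{t_n} u(t)\,\d t\Big) + \tau^{-1}\int_{t_{n-1}}^{t_n}\big(u(t) - z^\delta(t)\big)\,\d t,
\end{equation*}
and bound each piece separately after taking $L^2(\Omega)$ norms and summing $\tau\sum_{n=N_\sigma}^N$.

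For the second (noise) term, I would apply the Cauchy--Schwarz inequality in $t$ to get $\|\tau^{-1}\int_{t_{n-1}}^{t_n}(u - z^\delta)\,\d t\|_{L^2(\Omega)}^2 \le \tau^{-1}\int_{t_{n-1}}^{t_n}\|u(t) - z^\delta(t)\|_{L^2(\Omega)}^2\,\d t$, so that $\tau\sum_{n=N_\sigma}^N$ of it is dominated by $\int_{T-\sigma}^{T}\|u(t) - z^\delta(t)\|_{L^2(\Omega)}^2\,\d t \le \delta^2$ by the noise assumption \eqref{eqn:noise-p0}. (One must be slightly careful that $t_{N_\sigma - 1} = T - \sigma$, which holds by the standing assumption that $N_\sigma = (T-\sigma)/\tau + 1$ is an integer.) For the first (quadrature) term, I would use the standard Taylor/averaging identity $u(t_n) - \tau^{-1}\int_{t_{n-1}}^{t_n} u(t)\,\d t = \tau^{-1}\int_{t_{n-1}}^{t_n}\int_{t}^{t_n}\partial_s u(s)\,\d s\,\d t$, take $L^2(\Omega)$ norms, and bound it by $\int_{t_{n-1}}^{t_n}\|\partial_s u(s)\|_{L^2(\Omega)}\,\d s$; squaring and using Cauchy--Schwarz gives a bound $\tau\int_{t_{n-1}}^{t_n}\|\partial_s u(s)\|_{L^2(\Omega)}^2\,\d s$, so $\tau\sum_n$ of this is $\le \tau^2\int_{T-\sigma}^T\|\partial_s u(s)\|_{L^2(\Omega)}^2\,\d s \le c\tau^2$, where the last bound uses the regularity estimate \eqref{reg-parabolic-2} (which gives $\|\partial_t u(t)\|_{L^2(\Omega)} \le c$ uniformly, so in particular bounded away from the singularity at $t=0$ on the window $[T-\sigma,T]$).

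The main obstacle — really the only subtlety — is bookkeeping at the left endpoint of the measurement window: one has to confirm that the discrete sum $\tau\sum_{n=N_\sigma}^N$ of the cell averages exactly tiles the continuous interval $(T-\sigma, T)$ with no overlap or gap, which is why the integrality of $N_\sigma$ and the identity $t_{N_\sigma - 1} = T-\sigma$ matter; once that is in place the two estimates combine by the triangle inequality (with a factor $2$ after squaring) to give $\tau\sum_{n=N_\sigma}^N \|u(t_n) - z_n^\delta\|_{L^2(\Omega)}^2 \le c(\tau^2 + \delta^2)$, as claimed. Everything else is routine application of Cauchy--Schwarz and the cited regularity bounds; no new ideas are needed.
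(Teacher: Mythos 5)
Your proposal is correct and follows essentially the same route as the paper: the same splitting of $u(t_n)-z_n^\delta$ via the cell average $u_n=\tau^{-1}\int_{t_{n-1}}^{t_n}u(t)\,\d t$, the same Taylor/averaging identity controlled by \eqref{reg-parabolic-2} for the $O(\tau)$ quadrature part, and the same Cauchy--Schwarz-in-time argument reducing the noise part to \eqref{eqn:noise-p0}. The only cosmetic difference is that you bound the quadrature term through $\tau\int_{t_{n-1}}^{t_n}\|\partial_su\|_{L^2(\Omega)}^2\,\d s$ rather than the sup norm of $\partial_tu$ on each cell, which is equivalent here.
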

\begin{proof}
Let {$u_n = \tau^{-1}\int_{t_{n-1}}^{t_n} u(t)\, \d t$}. Then we have
\begin{equation*}
  u(t_n)-u_n = \tau^{-1}\int_{t_{n-1}}^{t_n}u(t_n)-u(t)\d t = \tau^{-1}\int_{t_{n-1}}^{t_n}\int_t^{t_n}\partial_su(s)\d s\d t,
\end{equation*}
and thus by the regularity estimate \eqref{reg-parabolic-2},
\begin{align*}
  \|u(t_n)-u_n\|_{L^2(\Omega)} &\leq \tau^{-1}\int_{t_{n-1}}^{t_n}\int_t^{t_n} \|\partial_su (s)\|_{L^2(\Omega)}\d s\d t\\
    & \le c \tau \|\partial_tu(t) \|_{C([t_{n-1},t_n];L^2(\Omega))} \le c\tau.
\end{align*}
Meanwhile, by the Cauchy-Schwarz inequality, $\tau|u_n|^2\leq \int_{t_{n-1}}^{t_n}u(t)^2\d t$.
The last two estimates, the definition of the noise level $\delta$ {in \eqref{eqn:noise-p0}} and the following stability estimate
\begin{align*}
 \tau \sum_{n=N_\sigma}^N  \int_\Omega |u_n - z_n^\delta|^2 \,\d x& \le  \int_{T-\sigma}^{T} \int_\Omega |u(t) - z^\delta(t)|^2 \,\d x\d t\leq \delta^2
\end{align*}
imply the desired result immediately.
\end{proof}

The next lemma gives error estimates of fully discrete scheme for the direct problem \eqref{eqn:parabolic}:
find $U_h^n(q^\dag)$ satisfying $U_h^0=P_hu_0$ and
\begin{align}\label{eqn:fully-00}
  \bar \partial_\tau (U_h^n(q^\dag)-U_h^0) + A_h(q^\dag) U_h^n(q^\dag) = P_hf(t_n),\quad \quad n=1,2,\ldots,N.
\end{align}
It plays an important role in the error analysis below. The proof is standard but lengthy, and hence deferred to the appendix.
\begin{lemma}\label{lem:err-parabolic}
Let $q^\dag$ be the exact diffusion coefficient and $u\equiv u(q^\dag)$ the solution to problem \eqref{eqn:var},
and $\{U_h^n(q^\dag)\}$  the solution to problem \eqref{eqn:fully-00}. Then
under Assumption \ref{ass:data2}, 
\begin{equation*}
\begin{split}
\| u(t_n)-U_h^n(q^\dag)\|_{L^2(\Omega)} &\le c(\tau  + h^2).
\end{split}
\end{equation*}
\end{lemma}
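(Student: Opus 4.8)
The plan is to prove the standard $O(\tau+h^2)$ convergence rate for the backward Euler–Galerkin scheme \eqref{eqn:fully-00} by the classical Ritz-projection splitting combined with discrete maximal $L^p$ (or at least energy) estimates, and crucially exploiting the improved regularity $u\in L^\infty(0,T;W^{1,\infty}(\Omega))$ together with the weighted second-order estimate \eqref{reg-parabolic-2} and the uniform bound $\|u(t)\|_{H^2(\Omega)}\le c$ noted after it. First I would introduce the Ritz projection $R_h\colon H^1_0(\Omega)\to X_h$ associated with the exact coefficient $q^\dag$, i.e.\ $(q^\dag\nabla R_h v,\nabla\chi)=(q^\dag\nabla v,\nabla\chi)$ for all $\chi\in X_h$, which under $q^\dag\in W^{1,\infty}(\Omega)$ satisfies the standard bounds $\|v-R_hv\|_{L^2(\Omega)}+h\|\nabla(v-R_hv)\|_{L^2(\Omega)}\le ch^2\|v\|_{H^2(\Omega)}$ and a corresponding estimate for $\partial_t(v-R_hv)$. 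Then I would split the error as
\begin{equation*}
  u(t_n)-U_h^n(q^\dag)=\big(u(t_n)-R_hu(t_n)\big)+\big(R_hu(t_n)-U_h^n(q^\dag)\big)=:\rho^n+\theta^n.
\end{equation*}
The term $\rho^n$ is controlled directly: $\|\rho^n\|_{L^2(\Omega)}\le ch^2\|u(t_n)\|_{H^2(\Omega)}\le ch^2$ using the just-mentioned uniform $H^2$ bound.

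Next I would derive the error equation for $\theta^n\in X_h$. Subtracting \eqref{eqn:fully-00} from the variational form \eqref{eqn:var} tested against $\chi\in X_h$, and using the definition of $R_h$ and of $A_h(q^\dag)$, one obtains
\begin{equation*}
  \bar\partial_\tau\theta^n + A_h(q^\dag)\theta^n = -P_h\omega^n,\qquad \omega^n:=\bar\partial_\tau(R_hu(t_n))-\partial_tu(t_n),\qquad \theta^0=0,
\end{equation*}
where I have used $R_hu(0)=R_hu_0$ and $U_h^0=P_hu_0$ up to the harmless discrepancy $R_hu_0-P_hu_0$ which is $O(h^2)$ in $L^2$; this initial mismatch contributes only $O(h^2)$ after summation since $A_h(q^\dag)$ generates a (discretely) stable scheme. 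The consistency term splits as $\omega^n=\big(\bar\partial_\tau(R_h-I)u(t_n)\big)+\big(\bar\partial_\tau u(t_n)-\partial_tu(t_n)\big)=:\omega_1^n+\omega_2^n$. For $\omega_1^n$ one writes $\bar\partial_\tau(R_h-I)u(t_n)=\tau^{-1}\int_{t_{n-1}}^{t_n}(R_h-I)\partial_su(s)\,\d s$, so $\|\omega_1^n\|_{L^2(\Omega)}\le ch^2\tau^{-1}\int_{t_{n-1}}^{t_n}\|\partial_su(s)\|_{H^2(\Omega)}\,\d s$, and after multiplying by $\tau$ and summing, $\tau\sum_n\|\omega_1^n\|_{L^2(\Omega)}\le ch^2\|\partial_tu\|_{L^1(0,T;H^2(\Omega))}\le ch^2$, which is finite by \eqref{reg-parabolic}. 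For $\omega_2^n=\tau^{-1}\int_{t_{n-1}}^{t_n}(s-t_{n-1})\partial_{ss}u(s)\,\d s$ one gets $\|\omega_2^n\|_{L^2(\Omega)}\le\int_{t_{n-1}}^{t_n}\|\partial_{ss}u(s)\|_{L^2(\Omega)}\,\d s$; summing, $\tau\sum_n\|\omega_2^n\|_{L^2(\Omega)}\le \tau\|\partial_{tt}u\|_{L^1(0,T;L^2(\Omega))}$. Here the only subtlety is the singularity of $\|\partial_{tt}u(t)\|_{L^2(\Omega)}$ as $t\to0$: by \eqref{reg-parabolic-2} it behaves like $t^{-1}$, whose $L^1(0,T)$ norm is $\log(1/\tau)$-unbounded, but the standard fix is to treat the first step $n=1$ separately using only $\|\bar\partial_\tau u(t_1)-\partial_tu(t_1)\|\le c$ (bounded $\partial_tu$) and to estimate $\tau\sum_{n\ge2}\|\omega_2^n\|\le c\tau\int_\tau^T t^{-1}\,\d t=c\tau\log(T/\tau)$, which can be absorbed by first applying discrete maximal $L^p$ regularity with $p$ slightly larger so that $\tau\big(\sum_n(\tau\|\omega_2^n\|\cdot\tau^{-1})^p\big)^{1/p}$ is bounded — alternatively one uses the sharper per-step weight as in \cite[Lemma 3.2]{Thomee:2006}. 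This is the step I expect to be the main obstacle: getting the logarithm-free $O(\tau)$ bound requires the discrete maximal-$L^p$-regularity argument (the stated mechanism in the introduction) rather than a crude Gr\"onwall energy estimate.

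Finally I would close the argument by testing the $\theta$-equation appropriately. The clean route is to invoke discrete maximal $L^p$ regularity for the operator $A_h(q^\dag)$ (uniformly in $h$, valid since $q^\dag\in W^{1,\infty}(\Omega)$ and the mesh is quasi-uniform), which gives
\begin{equation*}
  \max_{1\le n\le N}\|\theta^n\|_{L^2(\Omega)}\le c\Big(\tau\sum_{n=1}^N\|P_h\omega^n\|_{L^2(\Omega)}^p\Big)^{1/p}+c\|\theta^0\|_{L^2(\Omega)}
\end{equation*}
for any $p\in(1,\infty)$, or more robustly the maximal-norm bound $\max_n\|\theta^n\|\le c\,\tau\sum_n\|P_h\omega^n\|$ obtained by Duhamel with the uniform $\ell^1$-stability of the discrete semigroup $(I+\tau A_h(q^\dag))^{-1}$. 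Combining with the $\tau\sum_n\|\omega_1^n\|+\tau\sum_n\|\omega_2^n\|\le c(h^2+\tau)$ bound and the $O(h^2)$ initial mismatch yields $\max_n\|\theta^n\|_{L^2(\Omega)}\le c(\tau+h^2)$, and the triangle inequality with $\|\rho^n\|_{L^2(\Omega)}\le ch^2$ gives $\|u(t_n)-U_h^n(q^\dag)\|_{L^2(\Omega)}\le c(\tau+h^2)$ as claimed. The bulk of the write-up is bookkeeping of the consistency terms; the genuinely delicate point is handling $\partial_{tt}u$ near $t=0$, for which I would cite \cite[Lemma 3.2]{Thomee:2006} and the discrete maximal regularity machinery already invoked in Section \ref{sec:intro}.
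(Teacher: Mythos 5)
Your overall architecture (projection splitting, consistency terms, discrete stability) is the classical one, and you have correctly located the danger zone: the behaviour of the solution near $t=0$. But the proposal does not actually close that gap, and the two fixes you sketch do not work as stated. First, your bound $\tau\sum_n\|\omega_1^n\|_{L^2(\Omega)}\le ch^2\|\partial_tu\|_{L^1(0,T;H^2(\Omega))}$ presumes $\partial_tu\in L^1(0,T;H^2(\Omega))$; this does not follow from \eqref{reg-parabolic} and is false in general under Assumption \ref{ass:data2}, since $w=\partial_tu$ solves a parabolic problem whose initial value $f(0)+\nabla\cdot(q^\dag\nabla u_0)$ is only in $L^2(\Omega)$, so $\|\partial_tu(t)\|_{H^2(\Omega)}\sim t^{-1}$ and the integral diverges logarithmically --- the same log that plagues $\omega_2^n$. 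Second, discrete maximal $\ell^p$ regularity with $p>1$ does not remove the log: with $\|\omega_2^n\|_{L^2(\Omega)}\le c\tau t_{n-1}^{-1}$ one computes $\bigl(\tau\sum_n\|\omega_2^n\|_{L^2(\Omega)}^p\bigr)^{1/p}\le c_p\tau^{1/p}$, which is strictly worse than $O(\tau)$ for every $p>1$ (the $n=1$ term alone already contributes $c\tau^{1/p}$); and \cite[Lemma 3.2]{Thomee:2006} is the continuous regularity estimate \eqref{reg-parabolic-2} itself, not a device for reweighting the discrete truncation error. So as written your argument delivers at best $O((\tau+h^2)\log(1/\tau))$, and the decisive step --- removing the logarithm --- is exactly the part left unproved.

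The paper's proof avoids integrating $\|\partial_{tt}u\|_{L^2(\Omega)}$ or $\|\partial_tu\|_{H^2(\Omega)}$ over $(0,T)$ altogether. It splits off the homogeneous evolution of $u_0$ (covered by the smooth-data theory of \cite[Theorem 3.1]{Thomee:2006}) and reduces to $u_0=0$. For the inhomogeneous part it (i) obtains the $O(h^2)$ spatial error by estimating the weighted quantity $t\zeta(t)$ with $\zeta=u_h-P_hu$ via the Duhamel formula and the smoothing property $\|e^{-A_ht}A_h\|_{L^2(\Omega)\to L^2(\Omega)}\le ct^{-1}$, using only the integrated weighted bound $\int_0^ts\|A\partial_su(s)\|_{L^2(\Omega)}\,\d s\le ct$ (which is what $f\in W^{2,1}(0,T;L^2(\Omega))$ buys); and (ii) obtains the $O(\tau)$ temporal error by invoking \cite[Lemma 4.2]{JinLizhou:nonlinear}, a backward Euler estimate requiring only $f\in W^{1,1}(0,T;L^2(\Omega))$, which works directly with the source $f$ rather than with second time derivatives of $u$. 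If you wish to keep your Ritz-projection framework, you would need to reproduce an argument of that type --- e.g., a weighted-in-time or summation-by-parts treatment of the early steps exploiting the discrete smoothing of $(I+\tau A_h)^{-k}$ --- rather than a crude $\ell^1$ or $\ell^p$ summation of the truncation error.
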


The next lemma provides an error estimate of the scheme \eqref{eqn:fully-00} corresponding to
the coefficient $\mathcal{I}_hq^\dag$. It slightly relaxes the regularity assumption in
\cite[Lemma 6.1]{WangZou:2010} from $q^\dag\in W^{2,\infty}(\Omega)$ to $q^\dag\in W^{1,\infty}\II\cap H^2(\Omega)$.
The latter is identical with Assumption \ref{ass:elliptic}.
\begin{lemma}\label{lem:err-1}
Let $q^\dag$ be the exact diffusion coefficient, $u\equiv u(q^\dag)$ the solution to problem \eqref{eqn:var},
and  $\{U_h^n(\mathcal{I}_hq^\dag)\}$ the solutions to the scheme
\eqref{eqn:fully-00} with $\mathcal{I}_hq^\dag$. Then
under Assumption \ref{ass:data2}, 
\begin{equation*}
\begin{split}
\tau\sum_{n=1}^N\| u(t_n)-U_h^n(\mathcal{I}_h q^\dag)\|_{L^2(\Omega)}^2 &\le c(\tau^2  + h^4).
\end{split}
\end{equation*}
\end{lemma}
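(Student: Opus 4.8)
The plan is to reduce the claim to Lemma \ref{lem:err-parabolic}, the known $L^2(\Omega)$ bound for the fully discrete scheme with the exact coefficient. Write
\[
u(t_n)-U_h^n(\mathcal{I}_hq^\dag)=\big(u(t_n)-U_h^n(q^\dag)\big)+w_h^n,\qquad w_h^n:=U_h^n(q^\dag)-U_h^n(\mathcal{I}_hq^\dag).
\]
Lemma \ref{lem:err-parabolic} gives $\|u(t_n)-U_h^n(q^\dag)\|_{L^2(\Omega)}\le c(\tau+h^2)$, hence $\tau\sum_{n=1}^N\|u(t_n)-U_h^n(q^\dag)\|_{L^2(\Omega)}^2\le c(\tau^2+h^4)$, so it suffices to bound $\tau\sum_{n=1}^N\|w_h^n\|_{L^2(\Omega)}^2$ by the same quantity. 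Subtracting the two discrete problems (both with datum $P_hu_0$ and source $P_hf(t_n)$) shows $w_h^0=0$ and, for $n=1,\dots,N$,
\[
(\bar\partial_\tau w_h^n,\chi)+(\mathcal{I}_hq^\dag\nabla w_h^n,\nabla\chi)=-\big((q^\dag-\mathcal{I}_hq^\dag)\nabla U_h^n(q^\dag),\nabla\chi\big),\qquad\forall\,\chi\in X_h .
\]
I would test with $\chi=w_h^n$, and use $(\bar\partial_\tau w_h^n,w_h^n)\ge\tfrac12\bar\partial_\tau\|w_h^n\|_{L^2(\Omega)}^2$ together with $(\mathcal{I}_hq^\dag\nabla w_h^n,\nabla w_h^n)\ge c_0\|\nabla w_h^n\|_{L^2(\Omega)}^2$ (note $\mathcal{I}_hq^\dag\in\Uad_h$, since Lagrange interpolation preserves the bounds $c_0\le\cdot\le c_1$), so that everything hinges on a sharp estimate of the right-hand side.

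The crux is to bound $\big((q^\dag-\mathcal{I}_hq^\dag)\nabla U_h^n(q^\dag),\nabla w_h^n\big)$ \emph{without} assuming $q^\dag\in W^{2,\infty}(\Omega)$. To this end I would split $\nabla U_h^n(q^\dag)=\nabla u(t_n)+\nabla\big(U_h^n(q^\dag)-u(t_n)\big)$ and pair each piece with the interpolation error measured in the appropriate norm. For the first piece, use $\|q^\dag-\mathcal{I}_hq^\dag\|_{L^2(\Omega)}\le ch^2\|q^\dag\|_{H^2(\Omega)}$ (from \eqref{eqn:int-err-inf} with $s=p=2$, admissible since $sp=4>d$) and $\|\nabla u(t_n)\|_{L^\infty(\Omega)}\le c$ from \eqref{reg-parabolic-inf}, which gives a bound $ch^2\|\nabla w_h^n\|_{L^2(\Omega)}$. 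For the second piece, use $\|q^\dag-\mathcal{I}_hq^\dag\|_{L^\infty(\Omega)}\le ch\|q^\dag\|_{W^{1,\infty}(\Omega)}$ (from \eqref{eqn:int-err-inf} with $s=1$, $p=\infty$), together with the inverse inequality on $X_h$ applied to $U_h^n(q^\dag)-P_hu(t_n)$ and \eqref{eqn:proj-L2-error}:
\[
\|\nabla\big(U_h^n(q^\dag)-u(t_n)\big)\|_{L^2(\Omega)}\le ch^{-1}\|U_h^n(q^\dag)-P_hu(t_n)\|_{L^2(\Omega)}+ch\le c(\tau h^{-1}+h),
\]
where the last step uses Lemma \ref{lem:err-parabolic} and \eqref{eqn:proj-L2-error} once more; hence this piece is bounded by $c(\tau+h^2)\|\nabla w_h^n\|_{L^2(\Omega)}$. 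Altogether the right-hand side is controlled by $c(\tau+h^2)\|\nabla w_h^n\|_{L^2(\Omega)}$.

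Inserting this into the tested identity, multiplying by $2\tau$ and summing over $n=1,\dots,N$ (with $w_h^0=0$) gives $\|w_h^N\|_{L^2(\Omega)}^2+2c_0\,\tau\sum_n\|\nabla w_h^n\|_{L^2(\Omega)}^2\le c(\tau+h^2)\,\tau\sum_n\|\nabla w_h^n\|_{L^2(\Omega)}$; by Cauchy--Schwarz in $n$ the last sum is $\le\sqrt T\,\big(\tau\sum_n\|\nabla w_h^n\|_{L^2(\Omega)}^2\big)^{1/2}$, so with $X:=\big(\tau\sum_n\|\nabla w_h^n\|_{L^2(\Omega)}^2\big)^{1/2}$ one gets $X\le c(\tau+h^2)$, i.e. $\tau\sum_n\|\nabla w_h^n\|_{L^2(\Omega)}^2\le c(\tau^2+h^4)$. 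The Poincar\'e inequality on $X_h\subset H_0^1(\Omega)$ then yields $\tau\sum_n\|w_h^n\|_{L^2(\Omega)}^2\le c\,\tau\sum_n\|\nabla w_h^n\|_{L^2(\Omega)}^2\le c(\tau^2+h^4)$, and combining with the bound for $u(t_n)-U_h^n(q^\dag)$ gives the assertion. I expect the main obstacle to be precisely the estimate of $\big((q^\dag-\mathcal{I}_hq^\dag)\nabla U_h^n(q^\dag),\nabla w_h^n\big)$: using only $\|q^\dag-\mathcal{I}_hq^\dag\|_{L^\infty(\Omega)}=O(h)$ loses a power of $h$ and yields merely $O(h^2)$, while the $L^2(\Omega)$ interpolation bound $O(h^2)$ is not directly usable because neither $\nabla U_h^n(q^\dag)$ nor $\nabla w_h^n$ is controlled in $L^\infty(\Omega)$; the splitting above—pairing $O(h)$ only with the \emph{small} discrete error $U_h^n(q^\dag)-u(t_n)=O(\tau+h^2)$ (via the inverse inequality) and $O(h^2)$ with the bounded $\nabla u(t_n)$—is exactly what makes the weaker hypothesis $q^\dag\in H^2(\Omega)\cap W^{1,\infty}(\Omega)$ sufficient, the underlying bound of Lemma \ref{lem:err-parabolic} itself resting on discrete maximal $L^p$ regularity.
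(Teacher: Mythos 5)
Your proof is correct, but it takes a genuinely different route from the paper's. The paper rewrites both discrete schemes in the lifted form $A_h(\cdot)^{-1}\bar\partial_\tau U+U=A_h(\cdot)^{-1}P_hf$, so that the difference $\rho_h^n=U_h^n(q^\dag)-U_h^n(\mathcal{I}_hq^\dag)$ is driven by $(A_h(q^\dag)^{-1}-A_h(\mathcal{I}_hq^\dag)^{-1})\big(P_hf(t_n)-\bar\partial_\tau U_h^n(\mathcal{I}_hq^\dag)\big)$; the $O(h^2)$ gain then comes from the operator-norm perturbation bound $\|A_h(\mathcal{I}_hq^\dag)^{-1}-A_h(q^\dag)^{-1}\|_{L^p(\Omega)\to L^2(\Omega)}\le ch^2$ of Lemma \ref{lem:est-01}, and the resulting source term $\bar\partial_\tau U_h^n(\mathcal{I}_hq^\dag)$ is paid for by discrete maximal $\ell^p$ regularity of the backward Euler scheme. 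You instead run a plain energy argument on the un-lifted error equation and recover the $O(h^2)$ gain through the splitting $\nabla U_h^n(q^\dag)=\nabla u(t_n)+\nabla\big(U_h^n(q^\dag)-u(t_n)\big)$, pairing the $L^2(\Omega)$ interpolation error $O(h^2)$ with the $L^\infty$-bounded $\nabla u(t_n)$ and the $L^\infty(\Omega)$ interpolation error $O(h)$ with the small discrete gradient error $O(\tau h^{-1}+h)$ obtained from Lemma \ref{lem:err-parabolic} and an inverse inequality; this is essentially the same device the paper uses \emph{inside} the proof of Lemma \ref{lem:est-01}, but deployed directly at the parabolic level. Your route is more self-contained for this lemma --- it needs neither the resolvent perturbation estimate nor the maximal $\ell^p$ regularity citation --- at the price of invoking the uniform-in-$n$ bound of Lemma \ref{lem:err-parabolic} both for the reduction and inside the gradient estimate; both arguments use only $q^\dag\in H^2(\Omega)\cap W^{1,\infty}(\Omega)$, which is the point of the lemma. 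The individual steps (the error equation, the coercivity of $\mathcal{I}_hq^\dag\ge c_0$, the kick-back via Cauchy--Schwarz in $n$, and the Poincar\'e inequality on $X_h$) all check out.
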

\begin{proof}
Note that $U_h^n(q^\dag)$ and  $U_h^n(\mathcal{I}_h q^\dag)$  respectively satisfy
\begin{align*}
  A_h(q^\dag)^{-1} \bar \partial_\tau U_h^n(q^\dag) + U_h^n(q^\dag) &= A_h(q^\dag)^{-1} P_h f(t_n),\quad n=1,2\ldots,N,\\
 A_h(\mathcal{I}_h q^\dag)^{-1} \bar \partial_\tau  U_h^n(\mathcal{I}_h  q^\dag) + {U_h^n(\mathcal{I}_h q^\dag)} &= A_h(\mathcal{I}_h  q^\dag)^{-1} P_h f(t_n),\quad n=1,2,\ldots, N,
\end{align*}
with $U_h^0(q^\dag) = U_h^0(\mathcal{I}_hq^\dag) = P_h u_0$. Hence, $\rho_h^n:=U_h^n(q^\dag)  - U_h^n(\mathcal{I}_h q^\dag)$ satisfies
\begin{equation}\label{eqn:err-eq-1}
 A_h(q^\dag)^{-1}  \bar \partial_\tau \rho_h^n + \rho_h^n = (A_h(q^\dag)^{-1}  -
  A_h(\mathcal{I}_h  q^\dag)^{-1}) \big(P_h f(t_n) - \bar \partial_\tau U_h^n(\mathcal{I}_h  q^\dag) \big), \quad n=1,\ldots,N,
\end{equation}
with $\rho_h^0 = 0$. It follows from direct computation that
{\begin{equation*}
\begin{split}
(\bar\partial_\tau A_h(q^\dag)^{-1} \rho_h^n , \rho_h^n)
&= (\bar\partial_\tau A_h(q^\dag)^{-\frac12} \rho_h^n , A_h(q^\dag)^{-\frac12} \rho_h^n) \\
&= \tfrac12\bar
\partial_\tau\|  A_h(q^\dag)^{-\frac12} \rho_h^n \|_{L^2\II}^2 + \tfrac1{2\tau} \|  A_h(q^\dag)^{-\frac12} (\rho_h^n -\rho_h^{n-1}) \|_{L^2\II}^2\\
&\ge \tfrac12\bar
\partial_\tau\|  A_h(q^\dag)^{-\frac12} \rho_h^n \|_{L^2\II}^2,
\end{split}
\end{equation*}}
Then taking inner product \eqref{eqn:err-eq-1} with $\rho_h^n$ and by the Cauchy--Schwarz inequality, we obtain
\begin{equation*}
\begin{split}
 & \tfrac12 \bar\partial_\tau \| A_h(q^\dag)^{-\frac12}  \rho_h^n \|_{L^2(\Omega)}^2 +  \|  \rho_h^n\|_{L^2(\Omega)}^2\\
  \le&  \|   (A_h(q^\dag)^{-1}  - A_h(\mathcal{I}_h  q^\dag)^{-1})  \big(P_h f(t_n) - \bar \partial_\tau U_h^n(\mathcal{I}_h  q^\dag)\big)\|_{L^2(\Omega)} \| \rho_h^n  \|_{L^2(\Omega)}\\
  \le&\tfrac12  \|   (A_h(q^\dag)^{-1}  - A_h(\mathcal{I}_h  q^\dag)^{-1})  \big(P_h f(t_n) - \bar \partial_\tau U_h^n(\mathcal{I}_h  q^\dag)\big)\|_{L^2(\Omega)}^2
  +\tfrac12\| \rho_h^n  \|_{L^2(\Omega)}^2.
\end{split}
\end{equation*}
Further, by Lemma \ref{lem:est-01}, we have for any $\epsilon>0$ and $p\ge \max(d+\epsilon,2)$,
\begin{equation*}
   \| A_h(\mathcal{I}_h q^\dag)^{-1} - A_h(q^\dag)^{-1}  \|_{L^p(\Omega)\rightarrow L^2(\Omega)} \le c h^2.
\end{equation*}
Hence, for $p\ge \max(d+\epsilon,2)$,
{\begin{equation*}
\begin{split}
   \bar\partial_\tau \| A_h(q^\dag)^{-\frac12}  \rho_h^n \|_{L^2(\Omega)}^2 + \|  \rho_h^n\|_{L^2(\Omega)}^2
&\le ch^4 \| {P_h  f(t_n)} - \bar \partial_\tau U_h^n(\mathcal{I}_h  q^\dag) \|_{L^p(\Omega)}^{2} \\
&\le ch^4 \|  f(t_n) - \bar \partial_\tau U_h^n(\mathcal{I}_h  q^\dag) \|_{L^p(\Omega)}^{2},
\end{split}
\end{equation*}
where in the second line we have used the $L^p(\Omega)$ stability of $P_h$ \cite{DouglasDupontWahlbin:1974}.}
Then, summing over $n$ gives
\begin{equation*}
   \| A_h(q^\dag)^{-\frac12}  \rho_h^N \|_{L^2(\Omega)}^2 +  \tau \sum_{n=1}^N \|  \rho_h^n\|_{L^2(\Omega)}^2\le
  ch^4 \Big(\tau \sum_{n=1}^N  \|  f(t_n) \|_{L^p(\Omega)}^2  + \tau \sum_{n=1}^N  \| \bar \partial_\tau U_h^n(\mathcal{I}_h  q^\dag) \|_{L^p(\Omega)} ^2\Big).
\end{equation*}
Then the maximal $\ell^p$ regularity for the backward Euler scheme \cite{Ashyralyev:2002} implies
\begin{equation*}
  \tau \sum_{n=1}^N \|  \rho_h^n\|_{L^p(\Omega)}^2\le
  ch^4 \Big(\tau \sum_{n=1}^N  \|   f(t_n) \|_{L^p(\Omega)}^2  +  \| \nabla u_0 \|_{L^p(\Omega)} ^2\Big).
\end{equation*}
Finally, the desired estimate follows from Lemma \ref{lem:err-parabolic} and the {triangle inequality}.
\end{proof}

The next result gives \textit{a priori} bounds on $q_h^*$ and error estimates on
the corresponding approximations $U_h^n(q_h^*)$. This result will play a crucial role in the proof
of Theorem \ref{thm:error-q} below.
\begin{lemma}\label{lem:err-2}
Let $q^\dag$ be the exact coefficient and $u\equiv u(q^\dag)$ the solution to problem \eqref{eqn:var}.
Let $q_h^*\in \Uad_h$ be the solution to problem \eqref{eqn:ob-disc-1}--\eqref{eqn:fully-1}, and
$\{U_h^n(q_h^*)\}_{n=1}^N$ the fully discrete solution to problem \eqref{eqn:fully-1}. Then under
Assumption \ref{ass:data2}, there holds
\begin{equation*}
  \tau \sum_{n=N_\sigma}^N \| U_h^n(q_h^*) - u(t_n) \|_{L^2(\Omega)}^2 + \gamma \| \nabla q_h^* \|_{L^2(\Omega)}^2
  \le c  (\tau^{2} + h^4  + \delta^2+\gamma).
\end{equation*}
\end{lemma}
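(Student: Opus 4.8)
The plan is to mimic the classical energy argument for discrete minimizers in Tikhonov regularization: exploit the optimality of $q_h^*$ against a well-chosen competitor, namely $\mathcal{I}_hq^\dag\in\Uad_h$, and then bound the mismatch terms using the auxiliary estimates already in place. Concretely, since $q_h^*$ minimizes $J_{\gamma,h,\tau}$ over $\Uad_h$ and $\mathcal{I}_hq^\dag\in\Uad_h$ (this requires $q^\dag\in\Uad$ and the fact that Lagrange interpolation preserves the box constraint for piecewise linears), I would start from
\begin{equation*}
  \tau\sum_{n=N_\sigma}^N\|U_h^n(q_h^*)-z_n^\delta\|_{L^2(\Omega)}^2 + \tfrac\gamma2\|\nabla q_h^*\|_{L^2(\Omega)}^2
   \le \tau\sum_{n=N_\sigma}^N\|U_h^n(\mathcal{I}_hq^\dag)-z_n^\delta\|_{L^2(\Omega)}^2 + \tfrac\gamma2\|\nabla \mathcal{I}_hq^\dag\|_{L^2(\Omega)}^2.
\end{equation*}
The second term on the right is $O(\gamma)$ by the $W^{1,2}$-stability of $\mathcal{I}_h$ together with $q^\dag\in H^1(\Omega)$ (indeed $q^\dag\in W^{1,\infty}(\Omega)$). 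For the first term on the right, I insert $u(t_n)$ and apply the triangle inequality, then use Lemma~\ref{lem:err-1} to control $\tau\sum_n\|u(t_n)-U_h^n(\mathcal{I}_hq^\dag)\|_{L^2(\Omega)}^2\le c(\tau^2+h^4)$ and Lemma~\ref{lem:noise-p} to control $\tau\sum_n\|u(t_n)-z_n^\delta\|_{L^2(\Omega)}^2\le c(\tau^2+\delta^2)$; hence the whole right-hand side is $\le c(\tau^2+h^4+\delta^2+\gamma)$.

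The remaining step is to pass from the data-fidelity term $\tau\sum_n\|U_h^n(q_h^*)-z_n^\delta\|_{L^2(\Omega)}^2$ on the left to the desired quantity $\tau\sum_n\|U_h^n(q_h^*)-u(t_n)\|_{L^2(\Omega)}^2$. This is again a triangle-inequality manoeuvre in reverse: $\|U_h^n(q_h^*)-u(t_n)\|\le \|U_h^n(q_h^*)-z_n^\delta\|+\|z_n^\delta-u(t_n)\|$, square and sum, and absorb the cross term with Young's inequality, using Lemma~\ref{lem:noise-p} once more for $\tau\sum_n\|u(t_n)-z_n^\delta\|_{L^2(\Omega)}^2\le c(\tau^2+\delta^2)$. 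Combining the two chains of inequalities and discarding the nonnegative $\gamma\|\nabla q_h^*\|_{L^2(\Omega)}^2$ when bounding the fidelity part (and vice versa when bounding the penalty part) yields both summands of the claimed estimate simultaneously.

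The main obstacle is not the energy bookkeeping but ensuring the auxiliary ingredients are genuinely available at the stated regularity: specifically, that Lemma~\ref{lem:err-1}'s bound $\tau\sum_n\|u(t_n)-U_h^n(\mathcal{I}_hq^\dag)\|_{L^2(\Omega)}^2\le c(\tau^2+h^4)$ holds only under Assumption~\ref{ass:data2}, which it does by construction of that lemma. A secondary subtlety is that the competitor $\mathcal{I}_hq^\dag$ must lie in $\Uad_h$; for continuous piecewise-linear interpolation on a simplicial mesh this is immediate because nodal values of $q^\dag$ respect $c_0\le q^\dag\le c_1$ and the interpolant is a convex combination of nodal values on each simplex. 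Once these are in hand, the proof is a short concatenation of the preceding lemmas with two applications of Young's inequality; no new estimate is needed, and in particular no source condition and no extra regularity on $z^\delta$ beyond $L^2(T-\sigma,T;L^2(\Omega))$ enters.
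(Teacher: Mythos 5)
Your proposal is correct and follows essentially the same route as the paper: compare $J_{\gamma,h,\tau}(q_h^*)$ with $J_{\gamma,h,\tau}(\mathcal{I}_hq^\dag)$, then shuttle between $z_n^\delta$ and $u(t_n)$ via the triangle inequality, invoking Lemma \ref{lem:err-1} for $\tau\sum_n\|u(t_n)-U_h^n(\mathcal{I}_hq^\dag)\|_{L^2(\Omega)}^2$, Lemma \ref{lem:noise-p} for the data term, and $\|\nabla\mathcal{I}_hq^\dag\|_{L^2(\Omega)}\le c$ from $q^\dag\in W^{1,\infty}(\Omega)$. Your explicit verification that $\mathcal{I}_hq^\dag\in\Uad_h$ (nodal values respect the box constraint and the piecewise-linear interpolant is a convex combination of them) is a detail the paper leaves implicit, and is a welcome addition.
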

\begin{proof}
By the minimizing property of $q_h^*\in \Uad_h$, since $\mathcal{I}_h q^\dag \in \Uad_h$, we deduce
\begin{equation*}
J_{\gamma,h,\tau}(q_h^*)\leq J_{\gamma,h,\tau}(\mathcal{I}_hq^\dag).
\end{equation*}
By the triangle inequality, we derive
\begin{align*}
\tau \sum_{n=N_\sigma}^N \|U_h^n(q_h^*) -  u(t_n)\|_{L^2(\Omega)}^2
 &\le  c \tau \sum_{n=N_\sigma}^N \|U_h^n(q_h^*) - z_n^\delta\|_{L^2(\Omega)}^2
  + c \tau \sum_{n=N_\sigma}^N  \| z_n^\delta-u(t_n)\|_{L^2(\Omega)}^2 .
\end{align*}
These two inequalities together imply
\begin{align*}
   & \tau \sum_{n=N_\sigma}^N \| U_h^n(q_h^*) - u(t_n) \|_{L^2(\Omega)}^2 + \gamma \| \nabla q_h^* \|_{L^2(\Omega)}^2 \\
  \leq & c\tau \sum_{n=N_\sigma}^N \| U_h^n(\mathcal{I}_h q^\dag) - z^\delta_n\|_{L^2(\Omega)}^2 + c\gamma\| \nabla \mathcal{I}_hq^\dag\|_{L^2(\Omega)}^2 + c \tau \sum_{n=N_\sigma}^N  \| z^\delta_n-u(t_n) \|_{L^2(\Omega)}^2\\
  \leq &  c \tau \sum_{n=N_\sigma}^N \| U_h^n(\mathcal{I}_h q^\dag) - u(t_n)\|_{L^2(\Omega)}^2
  +   c\gamma\| \nabla \mathcal{I}_hq^\dag\|_{L^2(\Omega)}^2 + c \tau \sum_{n=N_\sigma}^N  \| z^\delta_n-u(t_n) \|_{L^2(\Omega)}^2\\
  \leq &  c \tau \sum_{n=N_\sigma}^N \| U_h^n(\mathcal{I}_h q^\dag) - u(t_n)\|_{L^2(\Omega)}^2
  +   c\gamma\| \nabla \mathcal{I}_hq^\dag\|_{L^2(\Omega)}^2 + c(\delta^2 + \tau^2),
\end{align*}
where the last line follows from Lemma \ref{lem:noise-p}.
Since $q^\dag\in W^{1,\infty}(\Omega)$ by Assumption
\ref{ass:data2}, $\|\nabla \mathcal{I}_hq^\dag\|_{L^2(\Omega)}\leq c$, cf. \eqref{eqn:int-err-inf}.
Combining the preceding estimates with Lemma \ref{lem:err-1} completes the proof.
\end{proof}

Now we give the main result of this section, i.e., error estimate of the numerical approximation
$q_h^*\in \Uad_h$, with the weight involving $q^\dag|\nabla u(t_n)  |^2 +( f(t_n) -\partial_t u(t_n))u(t_n)$.
whose positivity will be analyzed below in Section \ref{ssec:positive}.
\begin{theorem}\label{thm:error-q}
Let $q^\dag$ be the exact diffusion coefficient, $u\equiv u(q^\dag)$ the solution to problem \eqref{eqn:var},
and $q_h^*\in \Uad_h$ a solution to problem \eqref{eqn:ob-disc-1}--\eqref{eqn:fully-1}. Then
under Assumption \ref{ass:data2}, with $\eta= \tau + h^2  + \delta + \gamma^{\frac12}$, there holds
\begin{align*}
 &\tau^3 \sum_{j=N_\sigma+1}^N \sum_{i=N_\sigma+1}^j \sum_{n=i}^j  \int_\Omega \Big(\frac{q^\dag-q_h^*}{q^\dag}\Big)^2 \Big(q^\dag|\nabla u(t_n)  |^2 +( f(t_n) -\partial_t  u(t_n))u(t_n)\Big)\,\d x\\
 \le& {c(h \gamma^{-\frac12}\eta+  \min(1,h^{-1}\eta))\gamma^{-\frac12}\eta}.
\end{align*}
\end{theorem}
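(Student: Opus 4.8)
The plan is to run the argument of Theorem~\ref{thm:ellip-1} at each discrete time level $t_n$, and then to recover the information lost in the time direction by averaging over two auxiliary indices --- this is exactly what the triple sum in the statement encodes. Throughout write $u=u(q^\dag)$, $g=\frac{q^\dag-q_h^*}{q^\dag}$ and $r_n=\|U_h^n(q_h^*)-u(t_n)\|_{L^2(\Omega)}$, and recall from Lemma~\ref{lem:err-2} that $\tau\sum_{n=N_\sigma}^N r_n^2\le c\eta^2$ and $\|\nabla q_h^*\|_{L^2(\Omega)}^2\le c\gamma^{-1}\eta^2$. For each $n$ with $N_\sigma\le n\le N$ I take the test function $\fy_n=g\,u(t_n)$; since $q^\dag\ge c_0$, $q^\dag\in W^{1,\infty}(\Omega)$, $q_h^*\in V_h$ and, by \eqref{reg-parabolic-inf} together with the bound $\|u(t)\|_{H^2(\Omega)}\le c$ noted after \eqref{reg-parabolic-2}, $u(t_n)\in H^2(\Omega)\cap H_0^1(\Omega)\cap W^{1,\infty}(\Omega)$ uniformly in $n$, one has $\fy_n\in H_0^1(\Omega)$ with $\|\nabla\fy_n\|_{L^2(\Omega)}\le c(1+\|\nabla q_h^*\|_{L^2(\Omega)})$. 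Using the governing identity $-\nabla\cdot(q^\dag\nabla u(t_n))=f(t_n)-\partial_t u(t_n)$ (which replaces $f=-\nabla\cdot(q^\dag\nabla u)$ of the elliptic case) and repeating verbatim the integration-by-parts computation at the end of the proof of Theorem~\ref{thm:ellip-1} gives, for every such $n$,
\begin{equation*}
((q^\dag-q_h^*)\nabla u(t_n),\nabla\fy_n)=\tfrac12\int_\Omega g^2\big(q^\dag|\nabla u(t_n)|^2+(f(t_n)-\partial_t u(t_n))u(t_n)\big)\,\d x ,
\end{equation*}
so it suffices to bound $\tau^3\sum_{j=N_\sigma+1}^N\sum_{i=N_\sigma+1}^j\sum_{n=i}^j((q^\dag-q_h^*)\nabla u(t_n),\nabla\fy_n)$ from above.

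Next, fix $N_\sigma+1\le i\le j\le N$ and split $((q^\dag-q_h^*)\nabla u(t_n),\nabla\fy_n)={\rm I}_1^n+{\rm I}_2^n$ exactly as in Theorem~\ref{thm:ellip-1}, with ${\rm I}_1^n=((q^\dag-q_h^*)\nabla u(t_n),\nabla(\fy_n-P_h\fy_n))$ and ${\rm I}_2^n=((q^\dag-q_h^*)\nabla u(t_n),\nabla P_h\fy_n)$. The term ${\rm I}_1^n$ is estimated exactly as in Theorem~\ref{thm:ellip-1} --- integration by parts, the uniform regularity of $u(t_n)$, the box constraint, \eqref{eqn:proj-L2-error} and Lemma~\ref{lem:err-2} --- giving $|{\rm I}_1^n|\le ch(1+\|\nabla q_h^*\|_{L^2(\Omega)})^2\le ch\gamma^{-1}\eta^2$ uniformly in $n$; summing $\tau$ over $n\in[i,j]$ and then $\tau^2$ over $(i,j)$, and using $\tau N\le T$, contributes $ch\gamma^{-1}\eta^2$ to the bound. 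For ${\rm I}_2^n$, testing the continuous equation \eqref{eqn:var} at $t=t_n$ and the fully discrete scheme \eqref{eqn:fully-1} against $P_h\fy_n\in X_h$ and subtracting yields
\begin{equation*}
{\rm I}_2^n=(\bPtau U_h^n(q_h^*)-\partial_t u(t_n),P_h\fy_n)+(q_h^*\nabla(U_h^n(q_h^*)-u(t_n)),\nabla P_h\fy_n)=:{\rm I}_{2,a}^n+{\rm I}_{2,b}^n .
\end{equation*}
For ${\rm I}_{2,b}^n$ I use $H^1(\Omega)$-stability of $P_h$ on the quasi-uniform mesh, the inverse inequality on $X_h$ applied to $U_h^n(q_h^*)-P_hu(t_n)$, \eqref{eqn:proj-L2-error}, the standard stability bounds $\|\nabla U_h^n(q_h^*)\|_{L^2(\Omega)}\le c$ (uniform in the coefficient) and $\|\nabla u(t_n)\|_{L^2(\Omega)}\le c$, and Lemma~\ref{lem:err-2}, to get $\|\nabla(U_h^n(q_h^*)-u(t_n))\|_{L^2(\Omega)}\le c\min(h^{-1}r_n+h,1)$; hence by Cauchy--Schwarz in time $\tau\sum_{n=i}^j|{\rm I}_{2,b}^n|\le c\min(h^{-1}\eta,1)(1+\|\nabla q_h^*\|_{L^2(\Omega)})\le c\min(h^{-1}\eta,1)\gamma^{-\frac12}\eta$, uniformly in $(i,j)$, which after the remaining $\tau^2$-averaging contributes $c\min(h^{-1}\eta,1)\gamma^{-\frac12}\eta$ to the bound.

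The term ${\rm I}_{2,a}^n$ is the crux. Writing $U_h^n(q_h^*)=\theta_h^n+P_hu(t_n)$ with $\theta_h^n\in X_h$, and using that $\bPtau\theta_h^n\in X_h$ while $(I-P_h)\bPtau u(t_n)\perp X_h$, one finds ${\rm I}_{2,a}^n=(\bPtau\theta_h^n,\fy_n)+(\bPtau u(t_n)-\partial_t u(t_n),P_h\fy_n)$; the consistency term is $O(\tau)$ in $L^2(\Omega)$ because $t\|\partial_{tt}u(t)\|_{L^2(\Omega)}\le c$ by \eqref{reg-parabolic-2} and $t_{n-1}\ge T-\sigma>0$ for $n\ge N_\sigma+1$, so it contributes at most $c\tau\le c\eta$ after all summations. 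For the remaining part I sum by parts in $n$, using $\fy_{n+1}-\fy_n=\tau g\,\bPtau u(t_{n+1})$:
\begin{equation*}
\sum_{n=i}^j\tau(\bPtau\theta_h^n,\fy_n)=(\theta_h^j,\fy_j)-(\theta_h^{i-1},\fy_i)-\tau\sum_{n=i}^{j-1}(\theta_h^n,g\,\bPtau u(t_{n+1})),
\end{equation*}
and since $\|\theta_h^n\|_{L^2(\Omega)}\le r_n+ch^2$, $\|\fy_n\|_{L^2(\Omega)}\le c$ and $\|\bPtau u(t_{n+1})\|_{L^2(\Omega)}\le\|\partial_t u\|_{L^\infty(0,T;L^2(\Omega))}\le c$ by \eqref{reg-parabolic-2}, the right-hand side is $\le c(r_j+r_{i-1}+\eta)$.

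Here the pointwise quantities $r_j,r_{i-1}$ appear, which Lemma~\ref{lem:err-2} controls only in the $\ell^2$-in-time sense; the remedy is the extra averaging. Multiplying the last display by $\tau^2$ and summing over $i\in[N_\sigma+1,j]$ and $j\in[N_\sigma+1,N]$, each of $\tau^2\sum_j\sum_{i\le j}r_j$, $\tau^2\sum_j\sum_{i\le j}r_{i-1}$ and $\tau^2\sum_j\sum_{i\le j}\eta$ is $\le c\tau\sum_{n=N_\sigma}^N r_n+c\eta\le c\eta$ by Cauchy--Schwarz in time together with $\tau\sum_{n=N_\sigma}^N r_n^2\le c\eta^2$ and $\tau N\le T$. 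Collecting the contributions of ${\rm I}_1^n$ (giving $ch\gamma^{-1}\eta^2$), ${\rm I}_{2,b}^n$ (giving $c\min(h^{-1}\eta,1)\gamma^{-\frac12}\eta$) and ${\rm I}_{2,a}^n$ (giving $c\eta$), and absorbing the $c\eta$ term using $\eta\ge\gamma^{\frac12}$ and $h\le1$ (so that $\eta\le c\min(1,h^{-1}\eta)\gamma^{-\frac12}\eta$ for $h,\gamma$ small), yields the asserted estimate. The main obstacle --- and the only genuinely new point compared with Theorem~\ref{thm:ellip-1} --- is this treatment of ${\rm I}_{2,a}^n$: the discrete time derivative of the state error admits no pointwise-in-time bound, so the summation by parts in $n$ must be coupled with two further layers of averaging, which is precisely the role of the triple sum in the statement.
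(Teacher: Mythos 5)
Your proposal is correct and follows essentially the same route as the paper: the same test function $\fy^n=\frac{q^\dag-q_h^*}{q^\dag}u(t_n)$ and pointwise identity, the same three-way splitting (your ${\rm I}_1^n$, ${\rm I}_{2,b}^n$, ${\rm I}_{2,a}^n$ are algebraically identical to the paper's ${\rm I}_1^n$, ${\rm I}_2^n$, ${\rm I}_3^n$, with $(\bar\partial_\tau\theta_h^n,\fy_n)$ equal to the paper's $(\bar\partial_\tau[U_h^n(q_h^*)-u(t_n)],P_h\fy^n)$), and the same summation by parts coupled with the double $\tau^2$-averaging to reduce the boundary terms to the $\ell^2$-in-time bound of Lemma \ref{lem:err-2}. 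Your closing observation about the role of the triple sum matches the paper's own remark following Corollary \ref{cor:error-q}.
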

\begin{proof}
For any test function $\fy\in H_0^1(\Omega)$, we have
 \begin{align*}
 ((q^\dag-q_h^*)\nabla u(t_n),\nabla\fy)&=-(\nabla\cdot((q^\dag-q_h^*)\nabla u(t_n)), \fy-P_h\fy ) + (q_h^*\nabla (U_h^n(q_h^*) - u(t_n)),\nabla P_h\fy) \nonumber\\
   & \quad + (q^\dag\nabla u(t_n) - q_h^*\nabla U_h^n(q_h^*),\nabla P_h\fy) =\sum_{i=1}^3{\rm I}_i^n.
\end{align*}
Throughout, the test function $\fy$ is taken to be $\fy\equiv \fy^n=\frac{q^\dag-q_h^*}{q^\dag} u(t_n)$.
Then repeating the argument in Theorem \ref{thm:ellip-1} with the regularity estimates \eqref{reg-parabolic} and \eqref{reg-parabolic-inf} and
the approximation property of $P_h$ in \eqref{eqn:proj-L2-error} yields
\begin{equation}\label{eqn:fyn-bdd}
  \|\nabla\fy^n\|_{L^2(\Omega)} 
  \le c(1+\|\nabla q_h^*\|_{L^2(\Omega)})\quad\text{and}\quad
  \| P_h \fy^n - \fy^n \|_{L^2(\Omega)}\le ch(1+\|\nabla q_h^*\|_{L^2(\Omega)}).
\end{equation}
Next we bound the three terms separately. By
Assumption \ref{ass:data2} (and hence the regularity estimates \eqref{reg-parabolic} and \eqref{reg-parabolic-inf}),
and the box constraint of $q^\dag$ and $q_h^*$, the term ${\rm I}_1^n$ is bounded by
\begin{align*}
  |{\rm I}_1^n| & \le  ch (1+\| \nabla q_h ^*\|_{L^2(\Omega)} )^2
  \le  c  h(1+\gamma^{-1}\eta^2) \le  c  h \gamma^{-1}\eta^2,
\end{align*}
For the term ${\rm I}_2^n$, by the triangle inequality, inverse inequality, $L^2(\Omega)$ stability of the operator
$P_h$ in \eqref{eqn:proj-L2-error}, we deduce
\begin{align*}
\|  \nabla(u(t_n) - U_h^n(q_h^*)) \|_{L^2(\Omega)} & \leq \|  \nabla(u(t_n) - P_hu(t_n) ) \|_{L^2(\Omega)} + h^{-1}\|  P_h u(t_n)  - U_h^n(q_h^*)  \|_{L^2(\Omega)}\\
  & \leq c(h + h^{-1}\|P_hu(t_n) - U_h^n(q_h^*)   \|_{L^2(\Omega)}).
\end{align*}
Meanwhile, by the energy argument in Lemma \ref{lem:err-1}, we deduce
\begin{equation*}
  \tau\sum_{n=1}^N\|\nabla U_h^n(q_h^*)\|_{L^2(\Omega)} ^2\leq c\Big(\sum_{n=1}^N\|f(t_n)\|_{L^2(\Omega)}^2+\|\nabla u_0\|_{L^2(\Omega)}^2\Big)\leq c.
\end{equation*}
This and the regularity estimate \eqref{reg-parabolic-2}, $\tau\sum_n\|\nabla(u(t_n)-U_h^n(q_h^*))\|_{L^2(\Omega)}^2\leq c$.
Consequently, the Cauchy-Schwarz inequality, Lemma \ref{lem:err-2}
and \eqref{eqn:fyn-bdd} imply
\begin{align*}
  \tau \sum_{n=N_\sigma}^N  {\rm I}_2^n & \le  \tau \sum_{n=1}^N \|  \nabla(u(t_n) - U_h^n(q_h^*)) \|_{L^2(\Omega)} \|  \nabla \fy^n \|_{L^2(\Omega)}\\
  &\le c \min\Big(1,h+h^{-1}\Big(\tau \sum_{n=N_\sigma}^N \|  u(t_n) - U_h^n(q_h^*)  \|_{L^2(\Omega)}^2\Big)^{\frac12}\Big) (1+\|\nabla q_h^* \|_{L^2(\Omega)})\\
  &\le {c \min(1,h+h^{-1} \eta)\gamma^{-\frac12} \eta\leq c \min(1,h^{-1} \eta)\gamma^{-\frac12}{\eta}.}
\end{align*}
Next we bound the term ${\rm I}_3^n$. It follows from the variational formulations \eqref{eqn:var} and \eqref{eqn:fully-1} that
\begin{align*}
 {\rm I}_3^n &=   (q^\dag\nabla u(t_n) - q_h^*\nabla U_h^n(q_h^*),\nabla P_h\fy^n ) \\
   &= (\bar \partial_\tau  U_h^n(q_h^*)  - \partial_t u(t_n)  , P_h\fy^n )\\
 &=(\bar \partial_\tau  [ U_h^n(q_h^*) -   u(t_n) ],  P_h\fy^n )
 + ( \bar \partial_\tau  u(t_n)  - \partial_t u(t_n)  , P_h\fy^n )=:  {\rm I}_{3,1}^n + {\rm I}_{3,2}^n.
\end{align*}
It remains to bound the two terms ${\rm I}_{3,1}^n$ and ${\rm I}_{3,2}^n$ separately.
Note that
\begin{equation*}
  \bar\partial_\tau u(t_n) -\partial_t u(t_n)  = {\tau^{-1}\int_{t_{n-1}}^{t_n}\partial_su(s) - \partial_tu(t_n)\d s.}
\end{equation*}
Thus, by the regularity estimate \eqref{reg-parabolic-2}, we have for $n\ge 2$
\begin{align*}
  \| \bar \partial_\tau u(t_n)   - \partial_t u(t_n) \|_{L^2(\Omega)}  \le \frac1\tau \int_{t_{n-1}}^{t_n} \int_s^{t_n}
  \|u''(\xi)\|_{L^2(\Omega)}\,\d \xi \,\d s \le c \tau t_{n-1}^{-1} \le c \tau t_{n}^{-1},
\end{align*}
and for $n=1$,
\begin{equation*}
\| \bar \partial_\tau u(\tau)   - \partial_t u(\tau) \|_{L^2(\Omega)}\le c.
\end{equation*}
Consequently, there holds
\begin{equation*}
|{\rm  I}_{3,2}^n| \le  \| \bar \partial_\tau u(t_n)   - \partial_t u(t_n) \|_{L^2(\Omega)}  \| P_h\fy^n \|_{L^2(\Omega)}
           \le c\tau t_n^{-1}, \quad n=1,2,\ldots, N,
\end{equation*}
and
\begin{equation*}
 \Big|\tau^3 \sum_{j=N_\sigma+1}^N \sum_{i=N_\sigma+1}^j \sum_{n=i}^j {\rm I}_{3,2}^n\Big|
 \le  c \tau \int_{T-\sigma}^T \int_{T-\sigma}^t \int_{s}^t \xi^{-1}\,\d\xi\d s\d t\le c\tau.
\end{equation*}
Meanwhile, since $U_h^0(q_h^*)=U_h^0$ and $u(0)=u_0$, the summation by parts formula yields
\begin{align}
 &\tau \sum_{n=i}^j {\rm I}_{3,1}^n=\tau \sum_{n=i}^j  (\bar \partial_\tau  [U_h^n(q_h^*) - u(t_n)],  P_h\fy^n)\label{eqn:splitting-sum3}\\
 =&  (U_h^j(q_h^*) - u(t_j),P_h\fy^j) -  (U_h^{i-1}(q_h^*) - u(t_{i-1}),P_h\fy^i) - \tau \sum_{n=i}^{j-1}  ( U_h^n(q_h^*) - u(t_n),  \bar\partial_\tau P_h\fy^{n+1}).\nonumber
\end{align}
For the first two terms, by Cauchy-Schwarz inequality and H\"{o}lder's inequality, we have
\begin{align*}
&\Big|\tau^2 \sum_{j=N_\sigma+1}^N \sum_{i=N_\sigma+1}^j  (U_h^j(q_h^*) - u(t_j),P_h\fy^j) -  (U_h^{i-1}(q_h^*) - u(t_{i-1}),P_h\fy^i)  \Big| \\
\le& {c \Big(\tau \sum_{n=N_\sigma+1}^N \| U_h^n(q_h^*) - u(t_n) \|_{L^2(\Omega)}^2\Big)^\frac12} \le c \eta,
\end{align*}
since by \eqref{reg-parabolic-2}, $\|P_h\fy^i\|_{L^2(\Omega)}\leq c$.
Meanwhile, by using the regularity estimate \eqref{reg-parabolic-2} and the box constraint, we have
\begin{align*}
   \| \bar\partial_\tau P_h\fy^{n} \|_{L^2(\Omega)} &= \tau^{-1}\|\int_{t_{n-1}}^{t_{n}}P_h\frac{q^\dag-q_h^*}{q_\dag}\partial_tu(t)\d t \|_{L^2(\Omega)}
    \leq c\tau^{-1}\int_{t_{n-1}}^{t_{n}}\| \partial_tu(t)\|_{L^2(\Omega)}\d t \leq c,
\end{align*}
and hence
\begin{align*}
&\Big|\tau^3  \sum_{j=N_\sigma+1}^N \sum_{i=N_\sigma+1}^j   \sum_{n=i}^{j-1}  ( U_h^n(q_h^*) - u(t_n),  \bar\partial_\tau P_h\fy^{n+1})  \Big| \\
\le& c \tau^3  \sum_{j=N_\sigma+1}^N \sum_{i=N_\sigma+1}^j   \sum_{n=i}^{j-1}  \| U_h^n(q_h^*) - u(t_n)\|_{L^2(\Omega)} \\
\le& c \Big(\tau  \sum_{n=N_\sigma+1}^N   \| U_h^n(q_h^*) - u(t_n)\|_{L^2(\Omega)}^2 \Big)^\frac12 \le c\eta.
\end{align*}
Finally, this and the identity
\begin{equation*}
((q^\dag-q_h^*)\nabla u(t_n),\nabla\fy^n) = \frac12\int_\Omega \Big(\frac{q^\dag-q_h^*}{q^\dag}\Big)^2\big(q^\dag|\nabla u(t_n)  |^2 +(f(t_n)-\partial_t u(t_n))u(t_n)\big)\,\d x
\end{equation*}
(cf. the proof of Theorem \ref{thm:ellip-1})
lead to the desired assertion, completing the proof.
\end{proof}

The next result gives an $L^2(\Omega)$ estimate under a suitable positivity condition
similar to \eqref{eqn:cond-beta}. The proof is identical with that for Corollary
\ref{cor:error-elliptic}, and thus omitted.
\begin{corollary}\label{cor:error-q}
{Let Assumption \ref{ass:data2} be fulfilled,} and there exists some $\beta\geq0$ such that
\begin{equation} \label{eqn:cond-beta-2}
q^\dag(x) |\nabla u(q^\dag)(x,t)|^2 + (f(x,t)-\partial_tu(q^\dag)(x,t))u(q^\dag)(x,t) \ge c\,\mathrm{dist}(x,\partial\Omega)^\beta \quad \text{a.e. in }\Omega,
\end{equation}
for any $t\in[T-\sigma,T]$.
Then for any $\delta>0$, with $\eta= \tau + h^2  + \delta + \gamma^{\frac12}$, there holds
\begin{equation*}
  \|q^\dag-q_h^*\|_{L^2(\Omega)}\leq c ((h \gamma^{-1}\eta  +  { \gamma^{-\frac12}\min(1,h^{-1}\eta)})\eta)^\frac{1}{2(1+\beta)}.
\end{equation*}
In particular, the choices {$\gamma \sim \delta^2$}, {$h\sim\sqrt{\delta}$} and {$\tau\sim\delta$} imply
\begin{equation*}
  \|q^\dag-q_h^*\|_{L^2(\Omega)}  \le c\delta^{\frac1{4(1+\beta)}}.
\end{equation*}
\end{corollary}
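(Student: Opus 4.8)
The plan is to mimic the proof of Corollary~\ref{cor:error-elliptic}, the only genuine new point being the reduction of the triple time-sum in Theorem~\ref{thm:error-q} to a clean weighted $L^2(\Omega)$ inequality. Since $q^\dag-q_h^*$ does not depend on the indices $n,i,j$, the left-hand side of Theorem~\ref{thm:error-q} equals
\begin{equation*}
  \int_\Omega \Big(\frac{q^\dag-q_h^*}{q^\dag}\Big)^2 W(x)\,\d x,\qquad
  W(x):=\tau^3\!\!\sum_{j=N_\sigma+1}^N\sum_{i=N_\sigma+1}^j\sum_{n=i}^j \Big(q^\dag|\nabla u(t_n)|^2+(f(t_n)-\partial_t u(t_n))u(t_n)\Big).
\end{equation*}
Every time level $t_n$ occurring here satisfies $t_n\in[T-\sigma+2\tau,\,T]\subset[T-\sigma,T]$, so the positivity condition~\eqref{eqn:cond-beta-2} bounds each summand below by $c\,\mathrm{dist}(x,\partial\Omega)^\beta$ uniformly in $n$. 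Hence $W(x)\ge c\,\mathrm{dist}(x,\partial\Omega)^\beta\,\tau^3 N_{\mathrm{tri}}$, where $N_{\mathrm{tri}}=\sum_{j=N_\sigma+1}^N\sum_{i=N_\sigma+1}^j(j-i+1)=\binom{N-N_\sigma+2}{3}$ counts the admissible triples. Since $N-N_\sigma=\sigma/\tau-1$, we have $\tau^3 N_{\mathrm{tri}}\to\sigma^3/6>0$ as $\tau\to0$, so $\tau^3 N_{\mathrm{tri}}\ge c_\sigma>0$ for $\tau$ small. Combining this with $q^\dag\le c_1$ and Theorem~\ref{thm:error-q} gives
\begin{equation*}
  \int_\Omega (q^\dag-q_h^*)^2\,\mathrm{dist}(x,\partial\Omega)^\beta\,\d x\le c\,M,\qquad
  M:=\big(h\gamma^{-1}\eta+\gamma^{-\frac12}\min(1,h^{-1}\eta)\big)\eta,
\end{equation*}
with $c$ depending on $\sigma$ (and degenerating as $\sigma\to0^+$).

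From this point the argument is exactly that of Corollary~\ref{cor:error-elliptic}. Decompose $\Omega=\Omega_\rho\cup\Omega_\rho^c$ with $\Omega_\rho=\{x\in\Omega:\mathrm{dist}(x,\partial\Omega)\ge\rho\}$: on $\Omega_\rho$ use $\int_{\Omega_\rho}(q^\dag-q_h^*)^2\,\d x\le\rho^{-\beta}\int_\Omega(q^\dag-q_h^*)^2\,\mathrm{dist}(x,\partial\Omega)^\beta\,\d x\le c\rho^{-\beta}M$, and on $\Omega_\rho^c$ use the box constraints of $\Uad$ and $\Uad_h$ together with the elementary bound $|\Omega_\rho^c|\le c\rho$ (valid for the convex polyhedral $\Omega$) to get $\int_{\Omega_\rho^c}(q^\dag-q_h^*)^2\,\d x\le c\rho$. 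Balancing $\rho^{-\beta}M\sim\rho$ gives $\rho\sim M^{1/(1+\beta)}$ and hence $\|q^\dag-q_h^*\|_{L^2(\Omega)}^2\le cM^{1/(1+\beta)}$, which is the first assertion. For the stated rate, the choices $\gamma\sim\delta^2$, $h\sim\sqrt\delta$, $\tau\sim\delta$ make $\eta=\tau+h^2+\delta+\gamma^{\frac12}\sim\delta$; then $h\gamma^{-1}\eta^2\sim\delta^{\frac12}$, while $h^{-1}\eta\sim\delta^{\frac12}\to0$ forces $\gamma^{-\frac12}\min(1,h^{-1}\eta)\eta\sim\delta^{-1}\cdot\delta^{\frac12}\cdot\delta=\delta^{\frac12}$, so $M\sim\delta^{\frac12}$ and $\|q^\dag-q_h^*\|_{L^2(\Omega)}\le c\delta^{1/(4(1+\beta))}$.

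The routine parts — the domain splitting, the bound $|\Omega_\rho^c|\le c\rho$, and the parameter bookkeeping — are identical to the elliptic case. The only step requiring care, and the reason the proof is not literally the same as Corollary~\ref{cor:error-elliptic}, is the triple-sum reduction: one must check that every $t_n$ entering the sum lies in the observation window so that~\eqref{eqn:cond-beta-2} applies term by term, and that $\tau^3$ times the combinatorial count $N_{\mathrm{tri}}$ stays bounded below by a positive constant as $\tau\to0$. The latter is exactly why the weight $\tau^3$ (rather than a lower power of $\tau$) appears in Theorem~\ref{thm:error-q}: discarding any of the three nested sums would leave stray powers of $\tau$ and destroy the estimate, so the full $O(\tau^{-3})$ count of triples is needed to recover an $O(1)$ constant.
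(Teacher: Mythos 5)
Your proposal is correct and follows exactly the route the paper intends (the paper omits the proof, deferring to Corollary~\ref{cor:error-elliptic}): the only genuinely new ingredient is the reduction of the triple time-sum, and you handle it properly by checking that every $t_n$ in the sum lies in $[T-\sigma,T]$ so that \eqref{eqn:cond-beta-2} applies termwise, and that $\tau^3\binom{N-N_\sigma+2}{3}\to\sigma^3/6>0$, after which the domain splitting and parameter bookkeeping are verbatim from the elliptic case. Your closing remarks on the dependence of the constant on $\sigma$ and on why the weight must be exactly $\tau^3$ are accurate and worth keeping.
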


\begin{remark}
{Note that in the identity \eqref{eqn:splitting-sum3}, the first two terms cannot be bounded directly, since
only $\ell^2$ bounds are available on $(U_h^j(q_h^*) - u(t_j),P_h\fy^j)$ and $(U_h^{i-1}(q_h^*) - u(t_{i-1}),P_h\fy^i)$.
The triple sum $\sum_{j=N_{\sigma}+1}^N \sum_{i=N_{\sigma}+1}^j \sum_{n=i}^j$ is precisely to exploit relevant $\ell^2$ bounds.}
\end{remark}

\begin{remark}
The error estimate in Corollary \ref{cor:error-q} provides the usual $L^2(\Omega)$ error estimate.
Alternatively, one obtains the $L^2(\Omega)$ estimate, if the following
structural condition holds: For the exact diffusion coefficient $q^\dag$ and the corresponding state variable $u\equiv u(q^\dag)$, there holds
\begin{equation}\label{eqn:ass-uq}
 \int_{T-\sigma}^T \int_{T-\sigma}^t \int_s^t \Big(q^\dag|\nabla u(\xi)  |^2 +( f(\xi) -\partial_\xi u(\xi))u(\xi)\Big)\d \xi \d s\d t > c_0 \qquad \text{a.e. }x \in\overline{\Omega}.
\end{equation}
Similar structural conditions have been assumed in the literature, e.g., the following
characteristic condition \cite{TaiKarkkarinen:1995}:
$$t^{-1}\int_0^t\nabla u(q^\dag)(x,s)\d s\cdot \nu\geq c>0\quad \text{a.e. in}~~
 \Omega\times (0,T),$$
with some constant $c$ and vector $\mathbf{\nu}$, or \cite[Theorem 6.4]{WangZou:2010}
$$\alpha_0 |\int_0^t \nabla u(q^\dag)(s)\d s|^2 + t\int_0^t (\partial_su(q^\dag)(s)-f(s))\d s \geq 0\quad \text{a.e. in}~~
 \Omega\times (0,T).$$
\end{remark}

\subsection{On the positivity condition \eqref{eqn:cond-beta-2}}\label{ssec:positive}
Condition \eqref{eqn:cond-beta-2} allows deriving an $L^2(\Omega)$ estimate,
cf. Corollary \ref{cor:error-q}. Now we give sufficient conditions on problem data
to ensure \eqref{eqn:cond-beta-2}.
\begin{proposition}\label{lem:beta2-parab}
Let $\Omega$ be a bounded Lipschitz domain,
$q^\dag \in \mathcal{A}\cap W^{1,\infty}(\Omega)$, $u_0 \in H^2(\Omega)\cap H_0^1(\Omega)$, and $f\in H^{1}((0,T);L^2(\Omega))$.
Meanwhile, assume that $f \ge c_f >0$ and $\partial_tf \le 0$ a.e. in $\Omega\times(0,T)$,
and $u_0(x)\ge0$, $f(x,0) + \nabla\cdot(q^\dag \nabla u_0(x)) \le 0$ a.e. in $\Omega$. 
Then the positivity condition \eqref{eqn:cond-beta-2} holds with $\beta = 2$, with the constant $c$ only depending
on $c_0, c_1, c_f$ and $\Omega$.
\end{proposition}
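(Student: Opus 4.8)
The plan is to \emph{freeze the time variable} and thereby reduce the parabolic positivity condition \eqref{eqn:cond-beta-2} to the elliptic estimate \cite[Lemma 3.7]{Bonito:2017}. Fix $t\in[T-\sigma,T]$. Since $\partial_tu-\nabla\cdot(q^\dag\nabla u)=f$, the state $u(t)=u(q^\dag)(t)$ solves the elliptic boundary value problem
\begin{equation*}
  -\nabla\cdot(q^\dag\nabla u(t))=g(t)\ \text{ in }\Omega,\qquad u(t)=0\ \text{ on }\partial\Omega,\qquad g(t):=f(t)-\partial_tu(t),
\end{equation*}
and the left-hand side of \eqref{eqn:cond-beta-2} equals precisely $q^\dag|\nabla u(t)|^2+g(t)u(t)$. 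Hence, if one can show that $g(t)\in L^2(\Omega)$ and $g(t)\ge c_f>0$ a.e.\ in $\Omega$, then \cite[Lemma 3.7]{Bonito:2017} applied to this frozen-time problem yields $q^\dag|\nabla u(t)|^2+g(t)u(t)\ge c\,\mathrm{dist}(x,\partial\Omega)^2$ with a constant $c$ depending only on $c_0,c_1,c_f$ and $\Omega$, in particular independent of $t$, which is exactly the claim.

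It therefore suffices to prove that $\partial_tu\le0$ a.e.\ in $\Omega\times(0,T)$, for then $g=f-\partial_tu\ge f\ge c_f$, while $g(t)=-\nabla\cdot(q^\dag\nabla u(t))\in L^2(\Omega)$ by parabolic regularity. First I would record that, under $q^\dag\in W^{1,\infty}(\Omega)$, $u_0\in H^2(\Omega)\cap H_0^1(\Omega)$ and $f\in H^1(0,T;L^2(\Omega))\hookrightarrow C([0,T];L^2(\Omega))$, analytic semigroup theory gives $u\in C^1([0,T];L^2(\Omega))$ with $u(t)\in H_0^1(\Omega)$ and $\nabla\cdot(q^\dag\nabla u(t))\in L^2(\Omega)$ for every $t\in[0,T]$; in particular $w:=\partial_tu\in C([0,T];L^2(\Omega))\cap L^2(0,T;H_0^1(\Omega))$, $w(t)$ vanishes on $\partial\Omega$, and differentiating \eqref{eqn:var} in time shows that $w$ solves
\begin{equation*}
  \partial_tw-\nabla\cdot(q^\dag\nabla w)=\partial_tf\ \text{ in }\Omega\times(0,T),\qquad w=0\ \text{ on }\partial\Omega\times(0,T),\qquad w(0)=f(\cdot,0)+\nabla\cdot(q^\dag\nabla u_0).
\end{equation*}
The initial value is obtained by evaluating \eqref{eqn:var} at $t=0$; it belongs to $L^2(\Omega)$ because $u_0\in H^2(\Omega)$ and $q^\dag\in W^{1,\infty}(\Omega)$, and satisfies $w(0)\le0$ by hypothesis.

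Next I would apply the weak maximum principle to the equation for $w$: testing with the truncation $w^+=\max(w,0)\in H_0^1(\Omega)$ (which is admissible since $w$ vanishes on $\partial\Omega$) gives
\begin{equation*}
  \tfrac12\tfrac{\d}{\d t}\|w^+(t)\|_{L^2(\Omega)}^2+(q^\dag\nabla w^+(t),\nabla w^+(t))=(\partial_tf(t),w^+(t))\le0,
\end{equation*}
using $\partial_tf\le0$ and $w^+\ge0$; since $(q^\dag\nabla w^+,\nabla w^+)\ge c_0\|\nabla w^+\|_{L^2(\Omega)}^2\ge0$ and $\|w^+(0)\|_{L^2(\Omega)}=0$, integrating in time forces $\|w^+(t)\|_{L^2(\Omega)}\equiv0$, that is, $\partial_tu\le0$ a.e.\ in $\Omega\times(0,T)$. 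Together with the reduction of the first paragraph and \cite[Lemma 3.7]{Bonito:2017} this finishes the proof. The step I expect to require the most care is the rigorous justification of differentiating the parabolic problem in time and of the weak maximum principle under the present, relatively weak, regularity of the data; this can be handled via the analytic semigroup representation as sketched above, or alternatively by a difference-quotient argument carried out at the level of the backward Euler scheme \eqref{eqn:fully-0} and then passing to the limit. Let me also remark that the remaining hypotheses $u_0\ge0$ and $f\ge c_f>0$ likewise yield $u(t)\ge0$ by the parabolic maximum principle, which is the sign structure underlying \cite[Lemma 3.7]{Bonito:2017}.
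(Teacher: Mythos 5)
Your proof is correct and follows essentially the same route as the paper's: both establish $\partial_t u\le 0$ by applying the parabolic maximum principle to the time-differentiated equation for $w=\partial_t u$ with initial datum $f(0)+\nabla\cdot(q^\dag\nabla u_0)\le 0$, and then view $u(t)$ as the solution of a frozen-time elliptic problem with source $f(t)-\partial_t u(t)\ge c_f$ so as to invoke the elliptic positivity result with $\beta=2$. The only differences are presentational: the paper unfolds \cite[Lemma 3.7]{Bonito:2017} via the Green's function lower bound $G(x,y)\ge c|x-y|^{2-d}$ (after first noting $u\ge 0$), whereas you cite that lemma as a black box and replace the paper's appeal to \cite{Friedman:1958} by an explicit energy/truncation argument for the weak maximum principle.
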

\begin{proof}
Since $u_0\ge0$ and $f>c_f$, the maximum principle of parabolic equations
\cite{Friedman:1958} implies
\begin{equation*}
 u(x,t) \ge 0,\quad \forall (x,t)\in\overline{\Omega}\times[0,T].
\end{equation*}
Let $w(x,t)=\partial_t u(x,t)$. Then it satisfies
\begin{equation*}
\left\{\begin{aligned}
   \partial_t w - \nabla\cdot(q^\dag \nabla w) &= \partial_tf,\quad \mbox{in }\Omega\times(0,T], \\
   w & = 0, \quad \mbox{on }\partial\Omega\times(0,T],\\
   w(0) &= f(0) + \nabla\cdot(q^\dag \nabla u_0),\quad\mbox{in }\Omega.
\end{aligned}\right.
\end{equation*}
By assumption, $\partial_tf\leq0$ in $\Omega\times(0,T]$ and $w(0)\leq0$ in $\Omega$.
Then the parabolic maximum principle implies $\partial_tu(x,t) = w(x,t) \le 0$ in  $\overline \Omega\times[0,T]$.
Therefore, there holds
 \begin{align}\label{eqn:est-pos}
   q^\dag(x) |\nabla u(x,t)|^2 + (f(x,t)-\partial_tu(x,t))u(x,t)
&\ge \min(c_0, c_f)(|\nabla u(x,t)|^2 + u(x,t)).
\end{align}
So it suffices to prove $u(x,t)\ge c\, \text{dist}(x,\partial\Omega)^2$ for $(x,t)\in\Omega\times(0,T]$. For
any fixed $t\in[T-\sigma,T]$, we have $f(x,t) - \partial_t u(x,t)\in L^2(\Omega)$. Now consider the elliptic problem
\begin{align}\label{eqn:para-ellip}
 \left\{\begin{aligned}
 {-}  \nabla \cdot(q^\dag\nabla u(t)) &= f(t) - \partial_t u(t),\quad\text{in } \Omega,\\
   u(t) &= 0, \quad \text{on }  \partial \Omega.
\end{aligned}\right.
\end{align}
Let $G(x,y)$ be the Green's function corresponding to the elliptic operator $\nabla
\cdot(q^\dag(x)\nabla \cdot)$. Then $G(x,y)$ is nonnegative (by maximum principle)
and satisfies the following \textit{a priori} estimate (see e.g., \cite[Theorem 1.1]{GruterWidman:1982} and \cite[Lemma 3.7]{Bonito:2017})
 \begin{align*}
 G(x,y) \ge c |x-y|^{2-d}\quad \text{for}\quad |x-y|\le \rho(x):=\text{dist}(x,\partial\Omega).
\end{align*}
Consequently, for any $x\in \Omega$ and $t\in [T-\sigma,T]$, there holds
\begin{align*}
 u(x,t) & = \int_\Omega G(x,y)  (f(y,t) - \partial_t u(y,t)) \,\d y \ge \int_\Omega G(x,y)  f(y,t)   \,\d y \ge c_f \int_\Omega G(x,y)   \,\d y  \\
 &\ge  c_f \int_{|x-y|<\rho(x)/2} G(x,y)   \,\d y \ge c \int_{|x-y|<\rho(x)/2} |x-y|^{2-d} \, \d y \ge c\rho(x)^2.
 \end{align*}
This completes the proof of the proposition.
\end{proof}

The next result gives sufficient conditions for the positivity condition \eqref{eqn:cond-beta-2} with
$\beta=0$, under stronger regularity assumptions on the problem data.
\begin{proposition}\label{lem:beta0-parab}
Let $\Omega$ be a bounded $C^{2,\alpha}$domain,
$f\in C^1([0,T]; C^{0,\alpha}(\overline{\Omega}))$
with $f \ge c_f >0$, $\partial_tf \le 0$ in $\overline{\Omega}\times[0,T]$,
and $u_0 \in C^{2,\alpha}(\overline{\Omega})\cap H_0^1(\Omega)$ with $u_0\ge 0$ in $\Omega$.
Moreover, assume $q^\dag \in \mathcal{A}\cap C^{1,\alpha}(\overline{\Omega})$,
and $f(x,0) + \nabla\cdot(q^\dag\nabla u_0(x)) \le 0$ in $\Omega$.
Then the positivity condition \eqref{eqn:cond-beta-2} holds with $\beta = 0$, with the constant only depending
on $c_0, c_1, c_f, \Omega$ and $\| q^\dag \|_{C^{1,\alpha}(\overline{\Omega})}$.
\end{proposition}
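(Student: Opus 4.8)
The plan is to follow the same route as in the proof of Proposition~\ref{lem:beta2-parab}: establish the sign facts $u\ge0$ and $\partial_tu\le0$ by the parabolic maximum principle, rewrite $u(\cdot,t)$ as the solution of the elliptic problem \eqref{eqn:para-ellip} with a strictly positive right-hand side, and then invoke a pointwise positivity estimate for that elliptic problem. The only change from the $\beta=2$ case is that, in place of the Green's-function bound $u(x,t)\ge c\,\mathrm{dist}(x,\partial\Omega)^2$, we use the sharper interior/boundary estimate \cite[Lemma~3.3]{Bonito:2017}, which delivers $\beta=0$ but requires H\"older-continuous elliptic data.

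First I would record the sign facts. Since $u_0\ge0$ and $f\ge c_f>0$, the parabolic maximum principle gives $u\ge0$ on $\overline{\Omega}\times[0,T]$. Differentiating \eqref{eqn:parabolic} in time, $w:=\partial_tu$ solves $\partial_tw-\nabla\cdot(q^\dag\nabla w)=\partial_tf$ in $\Omega\times(0,T]$ with $w=0$ on $\partial\Omega\times(0,T]$ and $w(0)=f(\cdot,0)+\nabla\cdot(q^\dag\nabla u_0)$; the hypotheses $\partial_tf\le0$ and $f(\cdot,0)+\nabla\cdot(q^\dag\nabla u_0)\le0$ then force $w=\partial_tu\le0$ on $\overline{\Omega}\times[0,T]$ by the maximum principle. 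Consequently $g(\cdot,t):=f(\cdot,t)-\partial_tu(\cdot,t)\ge c_f>0$ on $\overline{\Omega}$ for every $t$, and $u(\cdot,t)$ solves the elliptic boundary value problem \eqref{eqn:para-ellip} with right-hand side $g(\cdot,t)$.

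Next I would upgrade this to H\"older regularity. Since $f\in C^1([0,T];C^{0,\alpha}(\overline{\Omega}))$, it suffices to bound $\sup_{t\in[T-\sigma,T]}\|\partial_tu(t)\|_{C^{0,\alpha}(\overline{\Omega})}$. Starting from the \emph{a priori} bound $\|\partial_tu(t)\|_{L^\infty(\Omega)}\le\|w(0)\|_{L^\infty(\Omega)}+\int_0^T\|\partial_sf(s)\|_{L^\infty(\Omega)}\,\d s$, I would apply boundary parabolic Schauder estimates to the equation for $w$ on $\overline{\Omega}\times[T-\sigma',T]$ for some $\sigma<\sigma'<T$: the coefficient $q^\dag\in\mathcal{A}\cap C^{1,\alpha}(\overline{\Omega})$, the domain is $C^{2,\alpha}$, $w$ carries homogeneous Dirichlet data, and the source $\partial_tf$ is bounded and H\"older in space; because $[T-\sigma,T]$ is strictly separated from the initial time $t=0$, parabolic smoothing yields $\sup_{t\in[T-\sigma,T]}\|w(t)\|_{C^{0,\alpha}(\overline{\Omega})}<\infty$ with no appeal to any compatibility of $w(0)$ at the parabolic corner $\partial\Omega\times\{0\}$. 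Hence $g(\cdot,t)\in C^{0,\alpha}(\overline{\Omega})$ with norm bounded uniformly in $t\in[T-\sigma,T]$. Now \cite[Lemma~3.3]{Bonito:2017}, applied to \eqref{eqn:para-ellip} at each fixed $t\in[T-\sigma,T]$ (using $q^\dag\in\mathcal{A}\cap C^{1,\alpha}(\overline{\Omega})$, $\Omega$ of class $C^{2,\alpha}$, and $g(\cdot,t)\in C^{0,\alpha}(\overline{\Omega})$ with $g(\cdot,t)\ge c_f>0$), provides a constant $c>0$ depending only on $c_0,c_1,c_f,\Omega$ and $\|q^\dag\|_{C^{1,\alpha}(\overline{\Omega})}$ — hence independent of $t$ — such that $q^\dag|\nabla u(t)|^2+g(\cdot,t)\,u(t)\ge c$ a.e.\ in $\Omega$; internally this combines the Hopf lemma near $\partial\Omega$, where $u(t)$ vanishes but $|\nabla u(t)|$ is bounded below, with a strong-maximum-principle lower bound for $u(t)$ in the interior. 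Recalling $g(\cdot,t)=f(\cdot,t)-\partial_tu(\cdot,t)$, this is precisely \eqref{eqn:cond-beta-2} with $\beta=0$.

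The main obstacle is exactly this uniform-in-time, up-to-the-boundary Schauder bound for $\partial_tu$: one has to confirm that the regularizing effect of the equation over the fixed positive time lag $T-\sigma$ is enough to pass from the $L^\infty$ bound on $w$ to a $C^{0,\alpha}(\overline{\Omega})$ bound on $[T-\sigma,T]$, and that the mild mismatch between the temporal regularity of $\partial_tf$ (merely continuous in $t$) and the $C^{\alpha/2}$-in-time input usually assumed in parabolic Schauder theory can be absorbed, e.g.\ by a time-localization argument or by exploiting the $C^1$-in-time structure of $f$. Everything else — the two maximum-principle arguments and the invocation of \cite[Lemma~3.3]{Bonito:2017} — is routine given the stated hypotheses.
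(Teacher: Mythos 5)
Your proposal is correct and follows essentially the same route as the paper: maximum principle arguments give $u\ge 0$ and $\partial_tu\le 0$, so $u(\cdot,t)$ solves the elliptic problem \eqref{eqn:para-ellip} with a $C^{0,\alpha}(\overline{\Omega})$ source bounded below by $c_f$, and the conclusion follows from \cite[Lemma~3.3]{Bonito:2017} (Schauder estimates plus a compactness/Hopf argument). The only difference is that you spell out the parabolic Schauder/smoothing step needed to get the uniform $C^{0,\alpha}(\overline{\Omega})$ bound on $\partial_tu$ over $[T-\sigma,T]$, which the paper asserts without detail.
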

\begin{proof}
By the argument in the proof of Proposition \ref{lem:beta2-parab}, we have $\partial_t u \in C([0,T]; C^{0,\alpha}(\overline{\Omega}))$
and $\partial_t u \le 0$ for all $(x,t)\in \overline{\Omega}\times (0,T)$. Hence, the inequality \eqref{eqn:est-pos} is still valid.
Now it suffices to show that for any $(x,t)\in\overline{\Omega}\times[T-\sigma,T]$,
there holds $|\nabla u(x,t)|^2 + u(x,t) \ge c > 0$. Note that $u(x,t)$ is the solution of the elliptic problem \eqref{eqn:para-ellip}
with a $C^{0,\alpha}(\overline{\Omega})$ source term $f(t)-\partial_t u(t)\ge f(t) \ge c_f$.
Then the desired result follows from Schauder estimates and a standard compactness argument.
For the details, see the proof of \cite[Lemma 3.3]{Bonito:2017}.
\end{proof}

\section{Numerical results}\label{sec:numer}

In this section, we present several numerical experiments to complement the analysis.
Throughout, the discrete optimization problem is solved by the conjugate gradient
(CG) method, which converges within tens of iterations. The lower and upper bounds
in the admissible set $\Uad$ are taken to be $0.5$ and $ 5$, respectively, and are enforced
by a projection step. In the elliptic case, the noisy data $z^\delta$ is generated by
\begin{equation*}
z^\delta(x) = u(q^\dag)(x) +  \varepsilon \sup_{x\in\Omega}|u(q^\dag)|\xi(x),
\end{equation*}
where $\xi$ follows the standard Gaussian distribution, and $\varepsilon>0$ denotes the (relative) noise
level, and similarly for the parabolic case. The noisy data $z^\delta$ is first generated on a fine mesh
and then interpolated to a coarse spatial/ temporal mesh for the inversion step. All the computations
are carried out on a personal laptop with MATLAB 2019.

\subsection{Numerical results for elliptic problems}

First we give one- and two-dimensional elliptic examples.

\begin{example}\label{exam:ell1d}
$\Omega=(0,1)$, $q^\dag(x)=2+\sin2\pi x$ and $f\equiv1$. The exact
data is generated on a fine mesh with a mesh size $h=1/3200$.
\end{example}

The numerical results for Example \ref{exam:ell1d} are summarized in Table \ref{tab:exam1},
where the numbers in the last column denote convergence rates with respect to the noise level
$\delta$, i.e., the exponent $\alpha$ in $O(\delta^\alpha)$. In the tables, $e_q$ and $e_u$ are defined by
\begin{align*}
  e_q &= \|q_h^*-q^\dag\|_{L^2(\Omega)}\quad\mbox{and}\quad
  e_u = \|u_h(q_h^*)-u(q^\dag)\|_{L^2(\Omega)},
\end{align*}
respectively. For the convergence with respect to $\varepsilon$, the regularization parameter $\gamma$ and mesh
size $h$ are taken to be $\gamma=c_\gamma \varepsilon^2$ and $h=c_h\varepsilon^\frac12$, respectively, as
suggested by Corollary \ref{cor:error-elliptic}, where the constant $c_\gamma$ is determined by a trial
and error way. Table \ref{tab:exam1}(a) indicates that the error $e_q$ decays
to zero as the noise level $\varepsilon$ decreases to zero, with an empirical rate $O(\delta^{0.76})$.
Meanwhile, the numerical experiment shows that the weight $|\nabla u(q^\dag)|^2+fu(q^\dag) $ in the error
estimate is indeed strictly positive over the domain $\overline\Omega$, even though both components
have vanishing points. Thus, by Theorem \ref{thm:ellip-1} and Corollary \ref{cor:error-elliptic},
the predicted rate is $O(\delta^\frac14)$, which is much lower than the empirical rate $O(\delta^{0.76})$, indicating
the suboptimality of the predicted rate. The error $e_u$ converges slightly faster than first order. See also Fig.
\ref{fig:ell1d} for an illustration of the reconstructions at three different noise levels.

\begin{table}[hbt!]
\centering
\caption{Numerical results for Example \ref{exam:ell1d}: convergence with respect to $\varepsilon$, with $\gamma$
and $h$ initialized to 5.00e-8 and 2.50e-2.\label{tab:exam1}}
\begin{tabular}{c|cccccccc}
  \toprule
   $\varepsilon$ &  5.00e-2  & 3.00e-2 &  1.00e-2  & 5.00e-3 &  3.00e-3 &  1.00e-3 &  5.00e-4 &       \\
   \midrule
      $e_q$      &  2.52e-1  & 2.56e-1 &  8.08e-2  & 4.84e-2 &  4.06e-2 &  1.63e-2 &  8.43e-3 &  0.76 \\
      $e_u$      &  2.10e-3  & 9.89e-4 &  2.54e-4  & 1.20e-4 &  7.45e-5 &  2.06e-5 &  8.46e-6 &  1.16 \\
   \bottomrule
\end{tabular}
\end{table}

\begin{figure}[hbt!]
  \centering
  \begin{tabular}{ccc}
  \includegraphics[width=.33\textwidth]{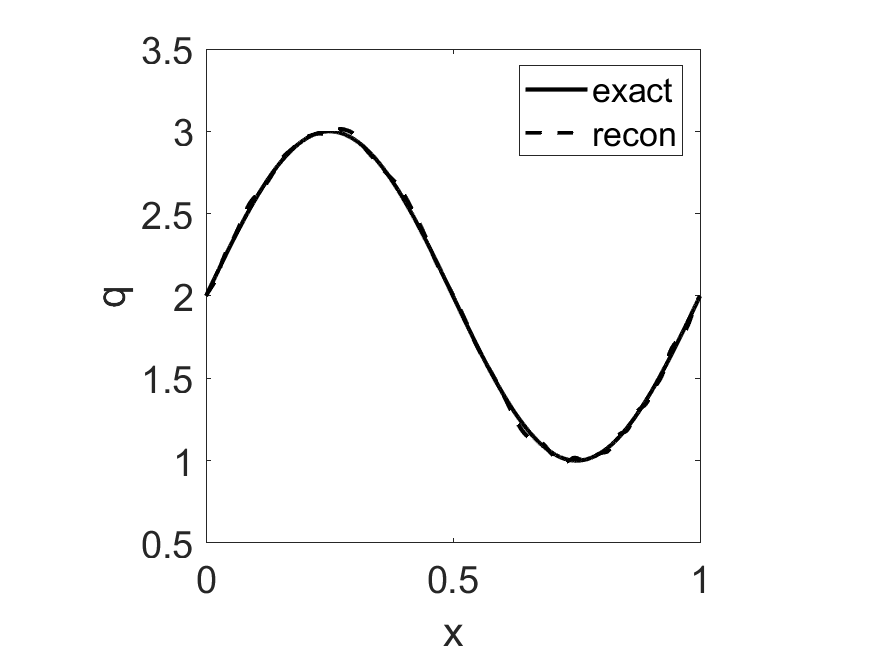}&
  \includegraphics[width=.33\textwidth]{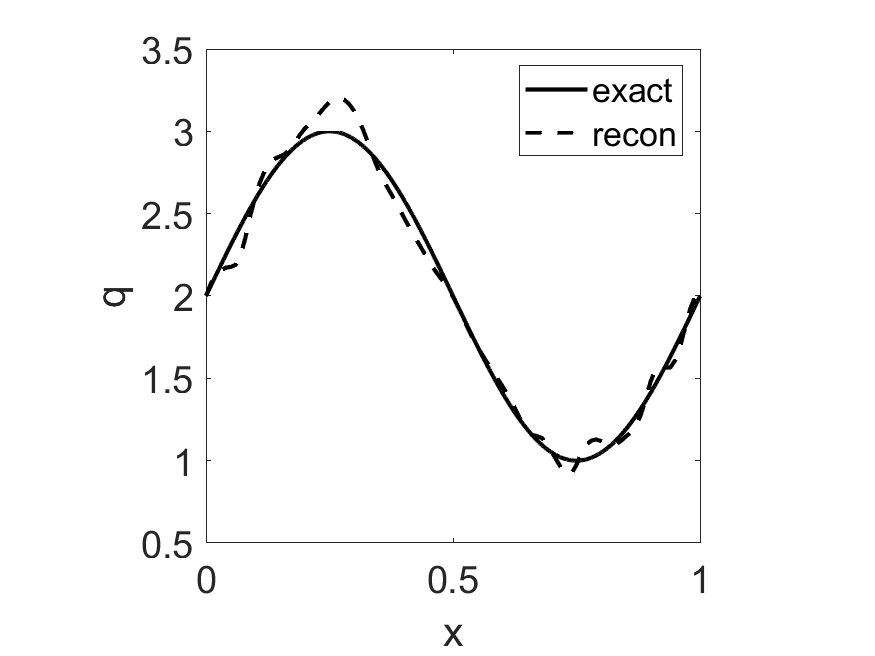}&
  \includegraphics[width=.33\textwidth]{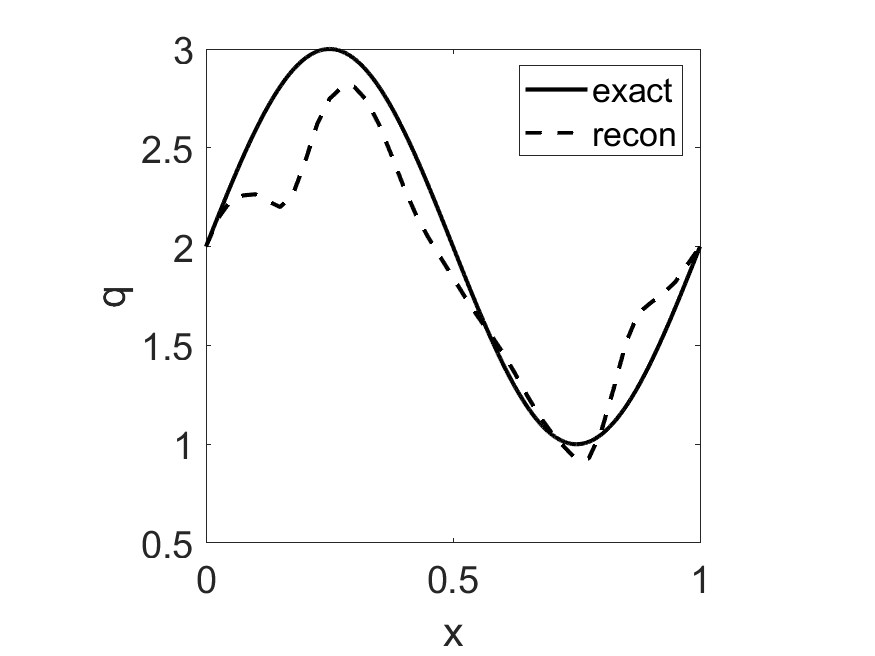}\\
    (a) $\varepsilon$=1e-3 & (b) $\varepsilon$=1e-2  & (c) $\varepsilon$=5e-2
  \end{tabular}
  \caption{Numerical reconstructions for Example \ref{exam:ell1d} at three noise levels.\label{fig:ell1d}}
\end{figure}

\begin{example}\label{exam:ell2d}
$\Omega=(0,1)^2$, $q^\dag(x_1,x_2)=1+x_2(1-x_2)\sin\pi x_1$ and $f\equiv1$. The
data is generated on a fine mesh with a mesh size $h=1/200$.
\end{example}

The numerical results for Example \ref{exam:ell2d} are presented in Table \ref{tab:exam2} and Fig.
\ref{fig:ell2d}. The empirical convergence rates for $e_q$ and $e_u$ with respect to $\delta$ are
about $O(\delta^{0.72})$ and $O(\delta)$, respectively, which are comparable with that for
Example \ref{exam:ell1d}. In either metric, the convergence is very steady. Note that for this
example, the weight $q^\dag|\nabla u(q^\dag)|^2+fu(q^\dag)$ is not strictly positive over
$\overline{\Omega}$, since it vanishes at two corners of the square domain $\Omega$.
\begin{table}
\centering
\caption{Numerical results for Example \ref{exam:ell2d}: convergence with respect to $\varepsilon$, with $\gamma$ and $h$ initialized to
5e-6 and 8.33e-2. \label{tab:exam2}}
\begin{tabular}{c|cccccccc}
  \toprule
   $\varepsilon$ & 5.00e-2 &  3.00e-2 &  1.00e-2 &  5.00e-3 &  3.00e-3 &  1.00e-03 &  5.00e-4 &     \\
   \midrule
      $e_q$      & 4.46e-2 &  3.17e-2 &  1.27e-2 &  6.98e-3 &  5.59e-3 &  2.64e-03 &  1.63e-3 & 0.72 \\
      $e_u$      & 7.88e-4 &  4.11e-4 &  1.20e-4 &  6.56e-5 &  3.89e-5 &  1.39e-05 &  7.72e-6 & 1.00 \\
   \bottomrule
  \end{tabular}
\end{table}

\begin{figure}
  \centering
  \begin{tabular}{ccc}
  \includegraphics[width=.33\textwidth]{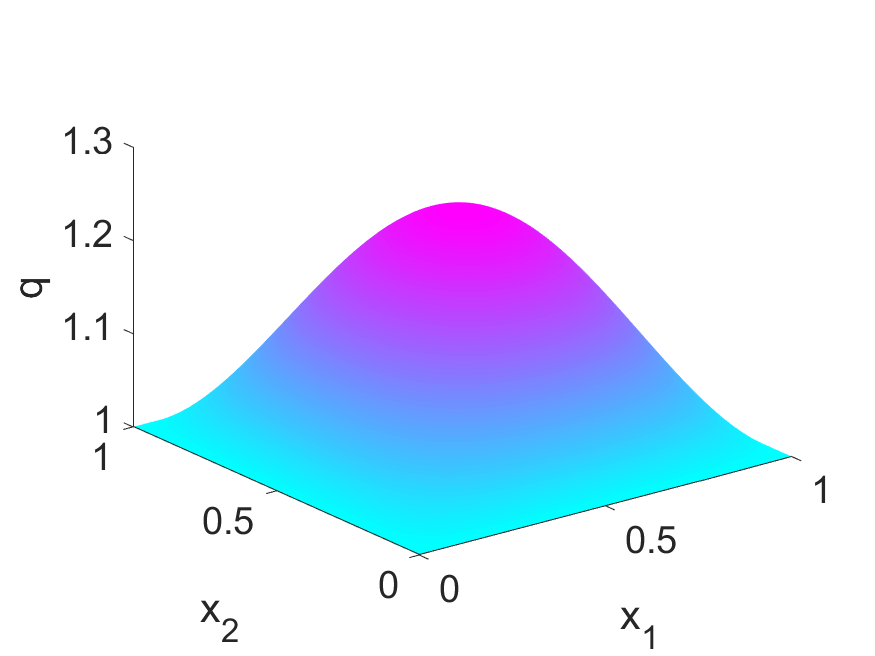}&
  \includegraphics[width=.33\textwidth]{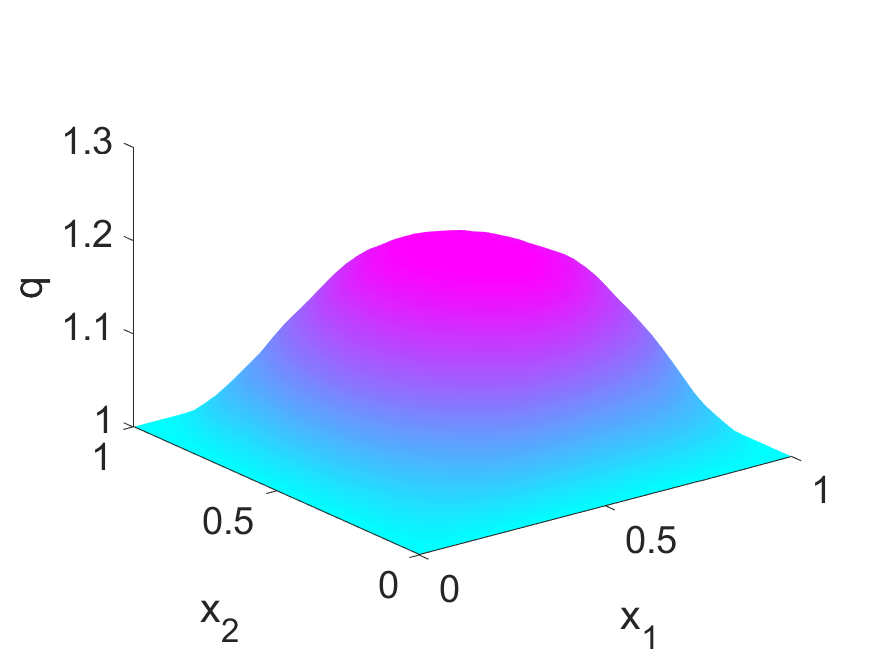}&
  \includegraphics[width=.33\textwidth]{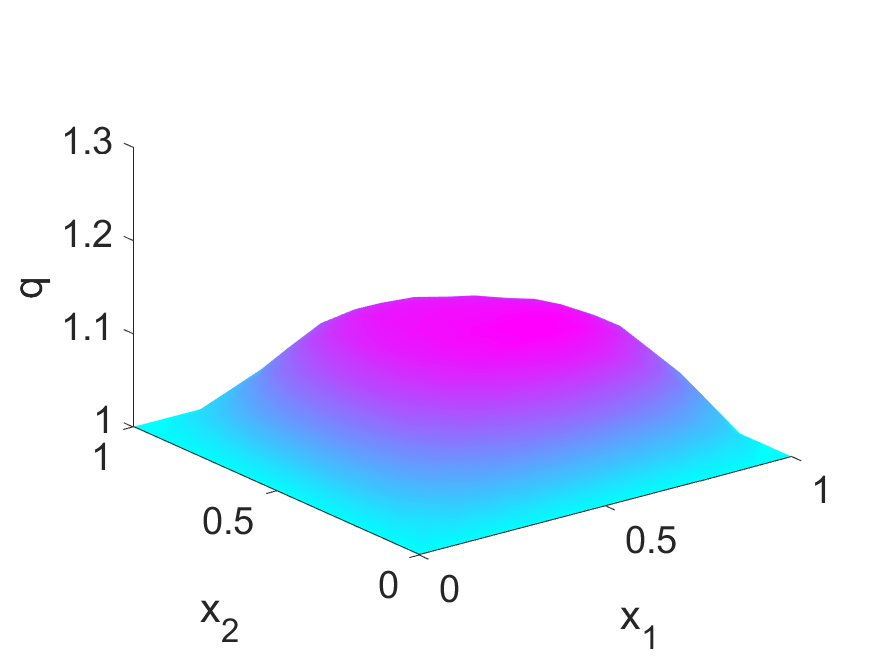}\\
    (a) exact & (b) $\varepsilon$=1e-2  & (c) $\varepsilon$=5e-2
  \end{tabular}
  \caption{Numerical reconstructions for Example \ref{exam:ell2d} at two noise levels.\label{fig:ell2d}}
\end{figure}

\subsection{Numerical results for parabolic problems}
Now we present numerical results for one- and two-dimensional parabolic problems.

\begin{example} \label{exam:par1d}
$\Omega=(0,1)$, $T=0.1$, $\sigma=0$, $q^\dag=2+\sin (2\pi x)e^{-2(1-x)}$, $u_0=\sin (\pi x)$,
and $f=4x(1-x)$. The exact data is generated on a fine mesh with $h=1/1600$ and $\tau=1/8000$.
\end{example}

The numerical results for Example \ref{exam:par1d} are shown in Table \ref{tab:exam3}
and Fig. \ref{fig:par1d}, where $e_q$ is defined as before and $e_u$ is defined by
$e_u=(\tau\sum_{n=N_\sigma}^N\|U_h^n(q_h^*)(t_n)-u(q^\dag)(t_n)\|_{L^2(\Omega)}^2)^\frac12$.
The regularization parameter $\gamma$, the mesh size $h$ and the time step size $\tau$
are chosen such that they all decreases with the noise level $\varepsilon$, as suggested
by Corollary \ref{cor:error-q}. One can check that the positivity condition
\eqref{eqn:cond-beta-2} holds, and thus Corollary \ref{cor:error-q} is indeed applicable.
We observe a very steady convergence for both quantities $e_q$ and $e_u$. The convergence
rate for $e_q$ is comparable with the elliptic cases in Examples
\ref{exam:ell1d} and \ref{exam:ell2d}, however, the rate for $e_u$ is
slightly slower at a rate about $O(\delta^{0.64})$, when compared with
the nearly $O(\delta)$ rate in Examples \ref{exam:ell1d} and \ref{exam:ell2d}.
The precise mechanism for this loss is still unclear.

\begin{table}[hbt!]
\centering
\caption{Numerical results for Example \ref{exam:par1d}, convergence with respect $\varepsilon$, with $\gamma$, $h$ and $\tau$ are initialized with 1.00e-7,
  2.50e-2 and $1/400$. \label{tab:exam3}}
\begin{tabular}{c|cccccccc}
  \toprule
   $\varepsilon$ & 5.00e-2 &  3.00e-2 &  1.00e-2 &  5.00e-3 &  3.00e-3  & 1.00e-3  &  5.00e-4        \\
   \midrule
      $e_q$      & 1.97e-2 &  1.34e-2 &  6.74e-3 &  2.58e-3 &  2.26e-3 &  8.86e-4 &   9.57e-4 & 0.71\\
      $e_u$      & 2.31e-4 &  1.07e-4 &  8.78e-5 &  3.83e-5 &  3.68e-5 &  1.22e-5 &   1.19e-5 & 0.64\\
   \bottomrule
\end{tabular}
\end{table}

\begin{figure}
  \centering
  \begin{tabular}{ccc}
  \includegraphics[width=.33\textwidth]{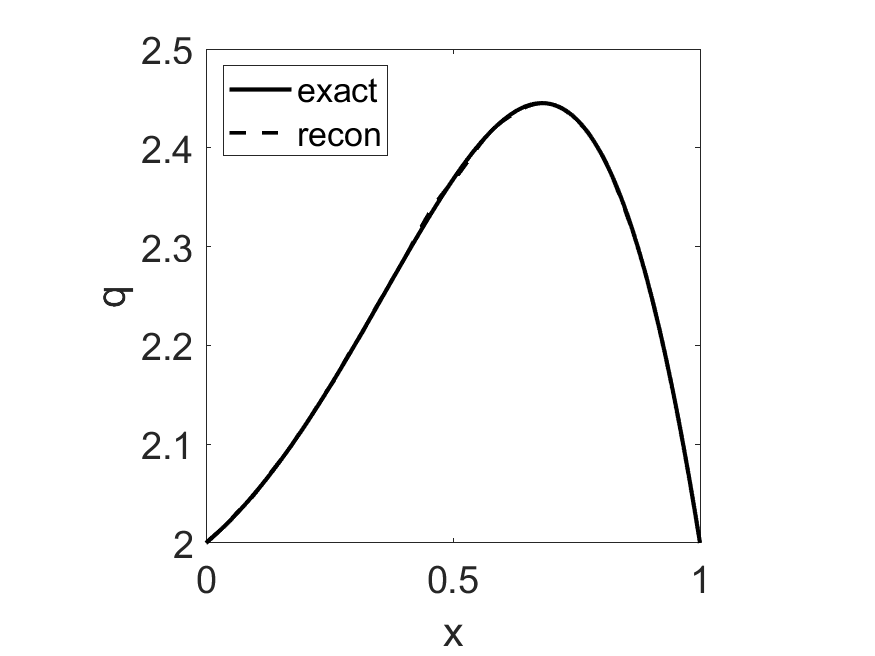}&
  \includegraphics[width=.33\textwidth]{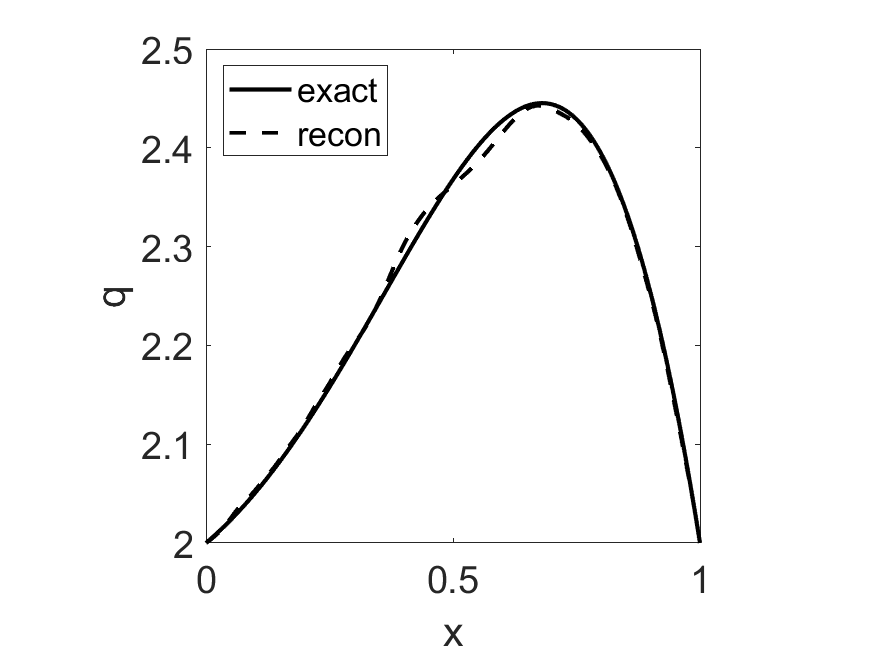}&
  \includegraphics[width=.33\textwidth]{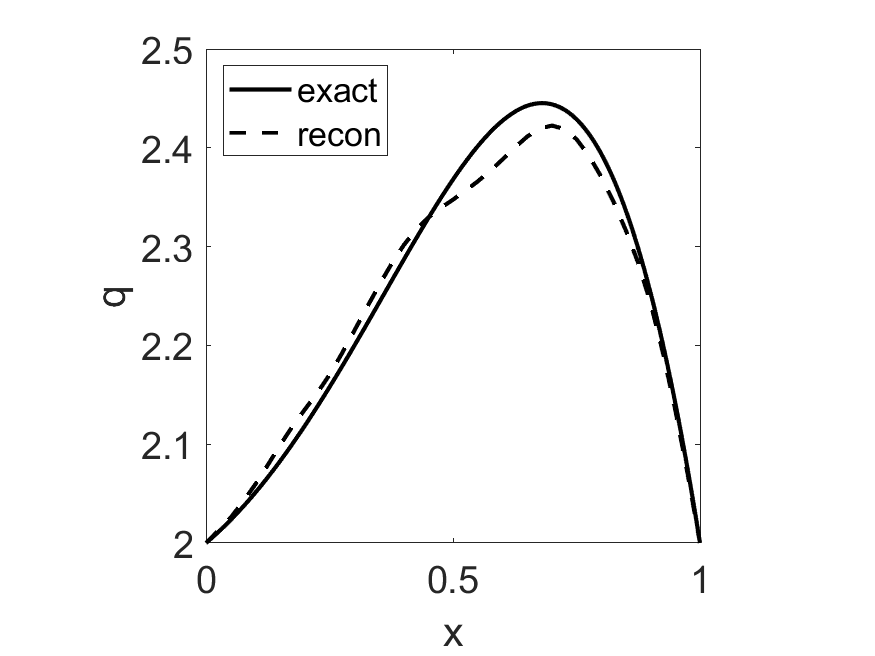}\\
    (a) $\varepsilon$=1e-3 & (b) $\varepsilon$=1e-2  & (c) $\varepsilon$=5e-2
  \end{tabular}
  \caption{Numerical reconstructions for Example \ref{exam:par1d} at three noise levels.\label{fig:par1d}}
\end{figure}

\begin{example}\label{exam:par2d}
$\Omega=(0,1)^2$, $T=0.1$, $q^\dag(x_1,x_2) = 1+(1-x_1)x_1\sin(\pi x_2)$, $u_0(x_1,x_2)=4x_1(1-x_1)$ and $f\equiv1$.
The exact data is generated on a finer mesh with $h=1/200$ and $\tau=1/12800$.
\end{example}

The numerical results for Example \ref{exam:par2d} are shown in Table \ref{tab:exam4} and Fig. \ref{fig:par2d}.
The empirical rates with respect to $\varepsilon$ and $\tau$ are largely comparable with the preceding examples,
and the overall convergence is very steady.

\begin{table}[hbt!]
\centering
\caption{Numerical results for Example \ref{exam:par2d}: convergence with respect to $\varepsilon$, with $\gamma$, $h$ and $\tau$ initialized to
1.00e-6, 8.33e-2 and $1/1600$.\label{tab:exam4}}
 \begin{tabular}{c|cccccccc}
  \toprule
   $\varepsilon$ & 5.00e-2 &  3.00e-2  & 1.00e-2 &  5.00e-3 &  3.00e-3 &  1.00e-3 &  5.00e-4 &      \\
  \midrule
      $e_q$      &   1.95e-2 &  9.54e-3 &  4.32e-3 &  2.87e-3 &  2.28e-3 &  1.37e-3 &  9.37e-4 &  0.62\\
      $e_u$      &   3.49e-3 &  1.70e-3 &  7.52e-4 &  3.92e-4 &  2.68e-4 &  7.26e-5 &  4.17e-5 &  0.94\\
   \bottomrule
  \end{tabular}
\end{table}

\begin{figure}[hbt!]
  \centering
  \begin{tabular}{ccc}
  \includegraphics[width=.33\textwidth]{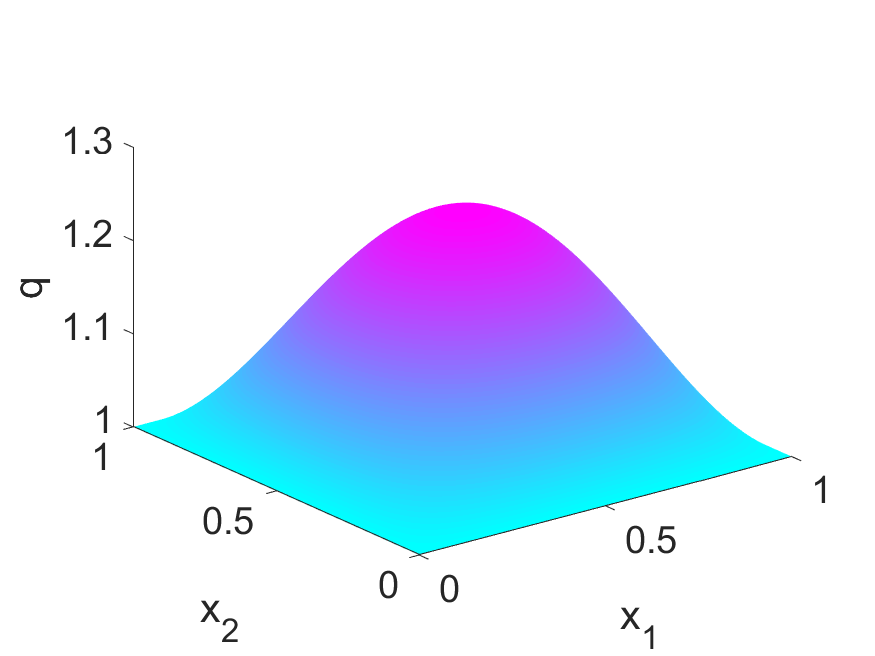}&
  \includegraphics[width=.33\textwidth]{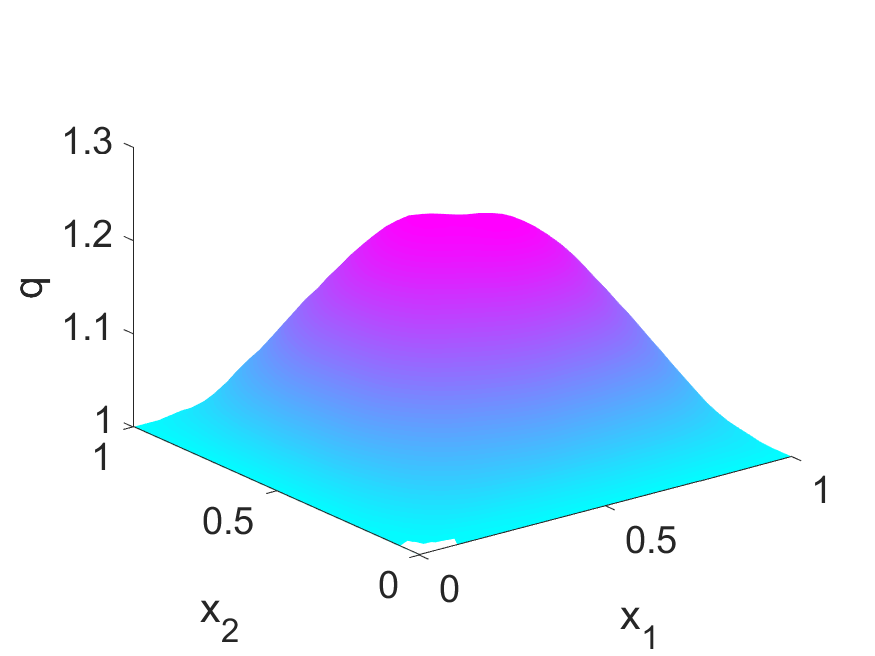}&
  \includegraphics[width=.33\textwidth]{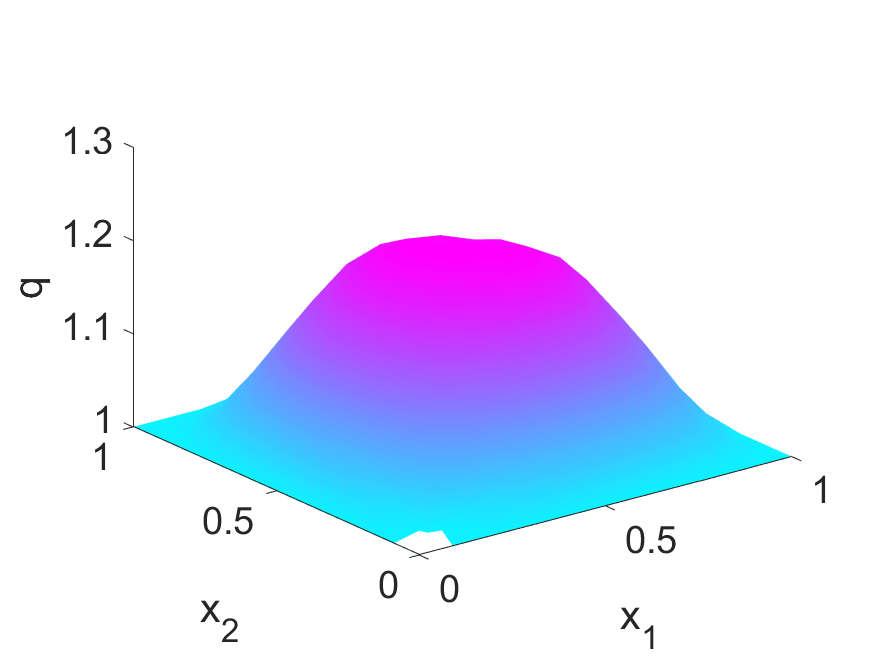}\\
    (a) exact & (b) $\varepsilon$=1e-2  & (c) $\varepsilon$=5e-2
  \end{tabular}
  \caption{Numerical reconstructions for Example \ref{exam:par2d} at two noise levels.\label{fig:par2d}}
\end{figure}

In sum, the numerical experiments confirm the convergence of the Galerkin approximation in
the $L^2(\Omega)$. However, the theoretical rate is still slower than the empirical one. It
remains an important issue to derive sharp error estimates. In addition, it is also of
interest to derive convergence rates with respect to $h$ for the (nonlinear) optimal
control problems (with fixed $\delta$ and $\gamma$), for which there seems no known result.

\appendix
\section{Basic estimates}
We give an error bound on the Galerkin approximation. This estimate is
used in the proof of Lemma \ref{lem:err-1}.
\begin{lemma}\label{lem:est-01}
Let $q\in W^{1,\infty}(\Omega)\cap H^2(\Omega)$, with $c_0\leq q(x)\leq c_1$ a.e. $\Omega$.
Let $u_h(q)\in X_h$ and $u_h(\mathcal{I}_h q)\in X_h$ be the solutions to the variational problems
\begin{align*}
 (q\nabla u_h(q),\nabla v) = (f,v) \quad\text{and}\quad
  (\mathcal{I}_h q \nabla u_h(\mathcal{I}_h q),\nabla v) = (f,v),\quad \forall v\in X_h,
 \end{align*}
respectively. Then for any $\epsilon>0$ and $p\geq \max(d+\epsilon,2)$, there holds
\begin{equation*}
  \| u_h( q) - u_h(\mathcal{I}_h q) \|_{L^2(\Omega)} \le c h^2 \| f \|_{L^p(\Omega)}.
\end{equation*}
\end{lemma}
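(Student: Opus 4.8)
The plan is to estimate $w_h := u_h(q) - u_h(\mathcal{I}_h q) \in X_h$ by a duality (Aubin--Nitsche) argument, using the $L^p$-perturbation in the coefficient and standard finite element a priori bounds on $u_h(\mathcal I_h q)$. First I would write down the error equation: subtracting the two variational problems and adding and subtracting $\mathcal I_h q\,\nabla u_h(q)$ gives, for all $v \in X_h$,
\begin{equation*}
 (\mathcal{I}_h q\,\nabla w_h,\nabla v) = ((\mathcal{I}_h q - q)\nabla u_h(q),\nabla v).
\end{equation*}
The right-hand side is the source of the perturbation, and crucially $\|\mathcal{I}_h q - q\|_{L^\infty(\Omega)} \le c h^2 \|q\|_{W^{2,\infty}(\Omega)}$ — wait, only $q \in H^2(\Omega)$ is assumed, so I would instead use $\|\mathcal{I}_h q - q\|_{L^\infty(\Omega)} \le c h^{2 - d/2}\|q\|_{H^2(\Omega)}$ from \eqref{eqn:int-err-inf} with $p=2$, $s=2$ (valid since $2\cdot 2 > d$), or better, keep it as an $L^r$ bound and pair it via H\"older so that only $h^2$ (not $h^{2-d/2}$) ultimately survives through the duality gain.

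Next I would set up the dual problem: let $\phi \in H^2(\Omega)\cap H_0^1(\Omega)$ solve $-\nabla\cdot(\mathcal I_h q\,\nabla\phi) = w_h$ in $\Omega$ — here I need elliptic regularity for the coefficient $\mathcal I_h q$, which is Lipschitz uniformly in $h$ (since $q \in W^{1,\infty}$ and $\mathcal I_h$ is $W^{1,\infty}$-stable), so $\|\phi\|_{H^2(\Omega)} \le c\|w_h\|_{L^2(\Omega)}$ with $c$ independent of $h$. Then
\begin{equation*}
 \|w_h\|_{L^2(\Omega)}^2 = (\mathcal I_h q\,\nabla w_h, \nabla\phi) = (\mathcal I_h q\,\nabla w_h, \nabla(\phi - P_h\phi)) + ((\mathcal I_h q - q)\nabla u_h(q), \nabla P_h\phi),
\end{equation*}
using Galerkin orthogonality of $w_h$ against $P_h\phi \in X_h$ for the first split. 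Hmm — the first term is not obviously small because $\nabla w_h$ is only $L^2$; I would instead use the standard trick of testing the \emph{error equation} with $v = P_h\phi$ directly, writing $\|w_h\|_{L^2}^2 = ((\mathcal I_h q - q)\nabla u_h(q),\nabla P_h\phi)$, and then bounding this by $\|\mathcal I_h q - q\|_{L^\infty}\|\nabla u_h(q)\|_{L^2}\|\nabla P_h\phi\|_{L^2} \le c h^{2-d/2}\|q\|_{H^2}\,\|f\|_{L^2}\,\|w_h\|_{L^2}$. That only gives $h^{2-d/2}$ with the wrong norm on $f$; to recover the sharp $h^2\|f\|_{L^p}$ I would instead exploit the extra regularity $\nabla u_h(q) \in L^{p'}$-type bounds and pair $\|\mathcal I_h q - q\|_{L^r}$ with $r$ chosen so that the interpolation-error power is exactly $h^2$: specifically $\|\mathcal I_h q - q\|_{L^\infty(\Omega)} + h\|\mathcal I_h q - q\|_{W^{1,\infty}(\Omega)} \le ch^2\|q\|_{W^{2,\infty}}$ is \emph{not} available, so the clean route is a full Aubin--Nitsche where the $h^2$ comes half from the coefficient interpolation ($h$ in $W^{1,\cdot}$) and half from the projection error on $\phi$ ($h$ from $\|\phi - P_h\phi\|$). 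I would therefore keep the first split above, bound the first term by $\|\nabla w_h\|_{L^2}\|\nabla(\phi - P_h\phi)\|_{L^2} \le c h\|\nabla w_h\|_{L^2}\|w_h\|_{L^2}$ after controlling $\|\nabla w_h\|_{L^2}$ from the energy estimate of the error equation ($\|\nabla w_h\|_{L^2} \le c\|\mathcal I_h q - q\|_{L^\infty}\|\nabla u_h(q)\|_{L^2} \le c h^{2-d/2}\|f\|_{L^2}$, already $o(1)$), and bound the second term using $\|\mathcal I_h q - q\|_{L^\infty}\|\nabla u_h(q)\|_{L^{q'}}\|\nabla P_h\phi\|_{L^q}$ with the discrete $W^{1,q}$-stability of $P_h$ and the $L^p$ a priori bound $\|\nabla u_h(q)\|_{L^{p'}} \le c\|f\|_{L^p}$.

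The main obstacle I anticipate is keeping all finite element stability and regularity constants genuinely \emph{independent of $h$} despite the coefficient $\mathcal I_h q$ depending on $h$: one needs the uniform Lipschitz bound on $\mathcal I_h q$ (clear from $W^{1,\infty}$-stability of $\mathcal I_h$ and $q \in W^{1,\infty}$), uniform $H^2$ elliptic regularity for the dual problem (standard for Lipschitz coefficients on convex polyhedra), and the $W^{1,p}$-stability of $P_h$ on quasi-uniform meshes (this is where the condition $p \ge \max(d+\epsilon,2)$ and the quasi-uniformity assumption enter, as in \cite{DouglasDupontWahlbin:1974}). A secondary subtlety is the bound $\|\nabla u_h(q)\|_{L^{p'}(\Omega)} \le c\|f\|_{L^p(\Omega)}$ uniformly in $h$, which again follows from discrete elliptic $L^p$-estimates for the Lipschitz coefficient $q$; once these three uniform ingredients are in place, collecting terms gives $\|w_h\|_{L^2(\Omega)} \le c h^2 \|f\|_{L^p(\Omega)}$ and the lemma follows.
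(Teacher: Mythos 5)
Your overall strategy (energy estimate for $\nabla w_h$ plus an Aubin--Nitsche duality argument) is the same as the paper's, but the plan as written does not close: the consistency term in the duality identity, $((\mathcal{I}_h q-q)\nabla u_h,\nabla P_h\phi)$, is only $O(h)$ the way you bound it. Since $\|\nabla P_h\phi\|_{L^q(\Omega)}$ carries no power of $h$, the only smallness available in that term comes from $\mathcal{I}_h q-q$, and without $q\in W^{2,\infty}(\Omega)$ the $L^\infty$ interpolation error is at best $O(h)$ (your proposed $h^{2-d/2}\|q\|_{H^2(\Omega)}$ is even worse for $d=3$). Your remark that ``half the $h^2$ comes from the projection error on $\phi$'' applies to the Galerkin-orthogonality term containing $\nabla(\phi-P_h\phi)$, not to this one. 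The missing idea --- and the crux of the paper's proof --- is to split $\nabla u_h = \nabla\bigl[u_h - u(q)\bigr] + \nabla u(q)$ inside the consistency term and use two \emph{different} H\"older pairings: $\|\mathcal{I}_h q-q\|_{L^\infty(\Omega)}\le ch\|q\|_{W^{1,\infty}(\Omega)}$ against the $O(h)$ energy error $\|\nabla(u_h-u(q))\|_{L^2(\Omega)}$, and $\|\mathcal{I}_h q-q\|_{L^2(\Omega)}\le ch^2\|q\|_{H^2(\Omega)}$ against the a priori bound $\|\nabla u(q)\|_{L^\infty(\Omega)}\le c\|f\|_{L^p(\Omega)}$. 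This splitting is precisely where the two hypotheses you never really exploit enter: $q\in H^2(\Omega)$ (for the $L^2$ interpolation rate $h^2$) and $p\ge\max(d+\epsilon,2)$ (for the $W^{1,\infty}$ estimate of the \emph{continuous} solution $u(q)$, not a discrete $W^{1,p'}$ bound on $u_h(q)$, which with $p'$ the conjugate exponent is a weaker norm than $L^2$ and gains nothing).

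A secondary but real problem is your treatment of the energy estimate. You bound $\|\nabla w_h\|_{L^2(\Omega)}\le c\|\mathcal{I}_h q-q\|_{L^\infty(\Omega)}\|\nabla u_h\|_{L^2(\Omega)}$ with $\|\mathcal{I}_h q-q\|_{L^\infty(\Omega)}\le ch^{2-d/2}\|q\|_{H^2(\Omega)}$; combined with the $O(h)$ gain from $\nabla(\phi-P_h\phi)$ this yields $O(h^{3-d/2})$, which is $O(h^{3/2})$ in three dimensions and thus short of $h^2$. The fix is simply to use the hypothesis $q\in W^{1,\infty}(\Omega)$ and the estimate $\|\mathcal{I}_h q-q\|_{L^\infty(\Omega)}\le ch\|q\|_{W^{1,\infty}(\Omega)}$, giving $\|\nabla w_h\|_{L^2(\Omega)}\le ch\|f\|_{L^2(\Omega)}$ uniformly in $d$. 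Finally, your worry about $h$-uniform $H^2$ regularity for the dual problem with coefficient $\mathcal{I}_h q$ can be avoided entirely by posing the dual problem with the coefficient $q$ itself (as the paper does), at the cost of keeping $q$ rather than $\mathcal{I}_h q$ in the error equation --- the two formulations are interchangeable.
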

\begin{proof}
By the definitions of $u_h(\mathcal{I}_h q)$ and $u_h(q)$,
$w_h\equiv u_h(q)-u_h(\mathcal{I}_hq)$ satisfies
\begin{align}\label{eqn:err-01}
  {(q \nabla w_h,\nabla v)} &= ((\mathcal{I}_h q - q)\nabla u_h(\mathcal{I}_h q),\nabla v),\quad\forall v\in X_h.
 \end{align}
Since $q\geq c_0$, by the approximation property \eqref{eqn:int-err-inf}, we derive
\begin{align*}
  c_0 \|\nabla w_h\|_{L^2(\Omega)}^2 & \le (q\nabla w_h,\nabla w_h)  = ((\mathcal{I}_h q-q)\nabla u_h(\mathcal{I}_h q),\nabla w_h)\\
 &\le c_1 \|  \mathcal{I}_h q-q\|_{L^\infty(\Omega)} \| \nabla u_h(\mathcal{I}_h q)\|_{L^2(\Omega)}\| \nabla w_h\|_{L^2(\Omega)}\\
 &\le c h \| q \|_{W^{1,\infty}(\Omega)} \| f \|_{L^2(\Omega)} \| \nabla w_h\|_{L^2(\Omega)},
\end{align*}
i.e., $\|\nabla w_h\|_{L^2(\Omega)}\leq ch\|f\|_{L^2(\Omega)}$.
Next, we derive the $L^2(\Omega)$ estimate by using a duality argument. Let $\psi \in H^2(\Omega)\cap H_0^1(\Omega)$ solve
$(q \nabla v,\nabla \psi) = (v, w_h)$ for any $v\in H_0^1(\Omega)$. Meanwhile, we have
\begin{align*}
  \| w_h \|_{L^2(\Omega)}^2
  &=  (q\nabla w_h,\nabla \psi)
  = (q\nabla w_h,\nabla(\psi-\mathcal{I}_h\psi))
  +(q\nabla w_h,\nabla \mathcal{I}_h\psi)\\
  &\le  \|q\|_{L^\infty(\Omega)}\| \nabla w_h \|_{L^2(\Omega)}\|\nabla (\psi - \mathcal{I}_h\psi)\|_{L^2(\Omega)} + (q\nabla w_h,\nabla  \mathcal{I}_h \psi)\\
   & \le c h^2 \|  \psi \|_{H^2(\Omega)} \| f \|_{L^2(\Omega)} + (q\nabla w_h,\nabla\mathcal{I}_h \psi).
\end{align*}
Further, using \eqref{eqn:err-01} and the \textit{a priori} estimate
$\|u(q)\|_{W^{1,\infty}(\Omega)}\leq c\|f\|_{L^p(\Omega)}$ for any $p\geq\max(d+\epsilon,2)$
\cite[(2.2)]{LiSun:2017}, and the estimate $\|\nabla(u(q)-u_h(q))\|_{L^2(\Omega)}\leq ch$, we obtain
\begin{align*}
 &\quad (q \nabla w_h,\nabla  \mathcal{I}_h \psi)
  =  ((\mathcal{I}_h q- q)\nabla u_h(\mathcal{I}_h q),\nabla  \mathcal{I}_h \psi)\\
 & = ((\mathcal{I}_h q - q)\nabla [u_h(\mathcal{I}_h q) - u(q)],\nabla  \mathcal{I}_h \psi)
 + ((\mathcal{I}_h q - q)\nabla  u(q),\nabla  \mathcal{I}_h \psi)\\
 & \leq \| \mathcal{I}_hq-q\|_{L^\infty(\Omega)}\|\nabla(u_h(\mathcal{I}_hq)-u(q))\|_{L^2(\Omega)}
 \|\nabla\mathcal{I}_h\psi\|_{L^2(\Omega)}\\
 &\quad +  \|\mathcal{I}_hq - q\|_{L^2(\Omega)}\|\nabla u(q)\|_{L^\infty (\Omega)} \|\nabla  \mathcal{I}_h \psi\|_{L^2(\Omega)}\\
 &\le c h^2 \|\nabla  \mathcal{I}_h \psi\|_{L^2(\Omega)} \| f \|_{L^p(\Omega)} \le c  h^2 \|  \psi\|_{H^2(\Omega)} \| f \|_{L^p(\Omega)}.
\end{align*}
This and the regularity $\|\psi\|_{H^2(\Omega)}\leq c\|w_h\|_{L^2(\Omega)}$ lead to
\begin{equation*}
   \| u_h(q) - u_h(\mathcal{I}_h q) \|_{L^2(\Omega)} \le c h^2 \| f \|_{L^p(\Omega)},
\end{equation*}
for any $p \ge \max(d+\epsilon,2)$. This completes the proof of the lemma.
\end{proof}

\section{Proof of Lemma \ref{lem:err-parabolic}}

\begin{proof}
If $f\equiv0$ and $u_0\in H^2(\Omega)\cap H_0^1(\Omega)$, the estimate can be found in \cite[Theorem 3.1]{Thomee:2006}.
It suffices to analyze the case $u_0 = 0$ and $f\in W^{2,1}(0,T;L^2(\Omega))$. Let $A\equiv A(q^\dag):H_0^1\II \to H^{-1}(\Omega)$
by $(Av,\chi)=(q^\dag\nabla v,\nabla \chi)$ for all $\chi\in H_0^1\II$. Then $A$  generates a bounded analytic
semigroup $e^{-At}$ on $L^2\II$ and allows representing the solution $u(t)$ by
\begin{align*}
 u(t)  =  \int_0^t e^{-A(t-s)} f(s) \,\d s.
\end{align*}
Then it follows from integration by parts that
\begin{align*}
   & \|  \partial_t u(t) \|_{L^2\II} + \|  A u(t) \|_{L^2\II}  \le c \| f \|_{C([0,t];L^2\II)} + \int_0^t \|\partial_s f (s)\|_{L^2\II} \,\d s,\\
   &\|  A\partial_t u(t) \|_{L^2\II}   \le c \Big( t^{-1} \| f(0) \|_{L^2\II} + \| f'(t) \|_{L^2\II} + \int_0^t \|\partial_{s}^2f (s)\|_{L^2\II} \,\d s\Big).
\end{align*}
The second inequality and Assumption \ref{ass:data2} imply
\begin{equation*}
  \int_0^t s \|  A \partial_s u(s) \|_{L^2\II} \,\d s  \le c t .
\end{equation*}
Then by the regularity estimate \eqref{reg-parabolic-2} and the approximation property
\eqref{eqn:proj-L2-error}, we derive
\begin{align} \label{eqn:est-ap-01}
 \|  u(t) - P_h u(t) \|_{L^2\II}  \le c h^2 \|  u(t) \|_{H^2\II} \le c h^2 \| Au(t) \|_{L^2\II}.
  \end{align}
Let $u_h$ be the spatially semidiscrete Galerkin approximation, i.e., $\partial_t u_h+ A_h
u_h  = P_hf$ with $u_h(0) = 0$ and $A_h \equiv A_h(q^\dag)$, cf. \eqref{eqn:Ah}.
Then the difference $\zeta(t) = u_h(t) - P_h u(t)$ satisfies
\begin{equation*}
   \partial_t \zeta (t)+ A_h  \zeta(t)= A_h(R_h - P_h) u(t),
\end{equation*}
with $\zeta(0)=0$, where $R_h:H_0^1(\Omega)\rightarrow X_h$ denotes the Ritz projection (associated with $q^\dag$).  Then  \eqref{eqn:proj-L2-error} and
the approximation property of $R_h$ \cite[Lemma 1.1]{Thomee:2006} lead to
\begin{align*}
 t  \zeta(t) &  = t \int_0^t e^{-A_h (t-s)} A_h(R_h-P_h) u(s) \,\d s \\
 & = \int_0^t (t-s)e^{-A_h (t-s)} A_h(R_h-P_h) u(s) \,\d s + \int_0^t  e^{-A_h(t-s)} A_h(R_h-P_h) su(s) \,\d s\\
 & =: {\rm I}_1 + {\rm I}_2.
 \end{align*}
Since $ \| e^{-A_h t} A_h \|_{L^2\II\rightarrow L^2\II}\leq c t^{-1} $, we deduce
\begin{align*}
 \| {\rm I}_1 \|_{L^2\II}\le  \int_0^t  \|(R_h-P_h) u(s)\|_{L^2\II} \,\d s \le  ch^2 \int_0^t  \| Au(s) \|_{L^2(\Omega)} \,\d s \le  c t h^2.
\end{align*}
Similarly, integration by parts allows bounding ${\rm I}_2$ by
\begin{align*}
 \| {\rm I}_2 \|_{L^2\II} & \le  c t\| (R_h -P_h) u(t)\|_{L^2\II} + c\int_0^t \| (R_h - P_h) \partial_s (su(s)) \|_{L^2\II} \,\d s\\
 &\le c t h^2  \| A u \|_{C([0,t];L^2\II)}   + ch^2 \int_0^t s \| A\partial_s(s u(s)) \|_{L^2} \, \d s \le c t h^2.
\end{align*}
The preceding two estimates yield $\| \zeta(t) \|_{L^2(\Omega)} \le c h^2$.
This, \eqref{eqn:est-ap-01}  and the triangle inequality imply
$ \|  u_h(t) - u(t) \|_{L^2\II} \le c h^2 .$
Meanwhile, repeating the argument in \cite[Lemma 4.2]{JinLizhou:nonlinear} yields
$$ \|  u_h(t_n) - U_h^n(q^\dag)  \|_{L^2\II} \le c \tau \Big( \|  f(0) \|_{L^2\II} + \int_0^{t_n} \| \partial_s f(s) \|_{L^2(\Omega)} \,\d s\Big) \le c\tau.  $$
Then the desired assertion follows immediately by the triangle inequality.
\end{proof}

\bibliographystyle{siam}

\end{document}